\documentclass[12pt]{amsart}

\usepackage{amssymb}
\usepackage{amsmath}
\usepackage{amsthm}
\usepackage{enumerate}

\usepackage{xcolor}

\usepackage{tikz}
\usepackage[T1]{fontenc}
\usetikzlibrary{patterns}
\tikzset{
	convexset/.style = {line width = 0.5 pt, fill=lightgray, opacity=0.3},
	ext/.style = {circle, inner sep=0pt, minimum size=2pt, fill=black},
	segment/.style = {line width = 0.75 pt}
}

\theoremstyle{plain}

\newtheorem{theorem}{Theorem}[section]

\newtheorem{lemma}[theorem]{Lemma}
\newtheorem{proposition}[theorem]{Proposition}
\newtheorem{corollary}[theorem]{Corollary}

\newtheorem{observation}[theorem]{Observation}
\newtheorem{fact}[theorem]{Fact}

\theoremstyle{definition}

\newtheorem{definition}[theorem]{Definition}
\newtheorem{remark}[theorem]{Remark}

\newtheorem{notation}[theorem]{Notation}

\newcommand{\R}{\mathbb{R}}
\newcommand{\N}{\mathbb{N}}

\newcommand{\inte}{\mathrm{int}\:}

\renewcommand{\epsilon}{\varepsilon}
\renewcommand{\phi}{\varphi}
\renewcommand{\tilde}{\widetilde}

\author[C.A.~De~Bernardi]{Carlo Alberto De Bernardi}

\address{Dipartimento di Matematica per le Scienze economiche, finanziarie ed attuariali, Universit\`a Cattolica del Sacro Cuore, 20123 Milano,Italy}

\email{carloalberto.debernardi@unicatt.it}
\email{carloalberto.debernardi@gmail.com}

\author[L.~Vesel\'y]{Libor Vesel\'y}
\address{Dipartimento di Matematica\\
	Universit\`a degli Studi\\
	Via C.~Saldini 50\\
	20133 Milano\\
	Italy}

\email{libor.vesely@unimi.it}

 \subjclass[2010]{Primary 26B25, 52A05; Secondary 46A55, 52A99}

\keywords{Quasiconvex function, extension, uniformly convex set, normed space}

\thanks{}
\title{Extendability of quasiconvex functions\\ in finite-dimensional normed spaces}

\begin{document}

\begin{abstract}
Let $X$ be a normed space of a finite dimension at least two, and $C\subsetneq X$ a closed convex set
with nonempty interior. We are interested in extending Lipschitz quasiconvex functions on
$C$ to quasiconvex functions on $X$. We show that, unlike what holds for convex functions,
in general one cannot obtain Lipschitz extensions (except for trivial cases). If we require
just uniformly continuous or continuous extensions, such extendability properties
for $C$ are shown to be characterized by some geometric properties of $C$.
\end{abstract}

\maketitle

%%%%%%%%%%%%%%%%%%%%%%%%%%%%%%%%%%%%%%%%%%%%%%%%%%%%%%

%%%%%%%%%%%%%%%%%% Headings
\markboth{C.A.~De Bernardi and L.~Vesel\'y}{Extending quasiconvex functions from uniformly convex sets}
%%%%%%%%%%%%%%%%%%%%%%%%%%%

\section{Introduction}

Let $C$ be a convex set in a real vector space. A function
$f\colon C\to \R$  
is said to be {\em quasiconvex} (QC, for short)
if all its sub-level sets $\{x\in C: f(x)\le\alpha\}$ 
[equivalently, all its strict sub-level sets $\{x\in C: f(x)<\alpha\}$] ($\alpha\in\R$)
are convex. QC functions represent a natural generalization of convex functions and
play a crucial role in Optimization, in Mathematical programming, in Mathematical economics,  and in many other areas 
of mathematical analysis  (see \cite{BFitzV,gencon,pierskalla,penot} and the references therein).

In the present paper and in our two previous papers \cite{DEVEqc_examples,DEVEqc_UC}, we are interested in extendability of Lipschitz or (uniformly) continuous 
QC functions defined on a convex body in a real normed space $X$ of dimension at least two. By a {\em convex body}
we mean a (possibly unbounded) closed convex set with nonempty interior which is not the whole space.
Our motivation comes from the following well-known extension result valid for {\em convex} functions.
(Its metric-space version, without any convexity, is due to E.J.~McShane \cite{McShane}, 
while the ``convex version''
belongs to the mathematical folklore.)

\begin{theorem}\label{T:McS}
Let $C\subset X$ be an arbitrary nonempty convex set, and let $f\colon C\to\R$ be a convex function
which is $L$-Lipschitz (that is, 
Lipschitz with a Lipschitz constant $L\ge0$). Then the formula
$F(x)=\inf_{u\in C}\bigl\{ f(u)+L\|x-u\| \bigr\}$ defines an $L$-Lipschitz convex
extension of $f$ to the whole $X$.
\end{theorem}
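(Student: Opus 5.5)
We verify, in order, that $F$ is finite, extends $f$, is $L$-Lipschitz, and is convex. Each step uses only the Lipschitz property of $f$, convexity of $C$ and $f$, and the triangle inequality.

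\emph{Finiteness and extension.} Fix $u_0\in C$. For every $u\in C$ and $x\in X$, the Lipschitz property gives
\[
f(u)+L\|x-u\|\ \ge\ f(u_0)-L\|u-u_0\|+L\|x-u\|\ \ge\ f(u_0)-L\|x-u_0\|.
\]
Hence $F(x)>-\infty$. Taking $u=u_0$ shows $F(x)<\infty$. If $x\in C$, then $u=x$ gives $F(x)\le f(x)$. Applying the same chain with $u_0=x$ gives $F(x)\ge f(x)$, so $F=f$ on $C$.

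\emph{Lipschitz property.} For each fixed $u$, the map $x\mapsto f(u)+L\|x-u\|$ is $L$-Lipschitz. An infimum of a family of $L$-Lipschitz functions that is finite everywhere is $L$-Lipschitz: from $f(u)+L\|x-u\|\le f(u)+L\|y-u\|+L\|x-y\|$ we get, after taking infima over $u$, that $F(x)\le F(y)+L\|x-y\|$. Exchanging $x$ and $y$ gives the bound.

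\emph{Convexity.} This is the only step needing care, because an infimum of convex functions need not be convex. We use the joint convexity of $(x,u)\mapsto f(u)+L\|x-u\|$ on $X\times C$, i.e.\ the infimal convolution structure. Let $x,y\in X$, $t\in[0,1]$ and $\epsilon>0$. Choose $u,v\in C$ with
\[
f(u)+L\|x-u\|<F(x)+\epsilon,\qquad f(v)+L\|y-v\|<F(y)+\epsilon .
\]
Put $w=tu+(1-t)v$, which lies in $C$ by convexity of $C$. Convexity of $f$ and of the norm give
\[
F(tx+(1-t)y)\le f(w)+L\|tx+(1-t)y-w\|\le t\bigl(f(u)+L\|x-u\|\bigr)+(1-t)\bigl(f(v)+L\|y-v\|\bigr).
\]
The right-hand side is less than $tF(x)+(1-t)F(y)+\epsilon$. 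Letting $\epsilon\to0$ proves that $F$ is convex, which completes the proof.
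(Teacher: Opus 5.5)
Your proof is correct and complete. The finiteness, extension and Lipschitz steps are McShane's argument and use no convexity. The convexity step is the standard observation that $F$ is the infimal convolution of the convex function $f$ (set to $+\infty$ off $C$) with $L\|\cdot\|$, which you verify directly using the near-minimizers $u,v$ and $w=tu+(1-t)v\in C$.

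The paper gives no proof of its own: it credits McShane for the metric version and calls the convex version folklore. Your argument is exactly that standard folklore proof.
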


It is natural to ask whether some analogous result holds for QC functions. It turns out
that the ``quasiconvex'' situation is completely different and depends heavily on
geometric properties of the convex set $C$. Here we mainly consider the case in which $C$ is a convex body. 
In the following theorem, we collect our main previous results, contained in \cite{DEVEqc_examples,DEVEqc_UC}. The natural definitions
of a UR (uniformly rotund) body and an LUR (locally uniformly rotund) body can be found
in the cited papers.

\begin{theorem}\label{T:old}
Let $X$ be a real normed space of dimension at least two.
\begin{enumerate}[(a)]
\item There exists a Lipschitz QC function $f$ on the closed unit disc $D\subset\R^2$
which does not admit any Lipschitz QC extension to the whole $\R^2$ 
{\em (see \cite[Example~4.2]{DEVEqc_examples}).}
Consequently,%
\footnote{Indeed, one can consider $X$ in the form of the direct sum $X=\R^2\oplus_\infty Y$, and take $C=B_X$, the closed unit
ball of such $X$. Let $f_1\colon B_{\R^2}\equiv D\to\R$ be a Lipschitz quasiconvex
function which
cannot be extended to a Lipschitz QC function on $\R^2$. Then
the function $f\colon C\to\R$, given by $f(x)=f_1(Px)$ where $P$ is the canonical projection
of $X$ onto $\R^2$, is the desired example.}
$X$ contains a convex body $C$ such that
some Lipschitz QC function on $C$ cannot be extended to a Lipschitz QC function
on the whole $X$.
\item If $X$ is a Banach space and $C\subset X$ is a convex body which is not LUR, there exists a 
Lipschitz QC function
$f\colon C\to\R$ which does not admit any continuous QC extension to the whole $X$ 
{\em (see \cite[Theorem~3.3]{DEVEqc_examples})}.
\item If $C\subset X$ is a UR convex body, then each uniformly continuous QC function
$f\colon C\to\R$ admits a uniformly continuous QC extension to $X$ 
{\em (see \cite[Theorem~5.5]{DEVEqc_UC}).}
\end{enumerate}
\end{theorem}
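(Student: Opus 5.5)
Theorem~\ref{T:old} collects results proved in \cite{DEVEqc_examples,DEVEqc_UC}. I would prove the three parts separately, all by the same device: a QC extension $F$ of $f$ must satisfy $\{F\le t\}\cap C=\{f\le t\}$, and its sublevel sets $\{F\le t\}$ must be convex. Hence the geometry of how the sets $S_t:=\{f\le t\}$ meet $\partial C$ controls which extensions can exist.

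For (a), I would first build the planar example on the disc $D$. Fix a boundary point $p\in\partial D$. Choose a family of chords of $D$ accumulating at $p$, and let $f$ be a Lipschitz function whose sublevel sets $S_t$ are the caps cut off by these chords, with slopes arranged so that $f$ is Lipschitz on $D$. If $F$ is a QC extension, convexity of $\{F\le t\}$ and $\{F\le t\}\cap D=S_t$ force $\{F\le t\}$ to stay on one side of a line through the endpoints of the $t$-chord. So $F\ge t$ on a wedge outside $D$ bounded by the extension of that chord. Because the chords near $p$ turn through different angles, wedges belonging to levels $t<s$ come arbitrarily close to each other outside $D$, at distances $o(s-t)$. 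This forces $\mathrm{Lip}(F)=\infty$. Choosing the chord angles so that the forced ratio is unbounded while $f$ itself stays Lipschitz is the delicate computation. The passage to a general $X$ is the footnote argument. First, $\dim X<\infty$ gives $X$ isomorphic to $\R^2\oplus_\infty Y$, and Lipschitz continuity and convexity are preserved by this isomorphism. Second, if $F$ were a Lipschitz QC extension of $f=f_1\circ P$, then $F|_{\R^2\oplus\{0\}}$ would be a Lipschitz QC extension of $f_1$, since $B_X\cap(\R^2\oplus\{0\})=D$.

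For (b), I would start from a point $x_0\in\partial C$ where $C$ fails to be LUR. This gives $x_n\in C$ and $\epsilon>0$ with $\|x_n-x_0\|\ge\epsilon$ and $\mathrm{dist}\bigl(\tfrac{x_n+x_0}{2},X\setminus C\bigr)\to0$. I would construct a Lipschitz QC $f$ whose sublevel sets separate $x_0$ from the $x_n$, using a series of functions whose convergence uses completeness. Then any convex $\{F\le t\}$ containing $x_n$ and reaching across near $\tfrac{x_n+x_0}2$ would have to pick up points arbitrarily close to $x_0$ at a level bounded away from $f(x_0)$. That contradicts continuity of $F$ at $x_0$ from outside $C$.

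For (c), the positive result, I would define $F(x)=\inf\{t: x\in E_t\}$, where $E_t$ is the largest closed convex set with $E_t\cap C=S_t$. I would realize $E_t$ as the intersection of $S_t$-containing closed half-spaces and $X\setminus\bigl(\text{cones from } C\setminus S_t\bigr)$. The family $(E_t)$ is increasing and convex, so $F$ is QC, and $F|_C=f$. The main obstacle is uniform continuity of $F$. Here I would use the modulus of uniform rotundity $\delta_C$: if $x\notin C$ lies near $\partial C$, every supporting structure of $E_t$ near $x$ meets $C$ only within distance $\delta_C^{-1}(\mathrm{dist}(x,C))$ of its contact point. Combined with the modulus of continuity $\omega_f$, this should give $\omega_F(r)\le\omega_f\bigl(c\,\delta_C^{-1}(r)\bigr)+\omega_f(cr)$. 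Controlling this estimate uniformly, including far from $C$, is where I expect the real work to lie.
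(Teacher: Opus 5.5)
You establish only the part of the statement that the paper actually proves; beyond that your proposal has genuine gaps. The paper does not reprove (a)--(c): they are quoted from \cite{DEVEqc_examples,DEVEqc_UC}, and the only argument given is the footnote for the ``consequently'' part of (a). Your handling of that part follows the footnote and usefully adds the check it omits: restrict $F$ to $\R^2\oplus\{0\}$, where $B_X\cap(\R^2\oplus\{0\})=D\oplus\{0\}$. But you should drop the assumption $\dim X<\infty$. The theorem concerns arbitrary normed spaces (note the Banach hypothesis in (b)), and the decomposition exists in general: a two-dimensional subspace $E\cong\R^2$ has a bounded projection $Q$, and $\max\{|Qx|_2,\|x-Qx\|\}$ is an equivalent norm making $X=\R^2\oplus_\infty\ker Q$.

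In your planar sketch the decisive quantity is misstated. Two forced regions (where $F>t$, resp.\ $F>s$) being close proves nothing, since $F$ may be large on both. What you need is a point of $S_t$, where $F\le t$, lying within $o(s-t)$ of the open half-plane beyond the level-$s$ line, while $d(S_t,D\setminus S_s)\gtrsim s-t$ keeps $f$ Lipschitz. That balance is the entire content of the example, and you leave it undone. Theorem~\ref{no_lipQC}, applied to $E=D$, carries it out: it uses $2^k\delta(\epsilon\,2^{-k})\to0$ for $\delta(z)=\frac{g(z)}2-g(\frac z2)$, and the cut $u\ge\frac34\epsilon\,2^{-k}$ for the separation.

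For (b) and (c) you give outlines rather than proofs, and the outline of (b) runs the wrong way. Convexity of $\{F\le t\}$ together with $\{F\le t\}\cap C=\{f\le t\}$ only produces exclusions. These are lower bounds for $F$ at points off $C$ lying beyond hyperplanes that extend pieces of $\partial[f<t]$ inside $\mathrm{int}\,C$; convexity never makes $\{F\le t\}$ ``pick up'' points. The contradiction has to be that $F$ is forced above some $t>f(x_0)$ at points $z_n\notin C$ with $z_n\to x_0$, as in the proof of Theorem~\ref{T:nonR}. In the non-LUR case, the points $x_n$ must be used to build such hyperplane pieces near the almost flat spots $\tfrac{x_0+x_n}2$, passing within $o(1)$ of $x_0$ from outside $C$. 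You construct no such $f$, and the role of completeness is only guessed.

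In (c), three steps are missing.
\begin{itemize}
\item ``The largest closed convex set with $E_t\cap C=S_t$'' need not exist: take $S_t=\emptyset$, or $S_t$ a single corner point of a rotund $C$. Work instead with $B_\alpha=\overline{[f<\alpha]_C}$, which is empty or a convex body, and with $e(B_\alpha)$ as in Definition~\ref{D:e}.
\item The monotonicity of $t\mapsto E_t$, which you take for granted, is a real lemma; cf.\ Lemma~\ref{L:5.3.bis} and Proposition~\ref{P:solocontenimento}.
\item Uniform continuity, which you explicitly leave open, needs a lower bound for $d\bigl(e(B_\alpha),X\setminus e(B_\beta)\bigr)$ in terms of the UR modulus and $d(B_\alpha,C\setminus B_\beta)$. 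This is the mechanism of Proposition~\ref{P:contenimento}, made uniform by UR, and it must then be turned into a modulus for $F$ via \cite[Proposition~2.5]{DEVEqc_UC}.
\end{itemize}
The honest route, as in the paper, is to cite the three results; for the planar case of (a), Theorem~\ref{no_lipQC} suffices.
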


It remains an {\em open problem} whether the UR property of a convex body $C$ 
is also necessary for extendability
of each uniformly continuous QC function on $C$ to a uniformly continuous QC function
on the whole $X$. In other words, we don't know what happens for convex bodies which are 
LUR but not UR.

However, in the finite-dimensional case,  we obtain exact characterizations of various types
of extendability in terms of geometric properties of $C$.
And these results are the subject of the present paper.

The paper is organized as follows. After a brief section of preliminaries, the next two Sections~3 and~4
provide three (finite-dimensional) constructions of Lipschitz QC functions that cannot be extended to
sufficiently good QC functions defined on the whole space (Theorems~\ref{noQC}, \ref{T:nonR},
and~\ref{no_ucQC}). In Section~5, we show that on every convex body in an arbitrary normed space of dimension at least 2
there exists a Lipschitz QC function which has no Lipschitz QC extension to the whole space 
(Theorem~\ref{no_lipQC}). Section~6 contains some technical results concerning
convex bodies with no asymptotic directions, which will be used
in Section~7 to prove Theorem~\ref{T:ext_contQC} on extendability of QC functions with preserving
continuity. The subsequent Section~8 concerns extendability of upper semicontinuous QC functions.
Finally, in the last Section~9 we combine results from previous sections and some
our earlier results to obtain several full
(finite-dimensional) characterizations of extendability
of QC functions from a closed convex set, in terms of geometric properties of the latter.

%%%%%%%%%%%%%%%%%%%%%%%%%%%%%%
\medskip

\section{Preliminaries}

Let $\R_+=[0,+\infty)$.
Throughout the paper, if not specified otherwise, $X$ denotes a (real) normed linear space of {\em finite} dimension $d\ge2$. 
By $B_X$, $U_X$ and $S_X$ we denote the closed unit ball, the open unit ball and the unit sphere
of $X$, respectively. For $x\in X$ and $r>0$ we denote $B(x,r):=x+r B_X$ (the closed ball
of radius $r$ around $x$).

Given two sets $A,B\subset X$, their {\em distance} is the quantity
$d(A,B)=\inf\{\|a-b\|: a\in A, b\in B\}$, with the usual convention that $\inf\emptyset=+\infty$.
The distance of a point $x\in X$ from $A$ is defined as $d(x,A):=d(\{x\},A)$. 
If $A\subset B\subset X$, we denote
by $\mathrm{int}_B A$, $\partial_B A$ and $\overline{A}^B$ the relative interior of $A$ in $B$,
the relative boundary of $A$ in $B$ and the relative closure of $A$ in $B$, respectively.
In the particular case of $B=X$, the subscript or superscript $X$ will be omitted. If not stated otherwise, 
$\R^d$ ($d\in\N$) is endowed with the euclidean norm. 

We shall use the following standard fact about convex sets.

\begin{fact}\label{F:convex}
Let $C$ be a convex set with nonempty interior in an arbitrary normed space $X$.
Then $\mathrm{int}\,C=\mathrm{int}\,\overline{C}$ and $\overline{C}=\overline{\mathrm{int}\,C}$.
\end{fact}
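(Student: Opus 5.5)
The argument rests on one basic tool: if $x\in\overline{C}$ and $y\in\mathrm{int}\,C$, then every point of the half-open segment $[y,x)$ lies in $\mathrm{int}\,C$. To see this, take $z=(1-t)y+tx$ with $0\le t<1$, and choose $r>0$ with $B(y,r)\subset C$. Pick $x'\in C$ with $\|x-x'\|$ small. Then
\[
z=(1-t)\Bigl(y+\tfrac{t}{1-t}(x-x')\Bigr)+t x' .
\]
Once $\|x-x'\|$ is small enough, the point $y+\tfrac{t}{1-t}(x-x')$ lies in $\mathrm{int}\,B(y,r)$. By convexity of $C$, $z$ therefore belongs to the set $(1-t)\,\mathrm{int}B(y,r)+tx'$. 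This set is open, because $1-t>0$, and it is contained in $C$. Hence $z\in\mathrm{int}\,C$.

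For the second identity, the inclusion $\overline{\mathrm{int}\,C}\subset\overline{C}$ is trivial. Conversely, fix $y\in\mathrm{int}\,C$, which exists by hypothesis, and let $x\in\overline{C}$. Applying the tool with $t\to1^-$ produces points of $\mathrm{int}\,C$ converging to $x$, so $x\in\overline{\mathrm{int}\,C}$.

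For the first identity, $\mathrm{int}\,C\subset\mathrm{int}\,\overline{C}$ is trivial. Conversely, let $x\in\mathrm{int}\,\overline{C}$ and fix $y\in\mathrm{int}\,C$; we may assume $x\ne y$. Since $x$ is interior to $\overline{C}$, the point $w=x+\epsilon(x-y)$ lies in $\overline{C}$ for some small $\epsilon>0$. We can then write
\[
x=\tfrac{1}{1+\epsilon}\,w+\tfrac{\epsilon}{1+\epsilon}\,y ,
\]
so $x$ lies on the half-open segment $[y,w)$ with $w\in\overline{C}$. By the tool, $x\in\mathrm{int}\,C$.

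The only step requiring real care is the segment lemma itself, specifically the approximation of $x\in\overline{C}$ by some $x'\in C$ while keeping the shifted ball inside $B(y,r)$. No finite-dimensionality is used anywhere, consistent with the statement holding in an arbitrary normed space.
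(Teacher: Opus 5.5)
Your proof is correct and complete: the segment (accessibility) lemma, that $[y,x)\subset\mathrm{int}\,C$ whenever $y\in\mathrm{int}\,C$ and $x\in\overline{C}$, is verified properly, and both identities follow from it exactly as you describe, with no finite-dimensionality used. The paper gives no proof of this fact and simply quotes it as standard; your argument is the standard one it implicitly relies on.
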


Further, we are going to use the following {\em simplified notation}. If $f$ is a real-valued function
defined (at least) on a set $E$, we denote, for instance,
$$
[f<\alpha]_E:=\{x\in E: f(x)<\alpha\}\quad\text{for $\alpha\in\R$.}
$$
In this notation, if the set $E$ is convex, $f$ is a {\em quasiconvex function}
(QC function, for short) on $E$ if and only 
if all such sets are convex.

By
a {\em convex body} in $X$ we mean a closed, convex, proper subset $C\subset X$ 
with nonempty interior (the same definition applies also in the infinite-dimensional case). 
A convex body is said to be {\em rotund} (or strictly convex) if 
its boundary contains no nontrivial line segments; equivalently, if the midpoint of two
distinct elements of $\partial C$ (of $C$, equivalently) is always an interior point of $C$.

The {\em recession cone} of a nonempty convex set
$C\subset X$ is the set
$$
\mathrm{recc}(C)=\{v\in X: C+\R^+v=C\}=\{v\in X:\; c+\R^+v\subset C\;\text{for some $c\in C$}\}.
$$
For a nonempty closed convex set $C$ it is well known (see \cite{Rock}) that:
\begin{itemize}
\item $\mathrm{recc}(C)$ is a closed convex cone with vertex at the origin;
\item $C$ is bounded if and only if $\mathrm{recc}(C)=\{0\}$;
\item if $C$ has nonempty interior then $\mathrm{recc}(C)=\mathrm{recc}(\mathrm{int}\,C)$.
\end{itemize}

\begin{definition}
Let $C\subsetneq X$ be a closed convex set, and $v\in X\setminus\{0\}$. We shall say that
$v$ is an {\em asymptotic direction} for $C$ if there exists $x_0\in X$ such that
\begin{equation}\label{D:ad}
(x_0+\R^+ v)\cap(\mathrm{int}\,C)=\emptyset
\quad\text{and}\quad \lim_{t\to+\infty}d(x_0+tv,C)=0.
\end{equation}
\end{definition}

\begin{observation}
Let $v$ be an asymptotic direction of a (necessarily unbounded) 
closed convex set $C\subsetneq X$, and let $x_0\in X$ be as in \eqref{D:ad}. 
Then:
\begin{enumerate}[(a)]
\item $v\in\mathrm{recc}(C)$;
\item $(x_0+\R v)\cap(\mathrm{int}\,C)=\emptyset$;
\item either $\partial C$ contains a half-line of the form $c_0+\R_+v$ or
$(x_0+\R v)\cap C=\emptyset$.
\end{enumerate}
\end{observation}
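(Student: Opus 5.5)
Throughout, write $x_t:=x_0+tv$ and $\ell:=x_0+\R v$.

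For (a), I would pick points $c_t\in C$ with $\|x_t-c_t\|=d(x_t,C)+1/t$, so that $\|x_t-c_t\|\to0$. Fix any $c\in C$. By convexity, $[c,c_t]\subset C$, and I look at the direction $(c_t-c)/t$. We have
$$
\frac{c_t-c}{t}=v+\frac{x_0-c}{t}+\frac{c_t-x_t}{t}\longrightarrow v .
$$
Fix $s>0$. For large $t$, the point $c+s(c_t-c)/t$ lies on $[c,c_t]$, hence in $C$, and it tends to $c+sv$. Since $C$ is closed, $c+sv\in C$. Thus $c+\R_+v\subset C$, i.e.\ $v\in\mathrm{recc}(C)$. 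This is the standard recession argument and needs only closedness and convexity.

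For (b), I would argue by contradiction. Suppose $x_0+sv\in\mathrm{int}\,C$ for some $s\in\R$. By \eqref{D:ad} we must have $s<0$. Since $v\in\mathrm{recc}(C)=\mathrm{recc}(\mathrm{int}\,C)$ by (a) and the listed fact, $\mathrm{int}\,C+\R_+v\subset\mathrm{int}\,C$. Hence $x_0+s'v\in\mathrm{int}\,C$ for every $s'\ge s$, in particular for $s'\ge0$. This contradicts the first condition in \eqref{D:ad}.

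For (c), suppose $\ell\cap C\ne\emptyset$ and pick $c_0\in\ell\cap C$. By (a), $c_0+\R_+v\subset C$. By (b), this half-line misses $\mathrm{int}\,C$. Hence $c_0+\R_+v\subset C\setminus\mathrm{int}\,C=\partial C$, using that $C$ is closed. So either such a half-line lies in $\partial C$ or $\ell\cap C=\emptyset$.

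The only mildly delicate step is (b). There one needs the fact that the recession cone of $\mathrm{int}\,C$ coincides with that of $C$, so that a point of $\mathrm{int}\,C$ on the line propagates forward along $v$ inside $\mathrm{int}\,C$. This is exactly the third bullet preceding the definition.
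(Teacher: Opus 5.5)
Your proof is correct and follows the paper's route for (a). The paper takes exact nearest points $c_t$, notes $\frac1t c_t\to v$, and cites Rockafellar's Theorem~8.2; you inline that recession-cone argument using approximate nearest points, which also removes any reliance on finite dimensionality. Your arguments for (b) and (c), which the paper omits as easy, are also correct.
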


\begin{proof}
We omit the easy proof of (b) and (c). Let us show (a).
For each $t>0$, let $c_t\in C$ be such that $\|x_0+tv-c_t\|=d(x_0+tv,C)$. 
Since $c_t=tv+x_0+(c_t-x_0-tv)$, we have $\frac1t c_t\to v$ as $t\to+\infty$.
By \cite[Theorem~8.2]{Rock}, $v\in\mathrm{recc}(C)$.
\end{proof}

A convex set $A\subset X$ will be called {\em relatively open} if it is open in its affine hull.

\medskip

\section{Two elementary negative results}

The aim of the present section is to provide two non-extendability results. These results are
quite elementary.

\begin{theorem}\label{noQC}
Let $\mathrm{dim}\,X=d\ge2$, and let $C\subset X$ be an unbounded convex body
that admits an asymptotic direction. Then there exists a Lipschitz QC function
$f\colon C\to\R$ such that $f$ admits no QC (whatsoever) extension to the whole
$X$.
\end{theorem}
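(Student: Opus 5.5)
The plan is to build $f$ as a monotone transform of an ``angle'' function coming from a pencil of hyperplanes whose common axis lies outside $C$. Every sub-level set is then $C$ intersected with a half-space, hence convex. The transform is chosen so that $f$ is unbounded along the asymptotic direction while remaining Lipschitz. The contradiction will be that any QC extension $F$ must take the value $+\infty$ at a suitable point outside $C$.

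\emph{Setup.} Let $v$ and $x_0$ be as in \eqref{D:ad}. By part (b) of the Observation, the line $x_0+\R v$ misses $\mathrm{int}\,C$. Separation then gives a nonzero affine function $h$ with $h\ge0$ on $C$ and $h=0$ on $x_0+\R v$, so $h(v)=0$. Choose a linear functional $\psi$ with $\psi(v)=1$. Since $d(x_0+tv,C)\to0$, we can pick $c_t\in\mathrm{int}\,C$ with $\|c_t-(x_0+tv)\|\to0$. Then $h(c_t)\to0$ and $\psi(c_t)\to+\infty$.

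\emph{Definition of $f$.} On $C$ we have $1+h\ge1$. Let $\theta(x)\in(-\pi/2,\pi/2)$ be the argument of the planar vector $(1+h(x),\psi(x))$, i.e.\ $\tan\theta=\psi/(1+h)$. Put
\[
f(c)=-\log\bigl(\tfrac{\pi}{2}-\theta(c)\bigr),\qquad c\in C.
\]
Quasiconvexity: $[f\le\beta]_C=C\cap\{\psi\le\tan(\gamma)\,(1+h)\}$ for the appropriate $\gamma<\pi/2$, which is $C$ intersected with a half-space.

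Unboundedness: $\theta(c_t)\to\pi/2$, so $f(c_t)\to+\infty$.

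Lipschitz property (the step I expect to need the most care). Write $\rho=\|(1+h,\psi)\|$. Then $|\nabla\theta|\lesssim1/\rho$, and $\tfrac{\pi}{2}-\theta\gtrsim(1+h)/\rho$. The derivative of $-\log(\pi/2-\theta)$ is $1/(\pi/2-\theta)$, so $|\nabla f|\lesssim\rho/\rho\cdot\frac1{1+h}\le 1$ up to constants depending on $\|h\|,\|\psi\|$. The only delicate region is where $\psi\le0$. There $\pi/2-\theta$ is bounded below and $\rho\ge1$, so the estimate is immediate. I would make these bounds precise with the elementary inequality $\arctan s\ge s/(1+s)$ for $s\ge0$.

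\emph{Non-extendability.} Suppose $F$ is a QC extension of $f$. Pick $p\in X$ with $1+h(p)<0$, so $p\notin C$ and $\theta(p)$, extended by continuity of the argument, lies in $(\pi/2,\pi)$. Along any line the function $\theta$ is monotone, because lines cross each hyperplane of the pencil at most once. On the ray from $p$ through $c_t$, the angle $\theta$ decreases from $\theta(p)$, through $\pi/2$ at the hyperplane $\{1+h=0\}$ (which is outside $C$), down to $\theta(c_t)$, and keeps decreasing beyond $c_t$. Since $c_t\in\mathrm{int}\,C$, there is $w_t\in C$ on this ray just beyond $c_t$ with $f(w_t)<f(c_t)$. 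As $c_t\in(p,w_t)$, quasiconvexity of $F$ gives
\[
f(c_t)=F(c_t)\le\max\{F(p),f(w_t)\},
\]
hence $F(p)\ge f(c_t)$ for every $t$. Letting $t\to\infty$ yields $F(p)=+\infty$, a contradiction. So no real-valued QC extension exists.
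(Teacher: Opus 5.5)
Your proof is correct, and it takes a genuinely different route from the paper's.

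\textbf{The paper's route.} The paper identifies $X$ with $\R\times\R\times\R^{d-2}$ so that $C\subset\{s\le 1\}$ and $d((t,1,0),C)\to 0$. In the half-plane $A=\{s<1\}$ it builds a nested sequence of regions $B_n$ lying under lines through $(0,1+\epsilon_n)$ and $(b_n,1)$. The points $b_n\to\infty$ are chosen inductively so that $A\cap 2B_n\subset B_{n+1}$. The Lipschitz QC function $g$ with $[g<\alpha_n]_A=B_n$ and $\alpha_n\nearrow+\infty$ is not written down; it is supplied by Lemma~2.9 of the authors' earlier paper, and then $f=g\circ P$. Non-extendability is argued through sublevel sets. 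Since $[f<\alpha_n]$ has a flat boundary piece inside $\mathrm{int}\,C$, the convex set $[F<\alpha_n]_X$ must lie in the corresponding open half-space. That half-space excludes the fixed point $(0,1+\epsilon_1,0)$, so $F\ge\alpha_n$ there for every $n$.

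\textbf{Your route.} You replace the abstract nested family by an explicit pencil of hyperplanes $\{\psi=\tan\gamma\,(1+h)\}$ rotating about the codimension-two axis $\{h=-1,\ \psi=0\}$. This axis stays at positive distance from $C$. The result is a closed-form $f$ whose sublevel sets are visibly $C$ cut by half-spaces. It also has an explicit Lipschitz constant: your estimates give $\|Df\|\le 2\max\{\|\psi\|,\|\ell\|\}$ on $C$, where $\ell$ is the linear part of $h$. Your three-point inequality on the line through $p$, $c_t$, $w_t$ is just the pointwise form of the paper's half-space argument.

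\textbf{What each buys.} Your version is self-contained and coordinate-free: no inductive choice of the $b_n$, no external lemma, no projection $P$. The paper's template has the advantage of being reused almost verbatim for Theorem~\ref{T:nonR}. There the cutting lines must meet a bounded boundary segment and converge to the supporting line itself.

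\textbf{Two small precision points.}
\begin{itemize}
\item The assertion $\theta(p)\in(\pi/2,\pi)$ needs $p$ chosen with $\psi(p)>0$ as well as $1+h(p)<0$. Alternatively, note that the directional derivative of $\theta$ at $c_t$ along $c_t-p$ has the sign of $a_0b_1-a_1b_0$, where $(a_0,b_0)$ and $(a_1,b_1)$ are the values of $(1+h,\psi)$ at $p$ and $c_t$. This quantity is negative for large $t$ whatever the sign of $\psi(p)$.
\item The claim that $\theta$ is monotone along any line should be restricted to lines missing the axis, which is the case for the lines you use.
\end{itemize}
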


\begin{proof}
We can (and do) assume that $0\in\mathrm{int}\,C$. Let $v$ be an asymptotic direction for $C$,
hence there exists $x_0\in X$ such that the line $x_0+\R v$ does not intersect $\mathrm{int}\,C$
and $d(x_0+tv,C)\to 0$ as $t \to+\infty$. Notice that $x_0$ and $v$ are linearly independent.

By the Hahn-Banach theorem,
there exists $h\in X^*\setminus\{0\}$ such that $\sup h(C)=\inf h(x_0+\R v)\equiv h(x_0)=1$.
Clearly, $h(v)=0$. Let $Z\subset X$ be a (necessarily $(d-2)$-dimensional) subspace
such that $\mathrm{Ker}(h)=\R v\oplus Z$. So, we have
$$
X=Y\oplus Z \quad \text{where} \quad Y=\mathrm{span}\{x_0,v\}.
$$
By means of an appropriate linear homeomorphism of the form
$$
tv+s x_0+z \mapsto (t,s,\zeta)\,,\quad t,s\in\R,\ z\in Z,\ \zeta\in\R^{d-2},
$$
we can (and do) identify $X$ with the Euclidean space $\R^d\equiv\R\times\R\times\R^{d-2}$.
After this identification, we have
$$
C\subset\{(t,s,\zeta): s\le1\} ,\quad
\R^+v\subset\mathrm{int}\,C  \quad\text{and}\quad
d((t,1,0),C)\to0\ \text{as $t\to+\infty$.}
$$

Put $A=\{(t,s): s<1\}\subset\R\times\R$. Let us inductively define real numbers $b_n>0$ and
open convex sets $D_n\subset A$ ($n\ge1$ integer).
{\em The reader is encouraged to sketch a picture!}

Fix a decreasing sequence $\{\epsilon_n\}_n\subset(0,+\infty)$ such that $\epsilon_n\to0$.
Put $b_1=1$ and define
$$\textstyle
B_1=\left\{
(t,s):\; s<1,\;s<1-\frac{\epsilon_1}{b_1}(t-b_1)
\right\},
$$
that is, the part of $A$ lying below the line connecting the points
$(0,1+\epsilon_1)$ and $(b_1,1)$.
Now, assume we have already defined $b_n>0$ and an open convex $B_n\subset A$.
Then choose $b_{n+1}>0$ so that $(b_{n+1},0)\notin 2 B_n$, and define
$$\textstyle
B_{n+1}=\left\{
(t,s):\; s<1,\;s<1-\frac{\epsilon_{n+1}}{b_{n+1}}(t-b_{n+1})
\right\},
$$
that is, the part of $A$ lying below the line connecting the points
$(0,1+\epsilon_{n+1})$ and $(b_{n+1},1)$. 

By our construction, the sequence $\{B_n\}_{n\ge1}$ is increasing; 
moreover,
$$
\bigcup_{n\ge1}B_n =A,\ \text{and}\ 
A\cap 2B_n\subset B_{n+1}\ \text{for each $n\ge1$.}
$$
By \cite[Lemma~2.9]{DEVEqc_examples}, there exist an unbounded, increasing
sequence $\{\alpha_n\}_{n\ge1}$ and a Lipschitz QC function
$g\colon A\to\R$ such that $B_n=[g<\alpha_n]_A$ for each $n$. Clearly,
$g$ admits a unique Lipschitz (necessarily QC) extension to 
$\overline{A}=\{(t,s): s\le1\}$; so let us consider $g$ defined on $\overline{A}$.

Consider the canonical projection $P\colon\R^d\to\R^2$ defined by
$$
P(t,s,\zeta)=(t,s),\quad t,s\in\R,\ \zeta\in\R^{d-2}.
$$
Now, the formula
$$
f(t,s,\zeta):=g(P(t,s,\zeta))\,, (t,s,\zeta)\in C,
$$
gives a well-defined Lipschitz QC function $f\colon C\to\R$.
We claim that $f$ cannot be extended to a QC function
defined on the whole $\R^d$.

Proceeding by contradiction, assume that there exists a QC extension
$F$ of $f$ to the whole $\R^d$. For each $n\ge2$,
$[F<\alpha_n]_{\R^d}\cap (\mathrm{int C})=
[f<\alpha_n]_{\mathrm{int}\,C}=P^{-1}([g<\alpha_n]_A)\cap(\mathrm{int}\,C)$.
Since the boundary of the last set contains a relatively open piece of
the hyperplane $\{(t,s,\zeta): s=1-\frac{\epsilon_{n+1}}{b_{n+1}}(t-b_{n+1})\}$,
by convexity the set $[F<\alpha_n]_{\R^d}$ must be contained in the
half-space $\{(t,s,\zeta): s<1-\frac{\epsilon_{n+1}}{b_{n+1}}(t-b_{n+1})\}$.
But this implies that $F(0,1+\epsilon_1,0)\ge\alpha_n$ ($n\ge2$). This is clearly 
impossible since $\alpha_n\nearrow+\infty$.
\end{proof}

%%%%%%%%%%%%%%%%%%%%%%%%%%%%%%%%%%%%%%%%%
\medskip

The next result follows from our general result from \cite{DEVEqc_examples},
described in Theorem~\ref{T:old}(b). Here we briefly sketch 
a simple direct proof that follows the line of the proof of Theorem~\ref{noQC}.

\begin{theorem}\label{T:nonR}
Let $\mathrm{dim}(X)=d\ge2$ and let $C\subset X$ be a convex body which is not rotund.
Then there exists a Lipschitz QC function $f\colon C\to\R$ that admits no continuous 
QC extension to the whole $X$.
\end{theorem}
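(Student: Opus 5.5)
Following the line of the proof of Theorem~\ref{noQC}, I would first normalize coordinates. Since $C$ is not rotund, $\partial C$ contains a nontrivial segment $[p,q]$. We may assume $0\in\mathrm{int}\,C$. By Hahn--Banach there is $h\in X^*\setminus\{0\}$ with $\sup h(C)=h(p)=h(q)=1$. Put $v=q-p$, so $h(v)=0$, and choose a $(d-2)$-dimensional subspace $Z$ with $\mathrm{Ker}(h)=\R v\oplus Z$. Exactly as before, we identify $X$ with $\R\times\R\times\R^{d-2}$ via $tv+sx_0+z\mapsto(t,s,\zeta)$, where $x_0$ satisfies $h(x_0)=1$ and $x_0\in p+Z$; for instance $x_0=p$ after translating within $\mathrm{Ker}\,h$. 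After this identification we have $C\subset\{s\le1\}$ and the segment $\{(t,1,0):0\le t\le1\}\subset\partial C$. Let $A=\{(t,s):s<1\}$ and let $P$ be the projection onto the first two coordinates.

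Next I would build a Lipschitz QC function $g$ on $\overline A$ whose sublevel sets ``bend'' at the midpoint $m=(1/2,1)$ of the segment. Fix $\epsilon_n\searrow0$ and let $B_n$ be the part of $A$ lying below the two lines through $(1/2,1)$ with slopes $\pm\epsilon_n$ (and, if necessary, intersected with a large disc of radius $n$ for boundedness and exhaustion). Then $B_n$ is open and convex, $\{B_n\}$ is increasing, and $\bigcup_n B_n=A$. Choosing the radii suitably, we also get $A\cap 2B_n\subset B_{n+1}$. By \cite[Lemma~2.9]{DEVEqc_examples} there are $\alpha_n\nearrow+\infty$ and a Lipschitz QC function $g$ on $A$ with $B_n=[g<\alpha_n]_A$. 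We extend $g$ to $\overline A$ and put $f=g\circ P$ on $C$, which is Lipschitz and QC.

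To show non-extendability, suppose $F$ is a continuous QC extension of $f$. Fix $n$. The set $[F<\alpha_n]\cap\mathrm{int}\,C=P^{-1}(B_n)\cap\mathrm{int}\,C$ has, near the points $(1/2\pm\delta,1,0)$ in $\mathrm{int}\,C$ (just below the segment), boundary pieces lying on the two hyperplanes $s=1\mp\epsilon_n(t-1/2)$. Relatively open pieces of both hyperplanes lie in $\partial[F<\alpha_n]$, since the points on the segment near $m$ are approached from $\mathrm{int}\,C$. By convexity, $[F<\alpha_n]$ is then contained in the intersection of the two open half-spaces below these hyperplanes, and these meet the line $\{s=1\}$ at the single point $m$ only in the closure. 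Hence the point $(1/2,1,0)=m$ does not lie in the open half-spaces, so $F(m)\ge\alpha_n$. Meanwhile, $m\in C$ gives $F(m)=f(m)=g(1/2,1)$, which is finite. Letting $n\to\infty$ yields a contradiction.

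The main obstacle is that the argument above seems to prove too much: it uses no continuity and only needs $m\in C$, where $f$ is defined and finite. This shows I must design $g$ so that its sublevel sets are consistent with $g$ being defined, and even Lipschitz, at boundary points. I expect the right design is this: $B_n$ exhausts $A$ and meets the segment only in the limit, with $g(1/2,1)=\lim$ of values along the segment; the tilted supporting pieces force $[F<\alpha]$ to avoid every point on the line $\{s=1,\zeta=0\}$ outside the segment at slightly larger level, while points on the segment have bounded values. Continuity of $F$ is then violated at an endpoint of the segment, approached from outside $C$ along that line. Making this precise, in particular choosing the $B_n$ so that the forced sublevel constraints accumulate only at an endpoint, is the step I expect to require the most care.
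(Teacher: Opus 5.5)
There is a genuine gap: your concrete construction fails before the extension argument starts. Your sets $B_n=\{(t,s): s<1-\epsilon_n|t-\tfrac12|\}$ all have the vertex $m=(\tfrac12,1)$ in their closure. The point $m$ is also a limit of points of $A\setminus B_{n+1}$, e.g.\ $(\tfrac12+\delta,\,1-\tfrac12\epsilon_{n+1}\delta)$ as $\delta\to0^+$. So any continuous $g$ on $\overline A$ with $[g<\alpha_n]_A=B_n$ would satisfy $\alpha_{n+1}\le g(m)\le\alpha_n$, which is impossible for increasing $\alpha_n$.

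Equivalently, your claim $A\cap 2B_n\subset B_{n+1}$ is false, and so is $A\cap\eta B_n\subset B_{n+1}$ for every $\eta>1$, because $m\in\mathrm{int}(\eta B_n)$. Cutting with large discs cannot help, since the defect is local at $m$. This, and not a missing use of continuity, is why your argument ``proves too much'': the function $f$ it refers to does not exist. In fact the argument does use continuity. Convexity only places $[F<\alpha_n]$ in the \emph{closed} half-spaces, which contain $m$; excluding $m$ requires $[F<\alpha_n]$ to be open.

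Your last paragraph, where the proof would actually happen, is only a heuristic. The route it proposes, forcing large values of $F$ on $\{s=1,\zeta=0\}$ just beyond an endpoint $e$, fails in general. Suppose $\partial C$ is smooth at $e$ (e.g.\ $C\subset\R^2$ the convex hull of two disjoint unit discs). For $\alpha>f(e)$, continuity of $f$ gives $B=\overline{[f<\alpha]_C}\supset C\cap B(e,r)$ for some $r>0$. A short compactness argument then shows that no hyperplane supporting $B$ at a point $y$ with $\|y-e\|\ge r$ has points of that line just beyond and close to $e$ strictly on its far side. So no tilted piece can force $F$ to be large there.

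The missing idea is to approach $e$ from \emph{above}, using level lines that cross $\{s=1\}$ at distinct interior points of the segment. In the paper the segment is normalized to $\{(t,1,0):0\le t\le2\}$, with $\epsilon_n\searrow0$ and $1=b_1<b_n\nearrow2$. Then $B_n$ is the part of $A$ below the line through $(0,1+\epsilon_n)$ and $(b_n,1)$. The argument runs as follows.
\begin{itemize}
\item The crossing points $b_n$ are distinct and the lines get flatter, so $A\cap\eta_{n+1}B_n\subset B_{n+1}$ for some $\eta_{n+1}>1$, and \cite[Lemma~2.9]{DEVEqc_examples} applies.
\item Since $(0,s)\in B_1$ for $s<1$, one gets $f(0,1,0)\le\alpha_1$.
\item Consider the pieces of the hyperplane $s=1-\frac{\epsilon_{n+1}}{b_{n+1}}(t-b_{n+1})$ lying in $\mathrm{int}\,C$ just below the segment, to the right of $(b_{n+1},1,0)$. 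They force $F(0,1+\epsilon_n,0)\ge\alpha_{n+1}\ge\alpha_2>\alpha_1\ge F(0,1,0)$ for every $n$.
\item Since $(0,1+\epsilon_n,0)\to(0,1,0)$, this contradicts continuity of $F$ at $(0,1,0)$.
\end{itemize}
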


\begin{proof}
Assume that $\partial C$ contains a nontrivial line segment $[c,d]$. Let $0\in\mathrm{int}\,C$ and
$v=\frac12(d-c)$. By the Hahn-Banach theorem, there exists $h\in X^*\setminus\{0\}$
such that $h(v)=0$ and $h(c)\equiv\inf h([c,d])=\sup h(C)=1$. Write $\mathrm{Ker}(h)=\R v \oplus Z$,
so that $X=\R v\oplus\R c\oplus Z$. We can suppose that 
$X=\R\times\R\times\R^{d-2}\equiv\R^d$, $C\subset\{(t,s,\zeta):s\le1\}$
and $\partial C$ contains the segment with endpoints $(0,1,0)$ and $(2,1,0)$.
Consider strictly monotone sequences $\epsilon_n\searrow 0$ and $b_n\nearrow 2$ such that $b_1=1$.
In $A=\{(t,s): s<1\}$, define the sets $B_n$ ($n\ge1$) by 
$$\textstyle
B_{n}=\left\{
(t,s):\; s<1,\;s<1-\frac{\epsilon_{n}}{b_{n}}(t-b_{n})
\right\}
$$
as in the previous theorem. That is, $B_n$ is the part of $A$ lying under
the line that connects $(0,1+\epsilon_n)$ and $(b_n,1)$.
Then the sets $B_n$ form an increasing sequence,
and for each $n$ there exists $\eta_{n+1}>1$ such that $A\cap \eta_{n+1}B_n\subset B_{n+1}$.
By \cite[Lemma~2.9]{DEVEqc_examples}, there exist an increasing sequence
$\{\alpha_n\}_n$ of positive reals and a Lipschitz QC $g\colon \overline{A}\to\R$
such that $[g<\alpha_n]_A=B_n$ for each $n$. 
It is easy to see that $g(0,s,0)<\alpha_1$ whenever $0<s<1$,
and $g(0,1,0)\le\alpha_1$.
Finally, define $f\colon C\to\R$ by $f=g\circ P$, where 
$P\colon(t,s,\zeta)\to(t,s)$ for $(t,s,\zeta)\in\R\times\R\times\R^{d-2}$.

Assume there exists a continuous QC extension $F\colon\R^d\to\R$
of $f$. By the same convexity argument as in the proof of Theorem~\ref{noQC},
for each fixed $n\ge1$, we must have $F(0,1+\epsilon_n,0)\ge \alpha_{k}$ for each $k>n$.
Now, denoting $\alpha_\infty=\sup_n\alpha_n$ we obtain
$F(0,1+\epsilon_n,0)\ge\alpha_\infty$ for each $n$. On the other hand,
$F(0,1,0)=f(0,1,0)\le\alpha_1<\alpha_\infty$, which is impossible since
$F$ should be continuous.
\end{proof}

%%%%%%%%%%%%%%%%%%%%%%%%%%%%%%%%%%%%%%%%%%%%%
\medskip

\section{Nonexistence of uniformly continuous QC extensions}

In our finite-dimensional case, the result from \cite{DEVEqc_UC} described in
Theorem~\ref{T:old}(c) implies
that {\em if $C\subset X$ is a bounded, rotund convex body, then every uniformly continuous QC function
on $C$ admits a (uniformly) continuous QC extension to the whole $X$.} (Indeed, thanks to compactness,
such $C$ is automatically uniformly rotund.) Our present Theorem~\ref{no_ucQC} shows
that the boundedness assumption cannot be omitted. The proof is quite technical, but very geometric in its nature.

\begin{theorem}\label{no_ucQC}
Let $\mathrm{dim}\,X=d\ge2$, and let $C\subset X$ be an unbounded, rotund, convex body
with no asymptotic directions.
Then there exists a Lipschitz QC function $f\colon C\to\R$ that admits no
uniformly continuous QC extension to the whole $X$.
\end{theorem}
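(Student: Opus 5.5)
Since $C$ is unbounded with no asymptotic directions and rotund, the plan is to imitate the construction in Theorem~\ref{noQC}, replacing the asymptotic line by a sequence of supporting hyperplanes far out in a recession direction. Assume $0\in\mathrm{int}\,C$ and fix $v\in\mathrm{recc}(C)\cap S_X$. Because $v$ is not an asymptotic direction, for every $x$ with $(x+\R^+v)\cap\mathrm{int}\,C=\emptyset$ the distance $d(x+tv,C)$ does not tend to $0$. The geometric consequence I will aim for is the following. Choose points $c_n\in\partial C$ going to infinity along the ``direction'' $v$, and supporting functionals $h_n\in S_{X^*}$ at $c_n$. Then the sets $C$ and $c_n+\{h_n\ge 0\}$ ``open up'' near $c_n$: there exist $\delta>0$ and points $y_n\notin C$ with $d(y_n,C)\ge\delta$. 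Meanwhile the distance between $y_n$ and points $z_n\in\mathrm{int}\,C$, where $f$ will be small, tends to $0$ relative to the size of the jump. More precisely, I want pairs $y_n,y_n'\in X\setminus C$ with $\|y_n-y_n'\|\to0$, together with a QC $f$ on $C$ that forces $F(y_n)\ge\alpha_{n+1}$ but $F(y_n')\le\alpha_n$ for any QC extension $F$, where $\alpha_{n+1}-\alpha_n\ge1$. That contradicts uniform continuity.

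For the construction, I would exploit that a point $c\in\partial C$ far out has an ``almost flat'' supporting hyperplane. Since $\mathrm{recc}(C)\ne\{0\}$, the tangent hyperplanes at $c_n=$ boundary points on rays $p+tv$ (with $p\notin C$ suitably chosen) become almost parallel to $v$. I expect to use compactness of $S_{X^*}$: take a limit functional $h$ with $h(v)=0$ and $\sup h(C)<\infty$. If $\sup h(C)$ were attained only asymptotically along $v$, we would get an asymptotic direction. So the absence of asymptotic directions forces $\sup h(C)$ to be approached in a controlled way, with $d(x_0+tv,C)\ge\eta>0$ for the supporting line, where $x_0$ lies on $\{h=\sup h(C)\}$. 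Rotundity guarantees that each $h_n$ touches $C$ at a single point $c_n$, so near $c_n$ the set $\{f<\alpha_n\}$ can be made to have the hyperplane $\{h_n=h_n(c_n)\}$ as its boundary over a relatively open piece (this is the mechanism of Theorem~\ref{noQC}).

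Concretely, I would define an increasing sequence of open convex sets $B_n\subset\mathrm{int}\,C$, each equal to $\mathrm{int}\,C$ intersected with a half-space $\{h_n'<1\}$, where the hyperplanes $\{h_n'=1\}$ cut $C$ along pieces near $c_n$ and tilt toward the supporting hyperplane. I would arrange $\mathrm{int}\,C\cap\eta_{n+1}B_n\subset B_{n+1}$ and $\bigcup B_n=\mathrm{int}\,C$, and then apply \cite[Lemma~2.9]{DEVEqc_examples} to get a Lipschitz QC $f$ with $[f<\alpha_n]=B_n$ and $\alpha_n\to\infty$. Any QC extension $F$ must satisfy $[F<\alpha_n]\subset\{h_n'<1\}$, since the boundary shares a relatively open piece of that hyperplane. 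Hence $F\ge\alpha_n$ on $\{h_n'\ge1\}$. Inside $C$, however, $f$ near $c_{n}$ is $\le\alpha_{n+1}$ on points close to $\{h_n'=1\}$ from the other side outside $C$. Choosing the hyperplanes so that $\{h_n'=1\}\setminus C$ and $\{h_{n+1}'<1\}\setminus C$, or points of $C$ with small $f$, come within distance $\epsilon_n\to0$ of each other yields the contradiction.

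The main obstacle is the quantitative geometric lemma: producing, from ``no asymptotic directions'' plus unboundedness, hyperplanes cutting off pieces of $C$ whose outside parts come arbitrarily close to points where the prescribed levels differ by a fixed amount. This must be done while keeping the sequence nested with the $\eta_n$-condition required by Lemma~2.9. I would first try the planar case $d=2$, where $C$ is bounded by a convex curve whose slope tends to that of $v$ and whose tangent lines move away linearly. There I would pick $c_n$ so that the tangent line at $c_n$ passes within $\epsilon_n$ of points of $\{f\le\alpha_{n-1}\}$, and then lift to $\R^d$ via a projection as in Theorem~\ref{noQC}.
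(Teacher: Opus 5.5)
There is a genuine gap. The step you yourself call ``the main obstacle'' is to cut $C$ so that the region where every QC extension is forced to be large comes within $\epsilon_n\to0$ (outside $C$) of points of $C$ where $f$ is lower by a fixed amount, while $f$ stays Lipschitz on $C$. That step is essentially the whole theorem, and the proposal contains no construction for it. Several of the heuristics you give for it are also wrong.

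For $v\in\mathrm{recc}(C)\setminus\{0\}$, a nonzero functional $h$ with $h(v)=0$ and $\sup h(C)<\infty$ does not exist here. By Lemma~\ref{L:bdd_sublevels}(a) applied to $-h$, the supremum would be attained at some $c$. Then $c+\R^+v\subset\partial C$, contradicting rotundity.

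A supporting hyperplane meets the rotund body $C$ in a single point, so it cannot bound a sublevel set of $f$ along a relatively open piece. The trapping argument of Theorem~\ref{noQC} needs hyperplanes that meet $\mathrm{int}\,C$.

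No bound $F(y_n')\le\alpha_n$ can be forced at a point $y_n'\notin C$: replacing $F$ by $\max\{F,M\}$ on an open half-space missing $C$ gives another QC extension. So the low point must lie in $C$.

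Finally, you have not shown that single half-space cuts $\mathrm{int}\,C\cap\{h_n'<1\}$ can satisfy nestedness, exhaustion, fixed level gaps with $f$ Lipschitz, and a vanishing outside gap all at once. For parallel cuts this is in fact impossible: a small gap between consecutive hyperplanes forces $\overline{B_n}$ and $C\setminus B_{n+1}$ to be close inside $C$ as well. (Incidentally, the paper's proof uses only unboundedness and rotundity; the absence of asymptotic directions plays no role in it.)

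The idea you are missing is a finite-total-turning argument in a planar projection.

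\emph{Reduction to the plane.} Let $c_0$ be the point of $\R v\cap\partial C$, and let $h$ support $C$ at $c_0$, normalized by $h(v)=1$. Rotundity makes every $[h\le\alpha]_C$ bounded. This is what makes the projection $D=P(C)$ onto $Y=\mathrm{span}\{u,v\}$ ($u\in\ker h$), along a complement of $\R u$ in $\ker h$, a closed, unbounded, rotund body. The lifting is not automatic as in Theorem~\ref{noQC}, where the planar set was a half-plane.

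\emph{The key estimate.} A component $\Gamma$ of $\partial D\cap\{h\ge0\}$ is the graph of a strictly concave increasing function, and $h|_\Gamma$ is bi-Lipschitz. Take $y_n\in\Gamma$ with $\|y_{n+1}-y_n\|=1$, and set $\alpha_n=h(y_n)$, so that $\alpha_{n+1}-\alpha_n\ge\beta>0$. The unit vectors $y_{n+1}-y_n$ move monotonically along an arc of $S_Y$ of finite length, hence $\|y_{n+1}+y_{n-1}-2y_n\|\to0$. Consequently the chord line $l_{2k}$ through $y_{2k},y_{2k+1}$ passes within $o(1)$ of $y_{2k-1}$, outside $D$. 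Inside $D$, the cap it cuts off lies in $\{h\ge\alpha_{2k}\}$, at distance at least $\beta$ from $y_{2k-1}$.

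\emph{The function.} It is written explicitly rather than via \cite[Lemma~2.9]{DEVEqc_examples}. On $\{\alpha_{2k-1}\le h<\alpha_{2k}\}$ it ramps linearly in $h$ from $\alpha_{2k-1}$ to $\alpha_{2k+1}$. On $\{\alpha_{2k}\le h<\alpha_{2k+1}\}$ it equals $\alpha_{2k+1}+d(\cdot,H_{2k})$, where $H_{2k}$ is the half-plane bounded by $l_{2k}$ containing $0$. Transverse cuts give exhaustion and Lipschitz continuity, and chord cuts give the jump. This alternation is exactly what your single-cut scheme lacks.

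\emph{The contradiction.} Any QC extension is $>\alpha_{2k+1}$ off $P^{-1}(H_{2k})$. Yet it equals $\alpha_{2k-1}$ at a point of $\partial C$ lying over $y_{2k-1}$, whose distance to $X\setminus P^{-1}(H_{2k})$ is at most $d(y_{2k-1},l_{2k})\to0$. This contradicts uniform continuity.
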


\medskip

The rest of the present section will be devoted to the proof. We shall proceed in several steps.

\medskip

\begin{proof}[Proof of Theorem~\ref{no_ucQC}]
First, we may (and do) assume that $0\in\mathrm{int}\,C$. Let $R>0$ be such that 
$$
R B_X\subset C.
$$
Since $C$ is unbounded, there exists a nonzero $v\in\mathrm{recc}(C)$; consequently, $\R^+ v\subset\mathrm{int}\,C$.
Since $C$ cannot contain lines (otherwise also $\partial C$ would), the line $\R v$ intersects $\partial C$
at exactly one point, say $c_0$. Let $h\in S_{X^*}$ be such that $m:=h(c_0)=\min h(C)$ ($<0$). By
taking an appropriate multiple of $v$, we can (and do) assume that $h(v)=1$.

\medskip

\noindent\textbf{\em Claim 1.} For each $\alpha\ge m$, the set $C_\alpha:=[h\le\alpha]_C$ is bounded.

\smallskip\noindent
If this is not the case, for some $\alpha\ge m$ we have
$C_\alpha\subset[m\le h\le\alpha]_X$ and the last set contains a half-line.
This forces $h$ to be constant on such a half-line, and therefore
$\partial C$ contains a half-line parallel to the previous one and passing
throuh $c_0$. But this is impossible since $C$ is rotund. Claim~1 is proved.

\medskip

Now, fix an arbitrary $u\in\mathrm{Ker}(h) \setminus\{0\}$ and define $Y=\mathrm{span}\{u,v\}$.
Clearly, $\mathrm{dim}\,Y=2$ and $c_0\in Y$. Let $Z\subset\mathrm{Ker}(h)$ be a subspace such that
$\mathrm{Ker}(h)=\R u\oplus Z$. Consequently, $X=\R u\oplus \R v\oplus Z=Y\oplus Z$.
Let $P\colon X\to Y$ be the onto linear projection along $Z$, that is, $\mathrm{Ker}(P)=Z$.
Notice that $P(\mathrm{Ker}(h))=\R u$ and $h=h\circ P$ (that is, $h(\alpha u+\beta v+z)=\beta=h(\alpha u+\beta v)$
whenever $\alpha,\beta\in\R$, $z\in Z$).

\medskip

\noindent\textbf{\em Claim 2.} The set $D:=P(C)$ is an unbounded, rotund, convex body in $Y$.

\smallskip\noindent
$D$ is clearly unbounded since it contains $\R^+ v$. Since $P$ is an open mapping,
$D$ has nonempty interior. Moreover, $D\subset[h\ge m]_Y$ implies that
$D\ne Y$. To show that $D$ is closed in $Y$, it suffices to show that
each set $D_\alpha:=[h\le\alpha]_D$ is closed, but this is clear since
$D_\alpha=P(C_\alpha)$ and $C_\alpha$ is compact. Finally,
if $y,y'\in\partial D$ are two distinct points, then
there exist unique (distinct) points $x,x'\in\partial C$ such that
$Px=y$ and $Px'=y'$. 
Then $\frac12(y+y')=P(\frac12 (x+x'))\subset P(\mathrm{int}\,C)\subset\mathrm{int}\,D$.
This shows that $D$ is rotund. Claim~2 is proved.

\medskip

	Fix one of the two connected components of $[h\ge0]_{\partial D}$, say
$\Gamma$. Since $\min h(D)=m<0$ and $R B_Y\subset D$, $\Gamma$ can be seen
as the graph of an increasing, Lipschitz, strictly concave function $g$ on $\R^+$. Therefore,
the restriction $h|_\Gamma$ provides a bi-Lipschitz homeomorphism between $\Gamma$ and $\R^+$.
So there exists $\beta>0$ such that
$$
\beta\|y-y'\| \le |h(y)-h(y')| \le \|y-y'\|,\quad y,y'\in\Gamma.
$$
Let $\{y_n\}_{n\ge1}\subset\Gamma$
be the sequence defined by:
$$
0=h(y_1)<h(y_2)<h(y_3)<\cdots\quad\text{and}\quad
\|y_{n+1}-y_n\|=1\ \text{for each $n$.}
$$
Let us also denote, for $k\in\N$, $\alpha_k=h(y_k)$ and notice that by our previous observation we have 
$\alpha_{k+1}-\alpha_k\ge\beta$.
\medskip

\noindent\textbf{\em Claim 3.}\ \ $\| y_{n+1}+y_{n-1}-2y_n  \|\to 0$.
\smallskip

\noindent
The sequence $\Delta_n:=y_{n+1}-y_n$ is contained in the half-sphere $[h>0]_{S_Y}$, 
or more precisely, in one of the two connected components, say $\gamma$,
of the set $[h>0]_{S_Y}\setminus\bigl\{v/\|v\|\bigr\}$.
By strict concavity of $g$ (or rotundity of $D$), the sequence $\{\Delta_n\}_n$ is strictly
monotone in any of the two natural orderings of $\gamma$ (clockwise or counter-clockwise).
Because of finite length of $\gamma$, for every fixed $\epsilon>0$ there can be only
finitely many indices $n\ge2$ such that $\|\Delta_n-\Delta_{n-1}\|\ge\epsilon$. In other words,
$\|\Delta_{n}-\Delta_{n-1}\|\to 0$, which is our Claim~3.
\smallskip

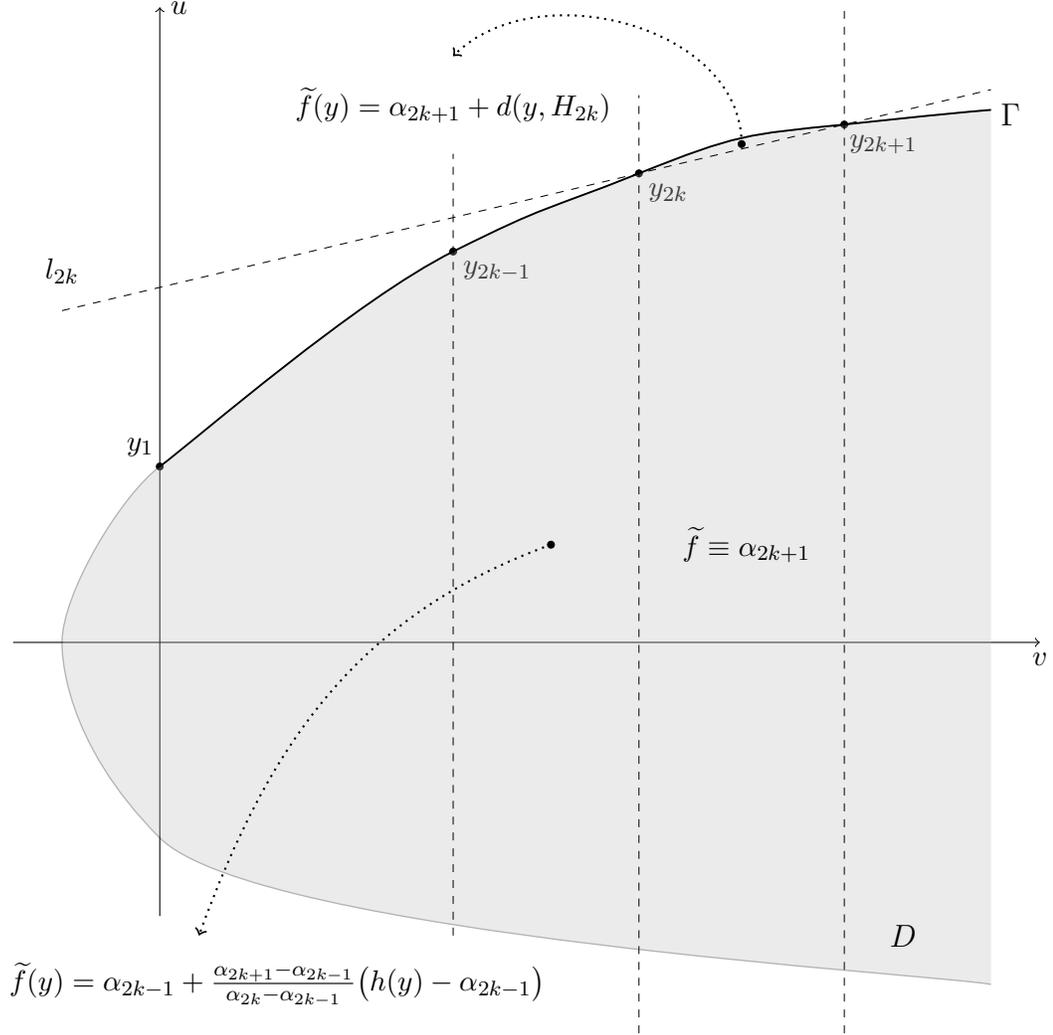
\begin{figure}
	\centering

	\begin{tikzpicture}[scale=1.3]
		\draw[->](-1.5,0)--(9,0)node[below]{\small{\( v \)}};
		\draw[->](0,-2.8)--(0,6.5)node[right]{\small{\( u \)}};

		\draw (0,1.8) circle[radius=1pt];
		\fill (0,1.8) circle[radius=1pt];
		\node at (-.2,2) {\small{$y_1$}};

		\draw (3,4) circle[radius=1pt];
		\fill (3,4) circle[radius=1pt];
		\node at (3.45,3.8) {\small{$y_{2k-1}$}};

		\draw (4.9,4.8) circle[radius=1pt]; \fill (4.9,4.8) circle[radius=1pt]; \node at (5.2,4.6) {\small{$y_{2k}$}};

		\draw (7,5.3) circle[radius=1pt];
		\fill (7,5.3) circle[radius=1pt];
		\node at (7.4,5.1) {\small{$y_{2k+1}$}};

		\draw[convexset]
		(8.5,-3.5) .. controls + (-0.5,0.1) and + (1,-1) ..(0,-2) .. controls + (-1,1) and + (0,-0.1) .. (-1,0)  .. controls + (0,0.5) and + (-0.4,-0.3) ..(0,1.8) .. controls + (0.4,0.3) and + (-1,-0.5) .. (3,4) .. controls + (1,0.5) and + (-1,-.4) ..(4.9,4.8).. controls + (1,.4) and + (-1,-.1) ..(7,5.3).. controls + (.1,.01) and + (-.6,-0.05) ..(8.5,5.45);

		\draw[line width = .7 pt](0,1.8) .. controls + (0.4,0.3) and + (-1,-0.5) .. (3,4) .. controls + (1,0.5) and + (-1,-.4) ..(4.9,4.8).. controls + (1,.4) and + (-1,-.1) ..(7,5.3).. controls + (.1,.01) and + (-.6,-0.05) ..(8.5,5.45);
		
		\node at (8.7,5.4) {{$\Gamma$}};	
		\node at (7.6,-3) {{$D$}};	
		
		\draw[dashed,thick, line width = 0.2 pt](-1,3.3952)--(8.5,5.6571);
		
		\node at (-1,3.8) {\small{$l_{2k}$}};

		\draw[dashed,thick, line width = 0.2 pt](3,-3)--(3,5);
		\draw[dashed,thick, line width = 0.2 pt](4.9,-4)--(4.9,5.6);
		\draw[dashed,thick, line width = 0.2 pt](7,-4)--(7,6.5);

		\node at (6,1) {\small{$\tilde{f}\equiv 
				\alpha_{2k+1}$}};

		\draw [->, thick, dotted, rounded corners] (4,1) to [out=200,in=70] (0.4,-3);
		\draw (4,1) circle[radius=1pt]; \fill (4,1) circle[radius=1pt];

		\node at (1.2,-3.5) {\small{$\tilde{f}(y)= \alpha_{2k-1}+\frac{\alpha_{2k+1}-\alpha_{2k-1}}{\alpha_{2k}-\alpha_{2k-1}}\bigl(h(y)-\alpha_{2k-1}\bigr)$}};

		\draw [->, thick, dotted, rounded corners] (5.95,5.1) to [out=90,in=45] (3,6);
		\draw (5.95,5.1) circle[radius=1pt]; \fill (5.95,5.1) circle[radius=1pt];

		\node at (3,5.5) {\small{$\tilde{f}(y)=\alpha_{2k+1} +d(y,H_{2k})$}};

	\end{tikzpicture}
	
	\caption{Construction of the function $\tilde f$}\label{FIG-ftilde}
\end{figure}

\noindent{\em Definition of $\tilde f\colon D\to\R$ (see Figure~\ref{FIG-ftilde}).} 
\smallskip

For $k\in\N$, let $l_k$ be the line in $Y$ passing through the points $y_k$ and $y_{k+1}$, and let $H_k$ be the closed half space in $Y$ containing the origin and determined by $l_k$. Let us observe that, for $k\geq 2$, the point  $2y_{k}-y_{k+1}$ belongs to $l_k$ and hence, by Claim~3, it holds
$$d(y_{k-1},l_k)\leq \|2y_{k}-y_{k+1}-y_{k-1}\|\to 0.$$ 

Let us start by putting $\tilde{f}(y)=0$ whenever $h(y)< 0$. If $h(y)\geq 0$ we define  $\tilde{f}$ as follows
$$	\tilde{f}(y):=\begin{cases} \alpha_{2k-1}+\frac{\alpha_{2k+1}-\alpha_{2k-1}}{\alpha_{2k}-\alpha_{2k-1}}\bigl(h(y)-\alpha_{2k-1}\bigr) &\text{if } \alpha_{2k-1}\le h(y)< \alpha_{2k}\\
	\alpha_{2k+1} &\text{if } \alpha_{2k}\le h(y)< \alpha_{2k+1},\ y\in H_{2k}\\
	\alpha_{2k+1} +d(y,H_{2k}) &\text{if } \alpha_{2k}\le h(y)< \alpha_{2k+1},\ y\not\in H_{2k}
\end{cases}$$
It is easy to see that the function $\tilde{f}$ is now well-defined on $D$, it is continuous
and QC, and moreover,
$\tilde{f}$ is $\frac{2}{\beta}$-Lipschitz (since it is ``piecewise $\frac{2}{\beta}$-Lipschitz'').
\bigskip

\noindent{\em Definition of $f\colon C\to\R$, and completion of the proof.} 
\smallskip

\noindent
Now, we simply define $f$ by
$$
f:=\tilde{f}\circ P,
$$
where the projection $P$ onto $Y$ has been defined above.
It remains to show that $f$ cannot be extended to a 
uniformly continuous QC function $F\colon X\to\R$.
Proceeding by contradiction, assume that such $F$ exists. For each $n\ge1$, let $x_n\in\partial C$ be such that
$P(x_n)=y_n$.
For each $k\ge1$, the convex set $[F<\alpha_{2k+1}]_C=[f<\alpha_{2k+1}]_C$
is contained in the half-space $W_{2k}:=P^{-1}(H_{2k})$ whose boundary intersects $C$
in a relatively open subset of the boundary. So, by convexity, $[F<\alpha_{2k+1}]_X$
must be contained in the same half-space. Consequently, by our construction together with
Claim ~3,  we have
\begin{multline}\label{stimarossa}
	d\bigl([F<\alpha_{2k}]_X,[F\ge \alpha_{2k+1}]_X\bigr) 
	\le d\bigl(  x_{2k-1}, X\setminus W_{2k} \bigr)= d\bigl( y_{2k-1}, l_{2k} \bigr) \\ 
	\le \|y_{2k+1}+y_{2k-1}-2 y_{2k}\| \to 0\ \text{as $k\to+\infty$.}
\end{multline}
Since $F$ is uniformly continuous on $X$, it admits an invertible 
modulus of continuity $\omega\colon\R^+\to\R^+$,
that is, an increasing homeomorphism of $\R^+$ such that $|F(x)-F(x')|\le\omega(\|x-x'\|)$ 
for all $x,x'\in X$; see \cite{DEVEqc_UC}.
By \cite[Proposition~2.5]{DEVEqc_UC}, we must have
$$
d\bigl([F<\alpha_{2k}]_X,[F\ge \alpha_{2k+1}]_X\bigr) 
\ge \omega^{-1}(\alpha_{2k+1}-\alpha_{2k}) 
\ge\omega^{-1}(\beta)>0
$$
for each $k$. But this is impossible by \eqref{stimarossa}. 
This contradiction completes the proof of Theorem~\ref{no_ucQC}.
\end{proof}

\medskip

%%%%%%%%%%%%%%%%%%%%%%%%%%%%%%%

\section{Nonexistence of Lipschitz QC extensions}

The aim of this section is to show that for an arbitrary convex body $C$ in an arbitrary normed space $X$ of dimension at least 2, there exists a Lipschitz QC 
function on $C$ which cannot be extended to a
Lipschitz QC function defined on the whole $X$. 
This is a generalization of our previous result from Theorem~\ref{T:old}(a) concerning 
only a particular $C$.

Let us start with two lemmas, the first one quite standard and the second one
elementary but a bit technical.

\begin{lemma}\label{L:immagine}
Let $X,Y$ be arbitrary normed spaces, $E\subset X$ a convex set with nonempty interior,
and $P\colon X\to Y$ a continuous, linear, open mapping. Then
$P(\mathrm{int}_X E)=\mathrm{int}_Y P(E)$.
\end{lemma}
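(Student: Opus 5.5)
The plan is to prove the two inclusions separately; only the nontrivial one uses convexity.

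\emph{Inclusion $P(\mathrm{int}_X E)\subset\mathrm{int}_Y P(E)$.} This is immediate. Since $P$ is open, $P(\mathrm{int}_X E)$ is an open subset of $Y$. It is contained in $P(E)$, so it lies in $\mathrm{int}_Y P(E)$.

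\emph{Inclusion $\mathrm{int}_Y P(E)\subset P(\mathrm{int}_X E)$.} Here we use convexity and the fact that $\mathrm{int}_X E\neq\emptyset$. Fix $y\in\mathrm{int}_Y P(E)$, and pick $x_0\in\mathrm{int}_X E$ with $y_0:=P(x_0)$. If $y=y_0$ we are done. Otherwise we extend the segment $[y_0,y]$ slightly beyond $y$ inside $P(E)$. Since $y$ is interior, there is $t>0$ such that
$$
y_1:=y+t(y-y_0)\in P(E).
$$
Choose $x_1\in E$ with $P(x_1)=y_1$. Then
$$
y=\lambda y_1+(1-\lambda)y_0,\qquad \lambda=\frac{1}{1+t}\in(0,1).
$$
Set $x:=\lambda x_1+(1-\lambda)x_0$; by linearity $P(x)=y$.

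It remains to check that $x\in\mathrm{int}_X E$. This is the only step needing care, and it follows from the standard fact that for a convex set $E$, a point in the interior and a point in $E$ give $\lambda x_1+(1-\lambda)x_0\in\mathrm{int}\,E$ whenever $\lambda\in[0,1)$. To verify it directly, take $r>0$ with $x_0+rU_X\subset E$. By convexity of $E$,
$$
\lambda x_1+(1-\lambda)(x_0+rU_X)=x+(1-\lambda)rU_X\subset E,
$$
so $x$ is an interior point of $E$. Hence $y=P(x)\in P(\mathrm{int}_X E)$, which completes the proof. No completeness or finite dimension is needed beyond the stated openness of $P$.
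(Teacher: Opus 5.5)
Your proof is correct, but you prove the nontrivial inclusion by a different route from the paper. The paper argues through closures and interiors. It uses Fact~\ref{F:convex} and the continuity of $P$ to get $P(E)\subset\overline{P(\mathrm{int}_X E)}^Y$. It then passes to interiors and applies Fact~\ref{F:convex} again, to the convex set $P(\mathrm{int}_X E)$, which is nonempty and open because $P$ is open. This yields $\mathrm{int}_Y P(E)\subset\mathrm{int}_Y\bigl[\overline{P(\mathrm{int}_X E)}^Y\bigr]=P(\mathrm{int}_X E)$.

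You instead give a direct pointwise argument. You extend the segment from $P(x_0)$ through $y$ slightly beyond $y$ inside $P(E)$, lift the endpoint to $E$, and use the line-segment principle to see that the corresponding convex combination lies in $\mathrm{int}_X E$.

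Your route gives a more general and more self-contained result. Your argument establishes $\mathrm{int}_Y P(E)\subset P(\mathrm{int}_X E)$ for an arbitrary linear map $P$, using neither continuity nor openness. So openness is needed only for the easy inclusion, and continuity is not needed at all. The paper's route is shorter given Fact~\ref{F:convex}, which it already has available. However, it uses both continuity and openness of $P$ in the nontrivial direction.
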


\begin{proof}
Since $P(\mathrm{int}_X E)$ is a nonempty open set, $P(E)$ has nonempty interior
and $P(\mathrm{int}_X E)\subset \mathrm{int}_Y P(E)$. Now, using Fact~\ref{F:convex} and
the continuity of $P$, we obtain
$$
P(E)\subset P\bigl(\overline{E}^X\bigr)=P\bigl(\overline{\mathrm{int}_X E}^X\bigr)\subset\overline{P(\mathrm{int}_X E)}^Y
\subset \overline{\mathrm{int}_Y P(E)}^Y.
$$
Passing to interiors, we have
$$
\mathrm{int}_Y P(E) \subset \mathrm{int}_Y \! \left[\overline{P(\mathrm{int}_X E)}^Y \right]
\subset \mathrm{int_Y}\!\left[ \overline{\mathrm{int}_Y P(E)}^Y \right]\!.
$$
Now, the statement follows since, again by Fact~\ref{F:convex}, the
second set coincides with 
$\mathrm{int}_Y\left[P(\mathrm{int}_X E)\right]=P(\mathrm{int}_X E)$,
and the third set equals $\mathrm{int}_Y\left[\mathrm{int}_Y P(E)\right]=\mathrm{int}_Y P(E)$.
\end{proof}

\medskip

Let $M$ be a nonempty set in a normed space. Recall that the {\em minimal modulus
of continuity} of a function $g\colon M\to\R$  is the non-decreasing function
$$
\omega_g\colon[0,+\infty)\to[0,+\infty],\ 
\omega_g(t):=\sup\{|g(x)-g(y)|:\; x,y\in M,\;\|x-y\|\le t\}.
$$
It is clear that $\omega_g(0)=0$, and $\omega_g$ is right-continuous at $0$ if and only if
$g$ is uniformly continuous. It is elementary to see that $g$ is $K$-Lipschitz if and only if $\omega_g(t)\le Kt$ for each $t\ge0$.

\medskip

\begin{lemma}\label{lemma: quoziente dimensione 2}
Let $X$ be an arbitrary normed space of dimension at least $2$, and let $Y=\R^2$ (with the euclidean norm). Let 
$P\colon X\to Y$ be a bounded linear operator such that
$\theta B_Y\subset P(B_X)$ for some $\theta>0$. Let $\alpha<\beta$ be real numbers.  
Assume that:
	\begin{enumerate}
		\item $E\subset X$ is a convex body such that $C:=\overline{P(E)}\ne Y$ (and hence $C$ is a convex body in $Y$);
		\item $f\colon C\to\R$ is a continuous QC function;
		\item the sets $C_\alpha:=[f\leq\alpha]_C$ and $C_\beta:=[f\leq\beta]_C$ are convex bodies;
		\item there exist  a line $L\subset Y$ and an open convex set $U\subset C$ 
such that $U\cap\partial_Y C_\beta= U\cap L\ne\emptyset$;
		\item $F\colon E\to\R$ is the continuous QC function defined by $F(x)=f\bigl(P(x)\bigr)$.		 
	\end{enumerate}
Let $\tilde{F}\colon X\to \R$ be a QC extension of $F$, and let $\omega_{\tilde F}$ be its minimal modulus of continuity.
Then: 
\begin{enumerate}[(a)]
	\item\label{A} $P\bigl([\tilde F\leq \beta]_X\bigr)\subset H$ where $H\subset Y$ is the closed half-plane determined by $L$ and containing $C_\beta$;
	\item\label{B} $\omega_{\tilde F}\left(\frac{d(L, C_\alpha)+\eta}{\theta}\right)>\beta-\alpha$ for each $\eta>0$.
\end{enumerate}
\end{lemma}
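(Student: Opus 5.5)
For (a), I will use the same convexity argument as in Theorem~\ref{noQC}, transported through $P$. Put $K:=[\tilde F\le\beta]_X$, which is convex since $\tilde F$ is QC. Then $P(K)$ is a convex subset of $Y$, and it contains $P([F\le\beta]_E)$. Because $f$ is continuous, $U\cap\partial_Y C_\beta\ne\emptyset$ and $C_\beta$ is a convex body, I will take a point $y_0\in U\cap L$ together with a small relatively open segment $S\subset U\cap L$ around $y_0$; this segment lies in $\partial_Y C_\beta$.

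The next task is to show that $P(K)$ contains many points of $C_\beta$ near $S$. By Lemma~\ref{L:immagine}, $P(\mathrm{int}\,E)=\mathrm{int}\,P(E)=\mathrm{int}\,C$, using Fact~\ref{F:convex} for the last equality. Hence every $y\in U\cap C_\beta\cap\mathrm{int}\,C$ has the form $y=Px$ with $x\in E$ and $F(x)=f(y)\le\beta$, so $y\in P(K)$. Since $U$ is open and contained in $C$, we have $U\subset\mathrm{int}\,C$. Therefore $P(K)\supset U\cap C_\beta$, a set whose boundary inside $U$ is exactly the piece $U\cap L$.

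Now suppose some $z\in P(K)$ lies strictly on the other side of $L$. The convex hull of $z$ and $U\cap C_\beta$ then contains a neighbourhood of $y_0$. That neighbourhood includes points of $U\setminus C_\beta$, that is, points $y\in U\subset\mathrm{int}\,C$ with $f(y)>\beta$, and these must lie in $P(K)$. Such a $y$ equals $Px$ for some $x\in K$, but the preimage $x$ need not lie in $E$, so this is not yet a contradiction. This is the delicate point.

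To fix it, I will work directly in $X$. Choose $x_1\in K$ with $Px_1=z$, and choose $x_2\in\mathrm{int}\,E$ with $Px_2$ in $U\cap\mathrm{int}\,C_\beta$ close to $y_0$, so that $x_2$ has a whole neighbourhood $V\subset\mathrm{int}\,E\cap K$ (using openness of $P$ and $\mathrm{int}\,E$). The set $\mathrm{conv}(\{x_1\}\cup V)\subset K$ contains points $x=(1-t)x_1+tv$ with $t$ close to $1$. These points stay in $\mathrm{int}\,E$ because $\mathrm{int}\,E$ is open and $V$ is compactly inside it, after shrinking $V$ and taking $t$ near $1$. At the same time their images cross $L$ into $U\setminus C_\beta$, provided $Px_2$ is chosen close enough to $y_0$ relative to the distance of $z$ from $L$. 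For such $x$ we get $\tilde F(x)=F(x)=f(Px)>\beta$, which contradicts $x\in K$. Making these choices compatible is the main obstacle.

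For (b), fix $\eta>0$ and pick $a\in C_\alpha$ with $d(a,L)<d(L,C_\alpha)+\eta/2$, adjusting slightly to get a point $a\in\mathrm{int}\,C_\alpha\cap\mathrm{int}\,C$ with $f(a)<\alpha$ (or $\le\alpha$). Pick $y\in Y\setminus H$ with $\|y-a\|<d(L,C_\alpha)+\eta$. Let $a=Px_a$ with $x_a\in E$. Since $\theta B_Y\subset P(B_X)$, there is $w\in X$ with $Pw=y-a$ and $\|w\|\le\|y-a\|/\theta$. Setting $x=x_a+w$, we have $Px=y\notin H$, so (a) gives $\tilde F(x)>\beta$, while $\tilde F(x_a)=f(a)\le\alpha$. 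Hence $\omega_{\tilde F}\bigl((d(L,C_\alpha)+\eta)/\theta\bigr)\ge\tilde F(x)-\tilde F(x_a)>\beta-\alpha$.
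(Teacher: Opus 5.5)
Your part (b) is the paper's argument essentially verbatim. Your part (a) follows the paper's idea: push the convex set $K=[\tilde F\le\beta]_X$ forward by $P$, and use that $L$ supports $C_\beta$ along $U\cap L$. You are right that the step the paper asserts without justification, $P(K)\cap U=C_\beta\cap U$, is not automatic, since a point of $U\setminus C_\beta$ could a priori be the image of a point of $K\setminus E$. Lifting the argument to $X$ is the right way to handle it.

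\textbf{The gap.} Your lifting is left open (you say so yourself), and the recipe you sketch does not close it. Once $x_2$ is fixed, crossing $L$ forces $1-t>\delta/(a+\delta)$, where $a,\delta$ measure how far $z$ and $Px_2$ are from $L$. So $t$ cannot be sent to $1$. Staying in $\mathrm{int}\,E$, on the other hand, requires $(1-t)\|x_1-x_2\|$ to be smaller than the inradius of $E$ at $x_2$. Making $\delta$ small by moving $Px_2$ toward $y_0$ helps only if that inradius and $\|x_1-x_2\|$ stay controlled. An arbitrary choice of preimages $x_2$ does not guarantee this, so ``$Px_2$ close to $y_0$ relative to the distance of $z$ from $L$'' is not enough.

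\textbf{The fix: choose the preimage of $y_0$ first.}
\begin{enumerate}
\item Take $\rho>0$ with $B(y_0,\rho)\subset U$. Both open half-discs of $B(y_0,\rho)\setminus L$ miss $\partial C_\beta$, $C_\beta\subset H$, and $C_\beta=\overline{\mathrm{int}\,C_\beta}$; hence $B(y_0,\rho)\cap C_\beta=B(y_0,\rho)\cap H$.
\item Pick $e_0\in\mathrm{int}\,E$ with $Pe_0=y_0$ (possible since $U\subset\mathrm{int}\,C=P(\mathrm{int}\,E)$), and $r>0$ with $B(e_0,r)\subset E$.
\item Let $\ell$ be affine with $L=\{\ell=0\}$ and $H=\{\ell\le0\}$, put $a:=\ell(z)>0$, and choose $w\in X$ with $\ell(y_0+Pw)=-1$.
\item For small $s>0$, the point $x_2:=e_0+sw$ lies in $E$, and $Px_2\in B(y_0,\rho)\cap H\subset C_\beta$, with $\ell(Px_2)=-s$. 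Hence $x_2\in K$.
\item Take $1-t=2s/(a+s)$. The point $x_s:=(1-t)x_1+tx_2\in K$ satisfies $\ell(Px_s)=s>0$ and $\|x_s-e_0\|\le\frac{2s}{a}\bigl(\|x_1-e_0\|+s\|w\|\bigr)+s\|w\|$, which tends to $0$ with $s$.
\item So for small $s$ we get $x_s\in B(e_0,r)\subset E$ and $Px_s\in B(y_0,\rho)\setminus H\subset U\setminus C_\beta$. Then $\tilde F(x_s)=f(Px_s)>\beta$, contradicting $x_s\in K$.
\end{enumerate}
A single segment suffices, so the neighbourhood $V$ and $\mathrm{conv}(\{x_1\}\cup V)$ are unnecessary, and continuity of $f$ plays no role. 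With this inserted, your proof is complete, and it supplies the detail the paper's ``easy convexity argument'' omits.
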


\begin{proof}
Since $P$ is an open mapping, $C$ is a convex body in $Y$ and $\mathrm{int}_Y C=\mathrm{int}_Y P(E) =  P(\mathrm{int}_X E)$
(by Fact~\ref{F:convex} and Lemma~\ref{L:immagine}).  
The convex set 
$\tilde{C}_\beta:=P([\tilde F\leq \beta]_X)$ contains $C_\beta$, and $\tilde{C}_\beta\cap U=C_\beta\cap U$. 
Since the line $L$ supports $C_\beta$ at the points of $U$,
an easy convexity argument shows that
$L$ supports $\tilde{C}_\beta$ as well. This proves \eqref{A}.
	
	Let us prove \eqref{B}. Given $\eta>0$,
	let $y_0\in \mathrm{int}_Y C_\alpha$ and $z_0\in Y\setminus H$
be such that $\|z_0-y_0\|= d(L,C_\alpha)+\eta$.
	Let $v_0\in \mathrm{int}_X E$ be such that $y_0=P(v_0)$, and notice that $\tilde F(v_0)=f(y_0)\leq \alpha$. Observe that, since $\theta B_Y\subset P(B_X)$, there exists $w_0\in X$ such that $\|w_0-v_0\|\leq \frac{d(L,C_\alpha)+\eta}{\theta}$ and $P(w_0)=z_0$. By \eqref{A} and since $z_0\in X\setminus H$, we have $\tilde F(w_0)>\beta$. Hence,
	$$\omega_{\tilde F}\left(\frac{d(L, C_\alpha)+\eta}{\theta}\right)\geq \tilde F(w_0)-\tilde F(v_0)>\beta-\alpha.$$
\end{proof}

\begin{theorem}\label{no_lipQC}
	Let $X$ be an arbitrary normed space of dimension at least 2,  and let $E\subset X$ be a convex body.
	Then there exists a Lipschitz QC function $F\colon E\to\R$ that does not admit any 
Lipschitz QC extension to the whole $X$.
\end{theorem}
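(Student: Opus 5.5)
The plan is to reduce to the plane via Lemma~\ref{lemma: quoziente dimensione 2}, and then build a Lipschitz QC function on a two-dimensional convex body in which consecutive sublevel sets with a fixed gap in values are separated by arbitrarily small distances from the relevant supporting lines.

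\emph{Step 1: reduction.} Choose a two-dimensional quotient $P\colon X\to Y=\R^2$ such that $C:=\overline{P(E)}\neq Y$. To do this, pick a supporting functional $h$ of $E$ at a boundary point and let $P$ be a bounded surjection with $\mathrm{Ker}\,P\subset\mathrm{Ker}\,h$ of codimension two. Then $h$ factors through $P$, so $P(E)$ lies in a half-plane. Surjectivity and openness give some $\theta>0$ with $\theta B_Y\subset P(B_X)$. By Lemma~\ref{L:immagine}, $C$ is a convex body in $Y$.

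\emph{Step 2: the planar construction.} Fix a boundary point $c\in\partial C$, a supporting line $\ell$ at $c$, and a point $y_0\in\mathrm{int}\,C$. Construct a continuous QC function $f\colon C\to\R$ with nested convex bodies $C_{n}=[f\le n]_C$, $n\ge1$, satisfying the following:
\begin{itemize}
\item $f$ is Lipschitz with a constant $K$ independent of $n$;
\item the set $C_{n+1}$ has a flat piece of boundary on a line $L_n$ that meets $U_n\subset\mathrm{int}\,C$ in a relatively open segment, as required in Lemma~\ref{lemma: quoziente dimensione 2}(4);
\item $d(L_n,C_n)\le \delta_n$ with $\delta_n\to0$ faster than the value gap, which is $1$.
\end{itemize}
Then Lemma~\ref{lemma: quoziente dimensione 2}(b) gives $\omega_{\tilde F}((\delta_n+\eta)/\theta)>1$ for every QC extension $\tilde F$ of $F=f\circ P$. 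Letting $n\to\infty$ shows $\omega_{\tilde F}(t)>1$ for all small $t>0$, so $\tilde F$ is not even uniformly continuous, let alone Lipschitz.

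This is too strong to be achievable: near the line the sublevel sets of a $K$-Lipschitz $f$ on $C$ are separated by at least $1/K$ inside $C$. So I would aim for the weaker ratio condition
\[
\frac{\beta_n-\alpha_n}{d(L_n,C_{\alpha_n})}\to\infty
\]
using sublevel values $\alpha_n<\beta_n$. Here $C_{\alpha_n}$ reaches close to $L_n$ outside $C$, or rather at points of $C$ near $\partial C$, where the Lipschitz constraint inside $C$ does not bind because the short path leaves $C$. The model is the "cone" construction of Theorem~\ref{T:old}(a). Near the boundary point $c$, take a small triangle-like region. Let $C_{\beta_n}$ have a boundary edge on a line $L_n$ through a point near $c$ at angle $\phi_n\to0$ relative to $\ell$. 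Let $C_{\alpha_n}$ contain a point $p_n\in C$ close to $\ell$ on the other side, so that $d(p_n,L_n)\approx\phi_n r_n$, while the intrinsic distance inside $C$ from $p_n$ to $[f\ge\beta_n]$ is about $r_n$. Then taking $\beta_n-\alpha_n\approx r_n/K'$ gives the ratio $\approx 1/(K'\phi_n\theta)\to\infty$. The bound $\omega_{\tilde F}(s)\le Ls$ then fails for every $L$. For this I would use Lemma~2.9 of \cite{DEVEqc_examples} to realize prescribed nested open convex sets as strict sublevel sets of a Lipschitz QC function, provided a uniform "$\eta$-dilation" condition $C\cap\eta B_n\subset B_{n+1}$ holds. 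This is exactly how Theorems~\ref{noQC} and~\ref{T:nonR} were done.

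\emph{Main obstacle.} The hard step is choosing these sets so that Lemma~2.9's dilation hypothesis (which is what controls the Lipschitz constant) holds uniformly, while the angles $\phi_n\to0$ and the flat edges still lie inside $\mathrm{int}\,C$ for an arbitrary, possibly curved, boundary. I would first handle the case where $C$ is locally a half-plane near $c$. For general $C$, I would instead pick the lines $L_n$ as chords through interior points converging to $c$, exploiting only the local convexity of $\partial C$ near $c$.
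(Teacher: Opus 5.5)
\paragraph{Gap: the planar construction is missing.}
Your Step~1 and your use of Lemma~\ref{lemma: quoziente dimensione 2}(b) agree with the paper. You also correctly isolate what is needed: nested sublevel sets $C_{\alpha_n}\subset C_{\beta_n}$, a flat edge of $\partial C_{\beta_n}$ on a line $L_n$ inside $\mathrm{int}\,C$, and $(\beta_n-\alpha_n)/d(L_n,C_{\alpha_n})\to\infty$ with $f$ Lipschitz. But Step~2 stops where the proof should begin. You never construct such sets and such an $f$ for an arbitrary planar convex body, and you yourself call this the ``main obstacle''.

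The missing idea is that one branch of $\partial C$ is flat to within $o(r)$ at small scales $r$. The paper normalizes so that $0\in\partial C$, $C\subset\{v\ge0\}$, and $\partial C$ near $0$ is the graph of a convex nondecreasing $g$ on $[0,\epsilon]$ with $g(0)=0$. Then $\delta(z)=\frac{g(z)}{2}-g(z/2)\ge0$ satisfies $\sum_k 2^k\delta(\epsilon 2^{-k})\le g(\epsilon)/2$, so $2^k\delta(\epsilon 2^{-k})\to0$. Equivalently, $g(z)/z$ converges as $z\to0^+$, i.e.\ a one-sided tangent exists.

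With $2^k\alpha_k\to0$, the line $l_{k+1}$ through the interior point $(0,\alpha_{k+1})$ and $P_{k+1}=(\epsilon 2^{-k-1},g(\epsilon 2^{-k-1}))$ passes within $2\delta(\epsilon 2^{-k})+\alpha_{k+1}=o(2^{-k})$ of $P_k$. Meanwhile the sets $D_k$ (points of $C$ above $l_k$ with $u\ge\frac{3}{4}\epsilon 2^{-k}$) satisfy $d(C\setminus D_{k+1},D_k)\ge\epsilon 2^{-k-2}$. This separation lets one write $f$ explicitly: constant on $D_0$, and $\beta_k+d(\cdot,D_k)$ on collars of width $\epsilon 2^{-k-2}$. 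The resulting $f$ is $1$-Lipschitz and QC, with $[f\le\beta_k]_C=D_k$ and $\beta_{k+1}-\beta_k=\epsilon 2^{-k-2}$. The ratio is then of order $2^{-k}/o(2^{-k})$, and Lemma~2.9 is not needed at all. Your last sentence (chords through interior points converging to $c$) points this way, but without the $o(r)$ estimate there is no argument.

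\paragraph{Two parts of your sketch would fail.}
First, with an arbitrary $c\in\partial C$ and an arbitrary supporting line $\ell$, the heuristic $d(p_n,L_n)\approx\phi_n r_n$ is false at corners. For $C=\{v\ge|u|\}$, $c=0$, $\ell=\{v=0\}$, every $p\in C$ with $\|p\|=r$ lies at distance at least $r/\sqrt{2}$ from $\ell$. So lines near $c$ at a small angle to $\ell$ stay at distance of order $r$ from $p$. You must work along a one-sided tangent, as the paper does by following a single branch $g$.

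Second, a \emph{uniform} dilation condition $C\cap\eta B_n\subset B_{n+1}$ (center $o$, $B(o,r)\subset B_1$) cannot hold in general:
\begin{itemize}
\item Comparing Minkowski gauges of $B_n-o$ gives $d(B_n,C\setminus B_{n+1})\ge r(\eta-1)$ for all $n$.
\item For a Euclidean disc, a line meeting $\mathrm{int}\,C$ that passes within $d$ of a point of $C$ leaves points of $C$ on the opposite side within $O(\sqrt{d})$ of that point. Hence $d(L_n,C_{\alpha_n})$ stays bounded below.
\item Since $f$ is bounded on the disc, $\beta_n-\alpha_n$ is bounded, so the ratio cannot blow up.
\end{itemize}
The separations must shrink together with the scales, and the Lipschitz constant must then be controlled directly, which is what the paper's explicit collar formula does.
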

 
\begin{proof}
	Let $x^*\in X^*\setminus\{0\}$ be any supporting functional of $E$ and let $Z$ be a closed 
subspace of codimension $1$ in $\ker(x^*)$. 
Let  $Y$ be the quotient $X/Z$ and let $P\colon X\to Y$ be the corresponding quotient map. 
Since $Y$ is $2$-dimensional, it can be identified with the euclidean $\R^2$.
Consider the convex set $C:=\overline{P(E)}$ 
which is clearly different from the whole $Y=\R^2$. So $C$ is a convex body in $Y$. 	
Without any loss of generality, we can suppose that:
$$
(0,0)\in\partial C\,,\quad
C\subset \{(u,v)\in\R^2: v\geq 0\}\,,\quad
(0,1)\in\inte C\,.
$$
Let $\theta>0$ be such that $\theta B_Y\subset P(B_X)$.
Notice that there exists $\epsilon\in(0,1)$ such that the set $\partial C\cap\bigl([0,\epsilon]\times[0,1]\bigr)$ 
coincides with the graph of  a convex continuous non-decreasing function $g\colon [0,\epsilon]\to [0,1]$ with $g(0)=0$. 
For $z\in[0,\epsilon]$, notice that  $\delta(z):=\frac{g(z)}{2} -g(\frac{z}{2})\equiv \frac{g(0)+g(z)}{2} -g(\frac{0+z}{2})\ge 0$.
Since  $g(z)=2\delta(z)+2 g(\frac{z}2)$, an easy inductive argument gives that for each integer $n\ge0$,
$$
g(\epsilon)=2\sum_{k=0}^n2^k\delta\left(\frac{\epsilon}{2^k}\right)+2^{n+1}g\left(\frac{\epsilon}{2^{n+1}}\right)
\geq 2\sum_{k=0}^n2^k\delta\left(\frac{\epsilon}{2^k}\right).
$$
Thus the series $\sum_{k=0}^{\infty}2^k\delta\left(\frac{\epsilon}{2^k}\right)$ converges and hence 
$2^k\delta\left(\frac{\epsilon}{2^k}\right)\to 0$ as $k\to\infty$.
Now, let $\{\alpha_k\}_{k\in\N\cup\{0\}}\subset(0,1)$ be a sequence such that ${2^k}{\alpha_k}\to0$, and define
$$\textstyle D_k=\left\{(u,v)\in C:\; u\geq \frac34\frac{\epsilon}{2^k},\; v\geq \alpha_k +\frac{2^k}{\epsilon}\left(g\left(\frac{\epsilon}{2^k}\right)-\alpha_k\right)u\right\},
\quad \text{for $k\in\N\cup\{0\}$.}
$$
Notice that:
\begin{itemize}
	\item $\left\{(u,v)\in C:\; u\geq \frac{\epsilon}{2^k} \right\}\subset D_k\subset\left\{(u,v)\in C:\; u\geq \frac34\frac{\epsilon}{2^k} \right\}$;
	\item $D_k\subset D_{k+1}$ and $\bigcup_{k\in\N} D_k=\left\{(u,v)\in C: u>0 \right\}$;
	\item  if we consider the line 
$$\textstyle l_k=\left\{(u,v)\in\R^2:\;  v= \alpha_k +\frac{2^k}{\epsilon}\left(g\left(\frac{\epsilon}{2^k}\right)-\alpha_k\right)u\right\},$$
	and the points $P_k:=\left( \frac{\epsilon}{2^{k}},g\left(\frac{\epsilon}{2^{k}}\right)\right)$ and
	$$\textstyle 
Q_k:=\left( \frac{\epsilon}{2^{k}},2g\left(\frac{\epsilon}{2^{k+1}}\right)-\alpha_{k+1}\right)=
\left( \frac{\epsilon}{2^{k}},g\left(\frac{\epsilon}{2^{k}}\right)-2\delta\left(\frac{\epsilon}{2^{k}}\right)-\alpha_{k+1}\right),$$ 
	then $P_k\in D_k$, $Q_k\in l_{k+1}$ and
\begin{equation}\label{eq: distanza o-piccolo}
\textstyle d\bigl(l_{k+1}, D_k\bigr)\leq d\bigl(Q_k, P_k\bigr)= 2\delta\left(\frac{\epsilon}{2^{k}}\right)+\alpha_{k+1};
	\end{equation}
	\item $d\bigl(C\setminus D_{k+1}, D_k\bigr)\geq \frac34 \frac\epsilon{2^k}-\frac\epsilon{2^{k+1}} =\frac{\epsilon}{2^{k+2}}$.
\end{itemize}
Let $f\colon C\to\R$ be defined by: 
$$\textstyle{f}(x)=\begin{cases} 0 &\text{if $x\in D_0$}, \\
	\frac\epsilon4\left(2-\frac{1}{2^{k-1}}\right)+d(x,D_k) &\text{if $k\ge0$, $ x\not\in D_k$, $d(x,D_k)\leq \frac{\epsilon}{2^{k+2}}$,} \\
	\frac\epsilon4\left(2-\frac{1}{2^{k}}\right) &\text{if $k\ge0$, $x\in D_{k+1}$, $d(x,D_k)> \frac{\epsilon}{2^{k+2}}$,} \\
	\frac\epsilon2 &\text{if $u\leq 0$.}  
\end{cases}$$
It is not difficult to see that $f$ is a quasiconvex 1-Lipschitz function such that 
for $\beta_k=\frac\epsilon4\left(2-\frac{1}{2^{k-1}}\right)$ ($k\ge0$)
one has
$$\left[{f}\leq\beta_k \right]_C=D_{k}.$$
Now, let $F\colon E\to\R$ be the Lipschitz quasiconvex function defined by $F(x)=f\bigl(P(x)\bigr)$.
It remains to show that $F$ cannot be extended to a 
Lipschitz quasiconvex function $\tilde F\colon X\to\R$.
Proceeding by contradiction, assume that such $\tilde F$ exists and let $K>0$ be such that $\omega_{\tilde F}(t)\leq Kt$ whenever $t\geq0$. 
Fix $k\in\N$. An application of Lemma~\ref{lemma: quoziente dimensione 2} with
$$L=l_{k+1},\quad \alpha=\beta_k,\quad \beta=\beta_{k+1},\quad \eta=d\bigl(l_{k+1}, D_k\bigr)$$
yields 
$$\textstyle \omega_{\tilde F}\left(\frac{2d\left(l_{k+1}, D_k\right)}{\theta}\right)\geq \frac{\epsilon}{2^{k+2}}.$$
By \eqref{eq: distanza o-piccolo} we have
$$\textstyle 2K\frac{2\delta\left(\frac{\epsilon}{2^{k}}\right)+\alpha_{k+1}}{\theta}\geq\omega_{\tilde F}\left(2\frac{2\delta\left(\frac{\epsilon}{2^{k}}\right)+\alpha_{k+1}}{\theta}\right)\geq \frac{\epsilon}{2^{k+2}},$$
and hence 
$$\textstyle 2^k\left(2\delta\left(\frac{\epsilon}{2^{k}}\right)+\alpha_{k+1}\right)\geq\ \frac{\theta\epsilon}{8K}.$$
But this is a contradiction since the left-hand term tends to $0$ as $k\to\infty$.
\end{proof}

\begin{figure}
	\centering

	\begin{tikzpicture}[scale=1]
		\draw[->](-3,0)--(9,0)node[below]{\small{\( u \)}};
		\draw[->](0,-1)--(0,6.5)node[right]{\small{\( v \)}};

	\draw[convexset]
(-3,6) .. controls + (0.1,-0.5) and + (-.3,0.5) ..(-2,1) .. controls + (0.3,-0.5) and + (-0.5,0) .. (-1,0)  .. controls + (.5,0) and + (-0.4,0) ..(0,0) .. controls + (1.5,0) and + (-1,-0.4) .. (4,1) .. controls + (1,0.4) and + (-.5,-1) ..(8,5).. controls + (.5,1) and + (-.5,-1) ..(8.5,6);

\draw[line width = .7 pt]
(-3,6) .. controls + (0.1,-0.5) and + (-.3,0.5) ..(-2,1) .. controls + (0.3,-0.5) and + (-0.5,0) .. (-1,0)  .. controls + (.5,0) and + (-0.4,0) ..(0,0) .. controls + (1.5,0) and + (-1,-0.4) .. (4,1) .. controls + (1,0.4) and + (-.5,-1) ..(8,5).. controls + (.5,1) and + (-.5,-1) ..(8.5,6);

	\draw (8,0) circle[radius=1pt];
\fill (8,0) circle[radius=1pt];

\draw (4,0) circle[radius=1pt];
\fill (4,0) circle[radius=1pt];

\draw (8,5) circle[radius=1pt];
\fill (8,5) circle[radius=1pt];
	\node at (8.3,4.7) {\small{$P_k$}};

\node at (7,5.5) {{$D_k$}};

\draw (0,.5) circle[radius=1pt];
\fill (0,.5) circle[radius=1pt];
	\node at (-.3,.5) {\small{$\alpha_k$}};

\draw (0,.2) circle[radius=1pt];
\fill (0,.2) circle[radius=1pt];
	\node at (-.5,.2) {\small{$\alpha_{k+1}$}};

	\draw[dashed,thick, line width = 0.2 pt](0,.5)--(6,3.875);
	\draw[dashed,thick, line width = 0.2 pt](6,0)--(6,3.875);
	\draw[dashed,thick, line width = 0.2 pt](3,0)--(3,.8);
	\draw[dashed,thick, line width = 0.2 pt](4,0)--(4,1);
	\draw[dashed,thick, line width = 0.2 pt](8,0)--(8,5);
\draw[-,thick, line width = 0.7 pt](6,3.875)--(8,5);

\draw[-,thick, line width = 0.7 pt](6,3.875)--(6,6);

\draw[dashed,thick, line width = 0.2 pt](0,.2)--(3,.8);

\draw[-,thick, line width = 0.7 pt](3,.8)--(4,1);

\draw[-,thick, line width = 0.7 pt](3,.8)--(3,6);
\draw[dashed,thick, line width = 0.2 pt](4,1)--(9,2);

\draw (8,1.8) circle[radius=1pt];
\fill (8,1.8) circle[radius=1pt];
\node at (8.3,1.5) {\small{$Q_{k}$}};
\node at (9.5,2) {\small{$l_{k+1}$}};

\node at (4,4) {{$D_{k+1}$}};

\draw (4,1) circle[radius=1pt];
\fill (4,1) circle[radius=1pt];

	\node at (4.5,.7) {\small{$P_{k+1}$}};

\node at (4,-.4) {\small{$\frac{\epsilon}{2^{k+1}}$}};

\node at (8,-.4) {\small{$\frac{\epsilon}{2^{k}}$}};

\node at (6,-.4) {\small{$\frac34\frac{\epsilon}{2^{k}}$}};

\draw (6,0) circle[radius=1pt];
\fill (6,0) circle[radius=1pt];

\draw (3,0) circle[radius=1pt];
\fill (3,0) circle[radius=1pt];
\node at (3,-.4) {\small{$\frac34\frac{\epsilon}{2^{k+1}}$}};

	\end{tikzpicture}

\caption{Construction of the sets $D_k$}\label{FIG-Dk}
\end{figure}
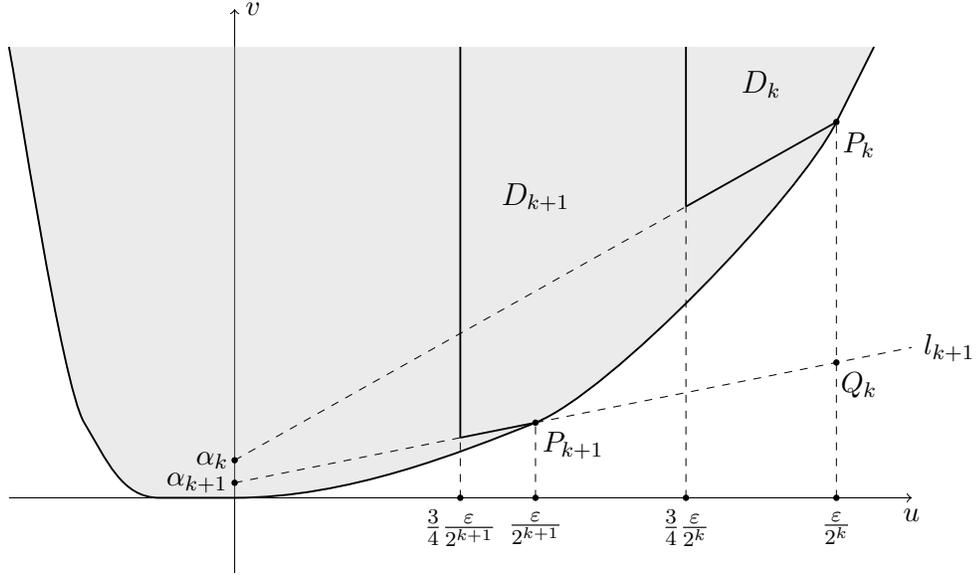

\medskip

%%%%%%%%%%%%%%%%%%%%%%%%%%%%%%%

\section{Convex bodies with no asymptotic directions}

The present section contains results about convex bodies without asymp\-to\-tic directions
that will be needed for our positive results concerning extendability of QC functions. 
Let us remark that closed convex sets without
asymptotic directions were considered and studied by D.~Gale and V.~Klee
\cite{GaleKlee} under the name ``continuous convex bodies''. Their terminology
comes from the fact that such convex sets $K\subset X$ are characterized by the property that
their support function $h\mapsto \sup h(K)$ is continuous as a mapping from
the dual unit sphere of $X$ into $\R\cup\{+\infty\}$. Moreover,
the class of such sets is closed under finite algebraic sums and under
convex hulls of finite unions.

Let us start with an elementary lemma. Let us recall that, given an open interval $I\subset\R$ and 
a convex function $\phi\colon I\to\R$, the left 
and right derivatives $\phi'_-$ and $\phi'_+$ exist on $I$ and satisfy $\phi'_-(t_1)\leq\phi'_+(t_1)\leq\phi'_-(t_2)$, whenever $t_1,t_2\in I$ and $t_1<t_2$.

\begin{lemma}\label{L:cv}
Let $-\infty\le a<+\infty$, and let $\phi\colon(a,+\infty)\to\R$ be a convex function.
Then
\begin{equation}\label{E:cv}
\lim_{t\to+\infty}\frac{\phi(t)}{t}=\lim_{t\to+\infty}\phi'(t)\,,
\end{equation}
where $\phi'(t)$ denotes an arbitrary element of $\partial\phi(t)\equiv[\phi'_-(t),\phi'_+(t)]$.
\end{lemma}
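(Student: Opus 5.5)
We would prove Lemma~\ref{L:cv} by exploiting the monotonicity of the one-sided derivatives together with the integral representation of a convex function. The first step is to observe that $\phi'_+$ is non-decreasing on $(a,+\infty)$, so $\ell:=\lim_{t\to+\infty}\phi'_+(t)$ exists in $(-\infty,+\infty]$. The inequalities $\phi'_+(t_1)\le\phi'_-(t_2)\le\phi'(t_2)\le\phi'_+(t_2)$ for $t_1<t_2$ show that any selection $\phi'(t)\in\partial\phi(t)$ is squeezed between $\phi'_+$ at smaller points and $\phi'_+$ at the same point. Hence $\lim_{t\to+\infty}\phi'(t)=\ell$, independently of the selection. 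This settles that the right-hand side of \eqref{E:cv} is well defined.

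Next we fix $t_0>\max\{a,0\}$ (if $a=-\infty$ simply take $t_0>0$). Since $\phi$ is locally Lipschitz on the open interval, it is absolutely continuous on $[t_0,t]$, and
$$
\phi(t)=\phi(t_0)+\int_{t_0}^t\phi'_+(s)\,ds,\qquad t>t_0 .
$$
Dividing by $t$ gives
$$
\frac{\phi(t)}{t}=\frac{\phi(t_0)}{t}+\frac{1}{t}\int_{t_0}^t\phi'_+(s)\,ds .
$$
It then remains to show that Cesàro-type averages of a monotone function converge to its limit.

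If $\ell<+\infty$, then for $\varepsilon>0$ we choose $T>t_0$ with $\ell-\varepsilon\le\phi'_+(s)\le\ell$ for $s\ge T$. Splitting the integral at $T$, the part over $[t_0,T]$ is a fixed constant divided by $t$, while the part over $[T,t]$ lies between $(\ell-\varepsilon)(t-T)/t$ and $\ell(t-T)/t$. Letting $t\to\infty$ and then $\varepsilon\to0$ yields $\phi(t)/t\to\ell$.

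If $\ell=+\infty$, then for any $M$ we have $\phi'_+\ge M$ on $[T,\infty)$ for some $T$. This gives $\liminf\phi(t)/t\ge M$, so $\phi(t)/t\to+\infty=\ell$.

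Alternatively, one could avoid integration by using the chord inequalities $\phi'_+(t_0)\le\frac{\phi(t)-\phi(t_0)}{t-t_0}\le\phi'_-(t)$ together with the monotonicity of the difference quotient in $t$. The integral route is cleaner, however.

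No step is a serious obstacle. The only points needing care are the case $\ell=+\infty$, the handling of $a=-\infty$ or $a<0$ (fixed by choosing $t_0>0$), and the remark that the limit does not depend on the chosen subgradient.
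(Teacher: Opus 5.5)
Your proof is correct, but its main line differs from the paper's, which is essentially the chord-slope alternative you mention at the end.

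The paper fixes $t_0>a$ and uses that $t\mapsto\frac{\phi(t)-\phi(t_0)}{t-t_0}$ is non-decreasing, with values in $[\phi'(t_0),\phi'(t_1)]$ on $(t_0,t_1)$. It then writes $\frac{\phi(t)}{t}=\frac{\phi(t)-\phi(t_0)}{t-t_0}\cdot\frac{t-t_0}{t}+\frac{\phi(t_0)}{t}$. Monotonicity gives existence of $\lim_{t\to+\infty}\phi(t)/t$ for free, together with $\phi'(t_0)\le\lim_{t\to+\infty}\phi(t)/t\le s:=\sup_{t>a}\phi'(t)$. Letting $t_0\to+\infty$ finishes the proof, with no distinction between $s$ finite and $s=+\infty$.

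Your route imports a heavier fact: absolute continuity of $\phi$ and the representation $\phi(t)=\phi(t_0)+\int_{t_0}^t\phi'_+$. You then prove by hand, with an $\varepsilon$-argument and a separate case $\ell=+\infty$, the Ces\`aro-type convergence that the monotone difference quotient gives automatically. What you gain is conceptual transparency: $\phi(t)/t$ is visibly an average of $\phi'_+$ up to an $O(1/t)$ term. The lemma then becomes an instance of ``averages of a function with a limit at $+\infty$ have the same limit''.

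Your closing claim that the integral route is cleaner is therefore debatable. The chord-slope route is shorter and uses only the three-slope inequality. Your squeeze argument for independence of the chosen subgradient is fine; the paper gets the same thing by noting that any selection of $\partial\phi$ is itself non-decreasing.
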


\begin{proof}
By convexity of $\phi$, $\phi'$ is non-decreasing and hence the second limit in \eqref{E:cv} exists and 
equals $s:=\sup_{t>a}\phi'(t)$. Also the first limit in \eqref{E:cv} exists. To see this,
fix $t_0>a$. The function $t\mapsto\frac{\phi(t)-\phi(t_0)}{t-t_0}$ is non-decreasing,
and for any $t_1>t_0$ its values in $(t_0,t_1)$ 
lie in $[\phi'(t_0),\phi'(t_1)]$. Moreover, 
$$
\frac{\phi(t)}{t}=\frac{\phi(t)-\phi(t_0)}{t-t_0}\cdot\frac{t-t_0}{t}+\frac{\phi(t_0)}{t}.
$$
Hence the first limit in \eqref{E:cv} exists, and
$$
\phi'(t_0)\le\lim_{t\to+\infty}\frac{\phi(t)}{t}\le\lim_{t\to+\infty}\phi'(t)=s.
$$
Since this holds for every $t_0>a$, the rest  follows easily.
\end{proof}

\begin{corollary}\label{C:adir}
Let $C\subset X$ be an unbounded convex body, $x_0\in X\setminus(\mathrm{int}\,C)$, 
and let $v\ne0$ be
such that $v$ is not an asymptotic direction for $C$ and 
$(x_0+\R_+v)\cap(\mathrm{int}\,C)=\emptyset$. Then
$$
\lim_{t\to+\infty}\frac{d(x_0+tv,C)}{t}>0\,.
$$
\end{corollary}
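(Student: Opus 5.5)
Set $\phi(t):=d(x_0+tv,C)$ for $t\ge 0$. Since $C$ is convex, the distance function $d(\cdot,C)$ is convex and $1$-Lipschitz on $X$. Hence $\phi$ is a convex, finite, nonnegative function on $[0,+\infty)$. Restricting it to $(0,+\infty)$ and applying Lemma~\ref{L:cv}, the limit $\lambda:=\lim_{t\to+\infty}\phi(t)/t$ exists and equals $\lim_{t\to+\infty}\phi'(t)=\sup_{t>0}\phi'(t)$. Since $\phi\ge0$, we get $\lambda\ge0$. So it remains to exclude $\lambda=0$, and we argue by contradiction, assuming $\lambda=0$.

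If $\lambda=0$, then $\phi'(t)\le 0$ for every $t>0$, because $\phi'$ is non-decreasing with supremum $0$. Thus $\phi$ is non-increasing on $(0,+\infty)$ and bounded below by $0$, so $\phi(t)\to\ell\ge0$ as $t\to+\infty$. If $\ell=0$, then $x_0$ and $v$ satisfy both conditions in \eqref{D:ad}. Indeed, $(x_0+\R_+v)\cap\mathrm{int}\,C=\emptyset$ by hypothesis and $d(x_0+tv,C)\to0$. So $v$ is an asymptotic direction for $C$, which is a contradiction.

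The main step is therefore the case $\ell>0$, where I shift the base point of the half-line toward $C$. Pick $c_t\in C$ with $\|x_0+tv-c_t\|=\phi(t)$; this is possible by finite dimensionality. As in the Observation, $c_t/t\to v$, so by \cite[Theorem~8.2]{Rock} we get $v\in\mathrm{recc}(C)$. Now consider the translated half-lines $x_s:=x_0+s(c-x_0)$ for $s\in[0,1]$ and a fixed $c\in\mathrm{int}\,C$. The function $s\mapsto\lim_{t}d(x_s+tv,C)$ is convex in $s$, being a limit of convex functions of $s$ that are non-increasing in $t$ since $v\in\mathrm{recc}(C)$. It equals $\ell>0$ at $s=0$ and $0$ at $s=1$.

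Let $s^*:=\sup\{s\in[0,1]:(x_s+\R_+v)\cap\mathrm{int}\,C=\emptyset\}$, so that $s^*<1$. The set of such $s$ is closed, because $\mathrm{int}\,C+\R_+(-v)$ is open. Hence the half-line through $x_{s^*}$ misses $\mathrm{int}\,C$, while for $s>s^*$ the half-line through $x_s$ meets $\mathrm{int}\,C$. For those $s$ the limit distance is $0$, because $v\in\mathrm{recc}(C)$ and the half-line eventually stays in $C$. The limit function is also $1$-Lipschitz in $s$ up to the factor $\|c-x_0\|$, since each $d(x_s+tv,C)$ is. By this continuity its value at $s^*$ is $0$. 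Therefore $x_{s^*}$ and $v$ satisfy \eqref{D:ad}, so $v$ is an asymptotic direction for $C$, again a contradiction.

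The delicate point I expect is this last step. One must check that the limit of the distances at the boundary parameter $s^*$ is indeed $0$, which uses the equi-Lipschitz dependence on $s$, and that the half-line through $x_{s^*}$ still misses $\mathrm{int}\,C$. Both follow from the Lipschitz and openness remarks above. Hence $\lambda>0$.
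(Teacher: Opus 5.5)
Your proof is correct, but the key step differs from the paper's. Both arguments use Lemma~\ref{L:cv} to reduce the claim to ruling out the case where $\phi$ is non-increasing. In that case one produces a new base point showing that $v$ is an asymptotic direction.

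\begin{itemize}
\item The paper does this by compactness. It takes $t_n\to+\infty$ and $c_n\in C$ with $\|c_n-(x_0+t_nv)\|\to\delta:=\inf\phi(\R_+)$, and extracts a limit $w$ of $c_n-(x_0+t_nv)$. The translated half-line $x_0+w+\R_+v$ misses $\mathrm{int}\,C$, because $\|w\|$ equals the distance from the original half-line to $C$. Its distance to $C$ tends to $0$ along $t_n$, hence by convexity as $t\to+\infty$.
\item You instead first obtain $v\in\mathrm{recc}(C)$ from the boundedness of $\phi$. This makes $t\mapsto d(x_s+tv,C)$ monotone, and it forces the asymptotic distance $\psi(s)$ to vanish once the half-line from $x_s$ enters $\mathrm{int}\,C$.
\item You then run a connectedness argument on the segment from $x_0$ to $c$. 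The set of parameters whose half-line misses $\mathrm{int}\,C$ is closed, since $\mathrm{int}\,C-\R_+v$ is open. Together with the $\|c-x_0\|$-Lipschitz dependence of $\psi$, this locates a boundary parameter $s^*$ with $\psi(s^*)=0$.
\end{itemize}

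The paper's route is shorter. Yours avoids extracting a convergent subsequence of difference vectors and gets the full limit in $t$ directly from monotonicity. It also needs no compactness once $v\in\mathrm{recc}(C)$ is known. That fact follows from a direct convex-combination argument with approximate nearest points, without exact nearest points or \cite[Theorem~8.2]{Rock}.

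Two minor remarks. Your separate treatment of the case $\ell=0$ is unnecessary, since the sliding argument also covers it. The convexity of $\psi$ in $s$ is never used; only its continuity and its values at $s=0$ and $s=1$ matter.
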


\begin{proof}
The function $\phi(t):=d(x_0+tv,C)$ is continuous and convex on $\R_+$
and hence there exists $\ell:=\lim_{t\to+\infty}\phi(t)\in(0,+\infty]$.
(Notice that $\ell$ cannot be zero since $v$ is not an asymptotic direction
for $C$.) Denote $\delta:=\inf\phi(\R_+)=\mathrm{dist}(x_0+\R_+v, C)\le\ell$.
We claim that $\delta<\ell$.

To show this, assume the contrary. There exists a sequence $t_n\to+\infty$
such that $\phi(t_n)\to\ell$. In other words, denoting $y_n=x_0+t_nv$,
there exist $c_n\in C$ ($n\in\N$) with $\|c_n-y_n\|\to\ell=\delta\in(0,+\infty)$. 
By compactness, we can (and do) assume that $c_n-y_n\to w$. Let $x_1=x_0+w$.
Since $\|w\|=\delta$, $(x_1+\R_+v)\cap(\mathrm{int}\,C)=\emptyset$.
Moreover, $\|x_1+t_nv-c_n\|=\|w-(c_n-y_n)\|\to0$. But this is a contradiction
since $v$ is not an asymptotic direction for $C$.

Since $\delta<\ell$,
$\phi$
must be strictly increasing in a neighborhood of $+\infty$. By
Lemma~\ref{L:cv},  
$\lim_{t\to+\infty}\frac{\phi(t)}{t}=\sup_{t>0}\phi'(t)>0$.
We are done.
\end{proof}

\begin{lemma}\label{L:bdd_sublevels}
Let $\mathrm{dim}\,X=d\ge2$, and let $C\subset X$ be an unbounded convex body 
with no asymptotic directions. Let a nonzero functional $h\in X^*$ be bounded below on $C$.
Then:
\begin{enumerate}[(a)]
\item $h$ attains its infimum over $C$;
\item each sublevel set $C_\alpha:=[h\le\alpha]_C$ ($\alpha\in\R$) is bounded.
\end{enumerate}
\end{lemma}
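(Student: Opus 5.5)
We would prove (b) first and deduce (a) from it. The first step is a reduction to a statement about half-lines: it suffices to show that for every $\alpha\in\R$ no set $C_\alpha$ contains a half-line. Indeed, $C_\alpha$ is closed and convex, and in finite dimension a closed convex set is unbounded only if its recession cone is nontrivial. So if $C_\alpha$ is unbounded it contains $c+\R_+w$ for some $c\in C_\alpha$ and some $w\neq0$. Then $w\in\mathrm{recc}(C)$, and $h(w)\le 0$ because $h\le\alpha$ along the ray. Since $h$ is bounded below on $C$, we get $h(w)\ge0$. Hence $h(w)=0$.

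Fix such a $w\in\mathrm{recc}(C)\setminus\{0\}$ with $h(w)=0$, and put $m:=\inf h(C)>-\infty$. Take a point $x_0$ with $h(x_0)<m$, so $x_0\notin C$. The whole line $x_0+\R w$ lies in the hyperplane $[h=h(x_0)]$, which misses $C$; hence $(x_0+\R_+w)\cap\mathrm{int}\,C=\emptyset$. Since $w$ is not an asymptotic direction, Corollary~\ref{C:adir} gives $d(x_0+tw,C)\ge ct$ for some $c>0$ and all large $t$.

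We then contradict this growth. Pick $c_1\in C$. Since $w\in\mathrm{recc}(C)$, the points $c_1+tw$ lie in $C$, so
\[
d(x_0+tw,C)\le \|x_0+tw-(c_1+tw)\|=\|x_0-c_1\|,
\]
which is bounded in $t$. This contradicts linear growth, so no nonzero recession direction $w$ with $h(w)=0$ exists. Therefore every $C_\alpha$ is bounded, which proves (b).

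For (a), choose any $\alpha>m$, so that $C_\alpha\neq\emptyset$. By (b) the set $C_\alpha$ is compact, so $h$ attains its minimum on it, and that minimum equals $\inf h(C)$.

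The only delicate point is the application of Corollary~\ref{C:adir}, which needs $x_0\notin\mathrm{int}\,C$ and a ray disjoint from $\mathrm{int}\,C$. Choosing $x_0$ strictly below the infimum of $h$ on $C$ secures both conditions cleanly. The conclusion of the corollary, $\lim_{t\to+\infty} d(x_0+tw,C)/t>0$, then directly contradicts the boundedness of that distance. The argument thus never needs rotundity; it uses only the absence of asymptotic directions.
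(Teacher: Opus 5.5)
Your proof is correct, but it runs in the opposite order from the paper's and uses Corollary~\ref{C:adir} differently.

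The paper proves (a) first, independently of (b). Assuming $h$ does not attain $m=\inf h(C)$, it fixes $x_0$ on the hyperplane $h^{-1}(m)$, which then misses $C$. It finds unit vectors $v_n\in\ker h$ and $t_n\to+\infty$ with $d(x_0+t_nv_n,C)\to0$, and passes to a limit direction $v_n\to v$. It then contradicts Corollary~\ref{C:adir} for $v$ via the estimate $d(x_0+t_nv_n,C)/t_n\ge d(x_0+t_nv,C)/t_n-\|v_n-v\|$. For (b) it makes the same reduction as you: an unbounded $C_\alpha$ yields $v\in\mathrm{recc}(C)\setminus\{0\}$ with $h(v)=0$. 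It then uses (a): the nonempty set $C_m\subset\partial C$ contains a half-line $d+\R^+v$, so $v$ is an asymptotic direction directly by definition.

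You instead place the base point strictly below $m$. This lets you apply Corollary~\ref{C:adir} to the single fixed direction $w$ without knowing that the infimum is attained. The contradiction with the trivial bound $d(x_0+tw,C)\le\|x_0-c_1\|$ along a recession direction is immediate, and (a) follows by compactness of a nonempty $C_\alpha$.

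Your route is shorter and avoids the sequence-of-directions compactness argument altogether. In effect you show $\mathrm{recc}(C)\cap\ker h=\{0\}$, which yields both parts at once. The paper's proof of (a) has the small advantage of not needing the recession-cone description of unbounded closed convex sets. Its (b) needs only the definition of asymptotic direction, not the corollary.
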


\begin{proof}
(a) Denote $m:=\inf h(C)$ and $L:=h^{-1}(m)$. Proceeding by contradiction,
assume that $C\subset [h>m]_X$. So we have $C\cap L=\emptyset$ and $d(C,L)=0$.
Fix some $x_0\in L$. There exist sequences $\{v_n\}_n\in h^{-1}(0)\cap S_X$ 
and $\{t_n\}_n\subset(0,+\infty)$
such that $d(x_0+t_n v_n,C)\to 0$. Now, we must have $t_n\to+\infty$, since otherwise
an easy compactness argument would give a common point of $C$ and $L$. By
passing to a subsequence, we can (and do) assume that $v_n\to v\in h^{-1}(0)\cap S_X$.
Since $v$ is not an asymptotic direction, we must have
$(1/t_n)\,d(x_0+t_n v,C)\to\beta>0$. But then we get
$$
\frac{d(x_0+t_n v_n,C)}{t_n}\ge \frac{d(x_0+t_n v,C)}{t_n} - \|v_n-v\| \to\beta
$$
which implies that $d(x_0+t_n v_n,C)\to+\infty$, and this is a contradiction.

(b) We proceed as in the first part of the proof of Theorem~\ref{no_ucQC}. If (b) is false, there exists $\alpha>m$ with $C_\alpha$ unbounded.
Then $C_\alpha$ contains a half-line of the type $c+\R^+ v$.
Since $\R^+ h(v)=h(c+\R^+v)-h(c)\subset[m-\alpha,\alpha-m]$, we must have
$h(v)=0$. But then $C_m$ contains a half-line of the type $d+\R^+v$,
but this is a contradiction since $C_m\subset\partial C$ and $v$ is not an asymptotic 
direction for $C$.
\end{proof}

\begin{notation}
Given a set $E\subset X$, we define $\mathrm{cone}(E)=\R_+E$ (the cone
generated by $E$). For $x\in X$, we denote 
$$
c(x,E)=x+\mathrm{cone}(E -x),
$$
the cone with vertex $x$, generated by $E$. Notice that $c(0,E)=\mathrm{cone}(E)$.
\end{notation}

\begin{lemma}\label{L:Gamma}
Let $\mathrm{dim}\,X=d\ge2$ and let $C\subset X$ be an unbounded 
convex body with no asymptotic directions.
Then for each $z\in X\setminus C$, the cone $c(z,C)$ is closed and the set
$$
\Gamma_C(z)=C\cap\partial[c(z,C)]
$$
is nonempty and
bounded.
\end{lemma}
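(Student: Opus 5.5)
The heart of the argument is one observation, which I will prove first: \emph{for every $z\in X$ and every nonzero $v\in\mathrm{recc}(C)$, the half-line $z+\R_+v$ meets $\mathrm{int}\,C$.} Suppose it does not. Then $z\notin\mathrm{int}\,C$, and $v$ is not an asymptotic direction for $C$ by hypothesis. So Corollary~\ref{C:adir} gives $\lim_{t\to+\infty}d(z+tv,C)/t>0$, that is, $d(z+tv,C)$ grows at least linearly in $t$. On the other hand, fix any $c\in C$. Since $v\in\mathrm{recc}(C)$, we have $c+tv\in C$ for all $t\ge0$, hence $d(z+tv,C)\le\|z-c\|$ is bounded. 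This is a contradiction. In particular, for $z\notin C$, every such $v$ satisfies $tv\in C-z$ for some $t>0$, so
$$
\mathrm{recc}(C)\subset\mathrm{cone}(C-z).
$$
I expect this step to be the main obstacle; everything else is routine compactness.

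\emph{Closedness of $c(z,C)$.} It suffices to show that $K_0:=\mathrm{cone}(C-z)$ is closed. Let $\lambda_n(c_n-z)\to w$ with $\lambda_n\ge0$ and $c_n\in C$. Note that $\|c_n-z\|\ge d(z,C)>0$. If $\{c_n\}$ is bounded, then $\{\lambda_n\}$ is bounded too, and after passing to subsequences we get $\lambda_n\to\lambda$ and $c_n\to c\in C$. Hence $w=\lambda(c-z)\in K_0$. If $\{c_n\}$ is unbounded, pass to a subsequence with $\|c_n-z\|\to\infty$ and $(c_n-z)/\|c_n-z\|\to u$. Then $u\in\mathrm{recc}(C)$ by \cite[Theorem~8.2]{Rock}. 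Moreover $\lambda_n\|c_n-z\|\to\mu\ge0$, so $w=\mu u$. By the inclusion above, $w\in K_0$. So $K:=c(z,C)$ is a closed convex cone with vertex $z$.

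\emph{Nonemptiness of $\Gamma_C(z)$.} Strictly separate $z$ from $C$ by a functional $h$ with $h(z)<\inf h(C)$. Then $K\subset[h\ge h(z)]_X$, so $K$ is a proper closed convex cone. It has nonempty interior, since $\mathrm{int}\,C\subset\mathrm{int}\,K$. Because $d\ge2$, $\partial K$ contains a point $x\neq z$. By closedness, $x-z\in K_0$, so $x=z+\lambda(c-z)$ with $c\in C$ and $\lambda>0$. If $c\in\mathrm{int}\,K$, then, since $\mathrm{int}\,K$ is itself a cone with vertex $z$ (excluding $z$), the whole open ray $z+(0,\infty)(c-z)$ would lie in $\mathrm{int}\,K$, contradicting $x\in\partial K$. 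Hence $c\in C\cap\partial K=\Gamma_C(z)$.

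\emph{Boundedness of $\Gamma_C(z)$.} Suppose $c_n\in\Gamma_C(z)$ with $\|c_n\|\to\infty$. After passing to a subsequence, $(c_n-z)/\|c_n-z\|\to u$ with $u\in\mathrm{recc}(C)\setminus\{0\}$. Each point $z+(c_n-z)/\|c_n-z\|$ lies in $\partial K$, because $\partial K$ is invariant under positive dilations centred at $z$. Since $\partial K$ is closed, $z+u\in\partial K$. But by the first step there is $t>0$ with $z+tu\in\mathrm{int}\,C\subset\mathrm{int}\,K$. Dilating about $z$ then gives $z+u\in\mathrm{int}\,K$, a contradiction. Hence $\Gamma_C(z)$ is bounded.
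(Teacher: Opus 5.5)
Your proof is correct. It rests on the same tool as the paper's (Corollary~\ref{C:adir}), but it is organized differently.

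\textbf{The paper's route.} The paper never mentions the recession cone in this proof. In each part it takes the limiting direction $\bar u$ of an unbounded sequence and shows directly that the half-line $z+\R_+\bar u$ misses $\mathrm{int}\,C$. For closedness this holds because $v\notin\mathrm{cone}(C)$. For boundedness it holds because otherwise $c_n\in c(z,\mathrm{int}\,C)=\mathrm{int}\,c(z,C)$ for large $n$. It then contradicts the linear growth given by Corollary~\ref{C:adir} with the sublinear estimate $d(z+s_n\bar u,C)\le s_n\|\bar u-u_n\|$.

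\textbf{Your route.} You isolate one fact up front: for $z\notin C$, every nonzero $v\in\mathrm{recc}(C)$ satisfies $z+v\in\mathrm{int}\,c(z,C)$. You obtain it by playing Corollary~\ref{C:adir} against the trivial bound $d(z+tv,C)\le\|z-c\|$. You then use \cite[Theorem~8.2]{Rock} to see that limit directions of unbounded sequences in $C$ are recession directions. After that, closedness and boundedness of $\Gamma_C(z)$ reduce to compactness plus the dilation-invariance of $\mathrm{int}\,K$ and $\partial K$.

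\textbf{Comparison.} Your route gains transparency and uniformity. Your key fact is exactly the condition that makes $\mathrm{cone}(C-z)$ closed, since its closure is always $\mathrm{cone}(C-z)\cup\mathrm{recc}(C)$, and both parts are handled the same way. The paper's route is slightly more self-contained: it needs only Corollary~\ref{C:adir} and direct distance estimates, with no appeal to Rockafellar.

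Your nonemptiness argument is also more careful than the paper's one-line remark. The one point that deserves a sentence is the existence of $x\in\partial K\setminus\{z\}$. It holds because otherwise the connected set $X\setminus\{z\}$ (recall $d\ge2$) would split into the disjoint nonempty open sets $\mathrm{int}\,K$ and $X\setminus K$.
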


\begin{proof}
(a) By translation, we can (and do) assume that $z=0\notin C$. 
Suppose that
there exists a norm-one vector $v\in\partial[\mathrm{cone}(C)] \setminus \mathrm{cone}(C)$.
Then $\R_+v\cap C=\emptyset$ and there exists a sequence $\{c_n\}_n\subset C$
with $\delta_n:=\left\| \frac{c_n}{\|c_n\|} - v \right\| \to 0$. But then
$0=\lim_{n\to\infty}\delta_n=\lim_{n\to\infty}\frac{\bigl\|c_n-\|c_n\|v\bigr\|}{\|c_n\|}\ge
\lim_{n\to\infty}\frac{d(\|c_n\|v,C)}{\|c_n\|}$. Since $v$ is not an asymptotic direction for $C$,
Corollary~\ref{C:adir} implies that $\{c_n\}_n$ is bounded. Moreover, since $0\notin C$, 
we have $\inf_n \|c_n\|>0$. Passing to a subsequence, we may (and do) assume that
$\|c_n\|\to\tau>0$. But this implies that $c_n\to\tau v$ and hence $\tau v\in C$,
so that $v\in\mathrm{cone}(C)$. This contradiction proves that  $c(z,C)$ is closed.

(b) The set $\partial[c(z,C)]$ is contained in $c(z,C)$, and hence each its
half-line from $z$ intersects $C$. Thus $\Gamma_C(z)$ is nonempty.
To prove that it is also bounded, assume the contrary, that is,
 there exists a sequence $\{c_n\}_{n}\subset\Gamma_C(z)$
such that $\|c_n\|\to+\infty$. Denote $u_n:=\frac{c_n-z}{\|c_n-z\|}$, and notice that
$\|c_n-z\|\to+\infty$. Passing to a subsequence, we can (and do) assume that
$u_n\to\bar{u}\in S_X$. 
We claim that $({z}+\R_+\bar{u})\cap(\mathrm{int}\,C)=\emptyset$. (Indeed, if not,
there exists $\tau>0$ such that ${z}+\tau\bar{u}\in\mathrm{int}\,C$. Then,
for each sufficiently large $n$, $z+\tau u_n\in\mathrm{int}\,C$. Hence 
$c_n\in c(z,\mathrm{int}\,C)=\mathrm{int}\,c(z,C)$, and this is a contradiction).
Since $\bar{u}$ is not an asymptotic direction, we have
$\lim_{t\to+\infty}\frac{d({z}+t\bar{u},C)}{t}>0$
(Corollary~\ref{C:adir}). On the other hand,
\begin{align*}
\frac{d\bigl({z}+\|c_n-z\|\bar{u},C\bigr)}{\|c_n-z\|} &\le
\frac{\bigl\| ({z}+\|c_n-z\|\bar{u}) - (z+\|c_n-z\|u_n)  \bigr\|}{\|c_n-z\|}
\\ &= \|\bar{u}-u_n\| \to0.
\end{align*}
This contradiction completes the proof.
\end{proof}

\begin{definition}
Given a convex body $C\subset X$ and $x\in\partial C$, let us define:
\begin{itemize}
\item $\Sigma_1(x,C)=\{g\in S_{X^*}: \sup g(C)=g(x)\}$ (the set of all norm-one supporting fucntionals to $C$ at $x$);
\item $K(x,C)=\bigcap_{g\in\Sigma_1(x,C)}[g\le g(x)]_X$ (the intersection of all half-spaces containing $C$ whose 
boundary is a supporting hyperplane to $C$ at $x$).
\end{itemize}
\end{definition}

Using a simple application of the Hahn-Banach separation theorem,
it is easy to see that $K(x,C)=\overline{c(x,C)}$. However, we shall not need this fact.

\begin{lemma}\label{L:good}
Let $\mathrm{dim}\,X=d\ge2$. Let $C\subset X$ be a convex body, and let
$x\in X\setminus C$ be such that the cone $c(x,C)$ is closed. Denote
$\Gamma_C(x):=\partial[c(x,C)]\cap C$. Then
$$
x\in K(y,C) \quad \text{for each $y\in\partial C\setminus\mathrm{conv}[\Gamma_C(x)\cup\{x\}]$.}
$$
\end{lemma}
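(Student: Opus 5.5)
The statement says that every supporting functional at such a $y$ satisfies $g(x)\le g(y)$. I would prove the contrapositive. Suppose $y\in\partial C$ and there is $g\in\Sigma_1(y,C)$ with $g(x)>g(y)=\sup g(C)$. The goal is then to show that $y\in\mathrm{conv}[\Gamma_C(x)\cup\{x\}]$.

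\emph{Step 1: $y$ is the first point of $C$ on the ray from $x$ through $y$.} Consider the points $x+t(y-x)$ for $t\in[0,1)$. Since $g$ is affine along this segment and $g(x)>g(y)$, we have $g(x+t(y-x))>g(y)=\sup g(C)$. Hence none of these points lies in $C$.

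\emph{Step 2: the cone $c(x,C)$ is pointed.} Since $x\notin C$ and $C$ is closed and convex, the Hahn--Banach theorem gives $\varphi\in X^*$ and $\delta>0$ with $\varphi(c)\ge\varphi(x)+\delta$ for all $c\in C$. Every nonzero vector of $\mathrm{cone}(C-x)$ has the form $t(c-x)$ with $t>0$ and $c\in C$, so $\varphi$ is strictly positive on it. Thus $\mathrm{cone}(C-x)$ contains no line.

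\emph{Step 3: every boundary half-line of $c(x,C)$ from $x$ meets $C$.} Since $c(x,C)$ is closed by hypothesis, each point $z\ne x$ of $\partial[c(x,C)]$ can be written $z=x+t(c-x)$ with $t>0$ and $c\in C$. The point $c$ lies on the half-line from $x$ through $z$, and that whole half-line lies in $\partial[c(x,C)]$. Hence $c\in\Gamma_C(x)$.

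\emph{Step 4: case analysis.} Clearly $y\in c(x,C)$. If $y\in\partial[c(x,C)]$, then $y\in\Gamma_C(x)$ and we are done. Otherwise $y\in\mathrm{int}\,c(x,C)$. Choose any two-dimensional affine plane $\Pi$ containing $x$ and $y$; this is possible because $d\ge2$. The set $\Pi\cap c(x,C)$ is a closed convex cone in $\Pi$ with vertex $x$. It is pointed by Step~2. Its relative interior contains $y$, so it is a genuine angle of opening strictly between $0$ and $\pi$.

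Since $\Pi$ meets $\mathrm{int}\,c(x,C)$, the relative boundary of $\Pi\cap c(x,C)$ in $\Pi$ equals $\Pi\cap\partial[c(x,C)]$. Thus its two boundary rays lie in $\partial[c(x,C)]$. By Step~3 we can pick $a,b\in\Gamma_C(x)$ on these two rays.

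\emph{Step 5: conclusion.} The segment $[a,b]$ lies in $C$ by convexity. The ray from $x$ through $y$ lies strictly inside the angle with sides $x+\R_+(a-x)$ and $x+\R_+(b-x)$. Therefore it crosses $[a,b]$ at some point $p=x+s(y-x)$, and $p\in C$. Step~1 forces $s\ge1$, so $y\in[x,p]\subset\mathrm{conv}\{x,a,b\}\subset\mathrm{conv}[\Gamma_C(x)\cup\{x\}]$. This is the required contradiction.

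The only delicate point is the geometry in Step~4. There I need the planar section to be a nondegenerate pointed angle whose boundary rays are genuinely boundary rays of $c(x,C)$. This is secured by the pointedness from Step~2 and by the standard fact that, for a convex set with nonempty interior and a plane meeting that interior, the relative boundary of the section is the section of the boundary.
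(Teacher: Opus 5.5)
Your proof is correct and takes essentially the paper's route: a two-dimensional section through $x$ and $y$, whose two boundary rays meet $C$ at points of $\Gamma_C(x)$, and the segment between those points crosses the ray through $y$ at a point that $g$ forces to lie beyond $y$. The paper argues directly, splitting on whether the $t$ with $x+t(y-x)\in\mathrm{int}\,C$ satisfies $t>1$ (which it rules out this way) or $t<1$ (which gives $g(x)<g(y)$); your contrapositive does the same thing, with Step~1 replacing the paper's computation $\tau s\le s\Rightarrow\tau\ge1$.
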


\begin{proof}
By translation, we can (and do) assume that $x=0\notin C$. 
So we have that $\mathrm{cone}(C)$ is closed
and we consider $\Gamma_C(0)=\partial[\mathrm{cone}(C)]\cap C$.
Let $y\in \partial C\setminus \mathrm{conv}[\Gamma_C(0)\cup\{0\}]$ and $g\in\Sigma_1(y,C)$,
and denote $s:=\sup g(C)=g(y)$. Since $y\notin \Gamma_C(0)$, we have 
$y\in\mathrm{int}[\mathrm{cone}(C)]=\mathrm{cone}(\mathrm{int}\,C)$.
Thus there exists $t>0$ such that $ty\in\mathrm{int}\,C$. Clearly, $t\ne 1$.

Assume that $t>1$. Since $g(ty)<g(y)$, we must have $s\equiv g(y)<0$. Take an arbitrary
two-dimensional subspace $Y\subset  X$ that contains $y$. The boundary
$\partial_Y[\mathrm{cone}(C\cap Y)]=\partial_Y[\mathrm{cone}(C)\cap Y]$ consists
of two half-lines, say $\R^+u$ and $\R^+v$ with $u,v\in C$. 
Notice that the two half-lines
cannot coincide (since $Y$ intersects $\mathrm{int}\,C$) and cannot
be opposite to each other (since $0\notin C$).
Clearly, $u,v\in\Gamma_C(0)$.
The segment $[u,v]$ must contain a positive multiple of $y$, that is, there exist $\tau>0$
and $\lambda\in(0,1)$ such that $\tau y=(1-\lambda)u+\lambda v$. Applying $g$,
we obtain $\tau s=(1-\lambda)g(u)+\lambda g(v)\le s$, and hence $\tau\ge1$. 
Consequently, $y=\frac{1-\lambda}{\tau}u + \frac{\lambda}{\tau}v + \left(1-\frac1\tau\right)0
\in\mathrm{conv}[\Gamma_C(0)\cup\{0\}]$. But this contradicts our assumptions.

So we must have $t\in(0,1)$. Since $g(ty)<g(y)$, we have $g(y)>0$. Thus
$0\in[g\le g(y)]_X$. Since $g\in \Sigma_1(y,C)$ was arbitrary, we conclude that
$0\in K(y,C)$ as needed.
\end{proof}

\medskip

\subsection*{An extension method}
\ 
\\
Now, we are going to asign to any convex body $B$, contained in a given fixed convex body $C$, its 
``extension'' $e(B)\subset X$; and we also define $e(\emptyset)$.
This extension method is practically the same as in our previous paper \cite{DEVEqc_UC}. The only
substantial difference is that we consider here closed convex sets instead of open ones. We also
use a different notation.

\begin{definition}\label{D:e}
Let $C$ be a convex body in a normed space $X$, and let $B\subset C$ be a closed convex set such that
$B=\overline{\mathrm{int}\,B}$ (equivalently, $B$ is either empty or a convex body).
If  $\emptyset\ne B\ne C$, we define
\begin{equation}\label{E:e}
e(B)=
\bigcap_{y\in\overline{(\mathrm{int}\,C)\cap \partial B}}K(y,B) = \bigcap_{y\in\partial_C B}K(y,B).
\end{equation}
We also define $e(\emptyset)=\emptyset$ and $e(C)=X$.
\end{definition}

\begin{remark}\label{R:e}
Concerning \eqref{E:e}, the equality $\overline{(\mathrm{int}\,C)\cap \partial B}=\partial_C B$ is an easy exercise.
Notice that $e(B)$ is a closed convex set.
Moreover, an elementary Hahn-Banach separation argument implies that $e(B)\cap C=B$.
\end{remark}

\begin{proposition}\label{P:riempimento}
Let $\mathrm{dim}\,X=d\ge2$, and let $C\subset X$ be a (bounded or unbounded) convex body 
with no asymptotic directions. Let $\{B_n\}_n$ be an increasing 
sequence of convex bodies contained in $C$
such that $C=\bigcup_n \mathrm{int}_C B_n$. 
Then $X=\bigcup_n e(B_n)$.
\end{proposition}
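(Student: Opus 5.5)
Points of $C$ are harmless: since $C=\bigcup_n\mathrm{int}_C B_n\subset\bigcup_n B_n$ and $B_n\subset e(B_n)$ by Remark~\ref{R:e}, we have $C\subset\bigcup_n e(B_n)$. So fix $z\in X\setminus C$; we must find $n$ with $z\in e(B_n)$. If some $B_n=C$, then $e(B_n)=X$ and we are done, so assume $B_n\ne C$ for all $n$. The plan is to reduce to finitely many boundary points of $C$, namely the compact set $\Gamma_C(z)$. Points of $\partial C$ away from it are handled by Lemma~\ref{L:good}.

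\textbf{Step 1 (localization).} If $C$ is bounded, $c(z,C)$ is closed by compactness and $\Gamma_C(z)$ is compact. If $C$ is unbounded, Lemma~\ref{L:Gamma} gives both facts. Put $G=\mathrm{conv}[\Gamma_C(z)\cup\{z\}]$, a compact convex set. Then $G\cap C$ is compact, so by the covering hypothesis and monotonicity there is $n_0$ with $G\cap C\subset\mathrm{int}_C B_n$ for all $n\ge n_0$. Here I use that $\{\mathrm{int}_C B_n\}$ is an increasing cover of $C$ by relatively open sets. Even better, I take a compact neighbourhood $N$ of $G\cap C$ in $C$ and choose $n_0$ with $N\subset\mathrm{int}_C B_n$.

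\textbf{Step 2 (the supporting half-spaces).} Fix $n\ge n_0$ and $y\in\partial_C B_n$. We must show $z\in K(y,B_n)$, i.e.\ $g(z)\le g(y)$ for every $g\in\Sigma_1(y,B_n)$. Since $y\in\partial_C B_n$, $y$ is not in $\mathrm{int}_C B_n$, so $y\notin N$.

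The key idea is to compare the cone $c(z,B_n)$ with $c(z,C)$. Since $\Gamma_C(z)\subset\mathrm{int}_C B_n$, the boundary rays of $c(z,C)$ hit $B_n$ already. Hence $c(z,B_n)=c(z,C)$, because every ray from $z$ meeting $C$ lies in a cone generated by $\Gamma_C(z)$-type points. More precisely, in each 2-plane through $z$ the two extreme rays pass through points of $\Gamma_C(z)\subset B_n$. Consequently $c(z,B_n)$ is closed and $\Gamma_{B_n}(z)=\partial[c(z,C)]\cap B_n$.

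Now apply the argument of Lemma~\ref{L:good} to the convex body $B_n$ and the point $z\notin B_n$. That proof only uses that $c(z,B_n)$ is closed; it gives $z\in K(y,B_n)$ for each $y\in\partial B_n\setminus\mathrm{conv}[\Gamma_{B_n}(z)\cup\{z\}]$. So it remains to show that no $y\in\partial_C B_n$ lies in $\mathrm{conv}[\Gamma_{B_n}(z)\cup\{z\}]$.

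\textbf{Step 3 (main obstacle).} The main obstacle is controlling $\Gamma_{B_n}(z)$, which could be larger than $\Gamma_C(z)$. It consists of points of $B_n$ on boundary rays of the common cone, i.e.\ segments on those rays between $z$-side entry points. I would show that $\Gamma_{B_n}(z)\subset\Gamma_C(z)$ plus points on the rays $z\to\Gamma_C(z)$. Every point of $\partial[c(z,C)]\cap C$ lies in $\Gamma_C(z)$ by definition, and $B_n\subset C$, so in fact $\Gamma_{B_n}(z)=\Gamma_C(z)\cap B_n=\Gamma_C(z)$.

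Hence $\mathrm{conv}[\Gamma_{B_n}(z)\cup\{z\}]\cap C=G\cap C\subset\mathrm{int}_C B_n$. This set is disjoint from $\partial_C B_n$, as needed. Therefore $z\in\bigcap_{y\in\partial_C B_n}K(y,B_n)=e(B_n)$.

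The delicate points to verify carefully are the equality $c(z,B_n)=c(z,C)$ and the applicability of the Lemma~\ref{L:good} argument to $B_n$. For the cone equality, I would argue in 2-dimensional sections as in the proof of Lemma~\ref{L:good}, or via supporting hyperplanes through $z$ touching $C$ at points of $\Gamma_C(z)$. In the bounded case all of this uses only compactness. The absence of asymptotic directions enters solely through Lemma~\ref{L:Gamma}.
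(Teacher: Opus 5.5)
Your proof is correct and follows the paper's argument. You choose $n$ so that the compact set $\mathrm{conv}[\Gamma_C(z)\cup\{z\}]\cap C$ lies in $\mathrm{int}_C B_n$, deduce $\Gamma_{B_n}(z)=\Gamma_C(z)$, and apply Lemma~\ref{L:good} to $B_n$. You also fill in points the paper leaves implicit: the cone equality $c(z,B_n)=c(z,C)$ via two-dimensional sections, which also gives the closedness of $c(z,B_n)$ required by Lemma~\ref{L:good}, the bounded case, which Lemma~\ref{L:Gamma} does not cover, and the case $B_n=C$. The extra neighbourhood $N$ is unnecessary, and ``finitely many boundary points'' should read ``a compact set of boundary points''.
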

\begin{proof}
Notice that $C=\bigcup_n B_n\subset \bigcup_n e(B_n)$. 
Fix an arbitrary $x\in X\setminus C$. Let $\Gamma_C$ be as in
Lemma~\ref{L:Gamma}. The same lemma implies that 
$K:=\mathrm{conv}[\Gamma_C(x)\cup\{x\}] \cap C$ 
is a compact subset of $C$. By our assumption, there exists an index $m$
for which $K\subset \mathrm{int}_C B_m$. Consequently, $\partial_C B_m\cap K=\emptyset$.
Moreover,  since $\Gamma_C(x)$ is contained in $\mathrm{int}_C B_m$, we must have 
$\Gamma_{B_m}(x)=\Gamma_C(x)$.
Since $\partial_C B_m$ does not intersect $\mathrm{conv}[\Gamma_{B_m}(x)\cup\{x\}]$, 
we can apply Lemma~\ref{L:good}
 to conclude 
that $x\in\bigcap_{y\in\partial_C B_m}K(y,B_m)=e(B_m)$. We are done.
\end{proof}
\begin{observation}\label{O:riempimento}
Let $C$ be a convex body in a finite-dimensional normed space $X$, and let 
$\{B_n\}_n$ be an increasing sequence of convex bodies contained in $C$. Then
$C=\bigcup_n \mathrm{int}_C B_n$ if and only if
\begin{enumerate}
\item[$(*)$] for each $r>0$ there exists $m\in\N$ such that $C\cap r B_X\subset B_m$.
\end{enumerate}
{\em
(Indeed, the necessity follows by compactness; for sufficiency, if
$x\in C$ it suffices to consider some $r>\|x\|$.)
}
\end{observation}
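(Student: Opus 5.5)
The claim is that, for an increasing sequence $\{B_n\}_n$ of convex bodies in $C$, condition $C=\bigcup_n \mathrm{int}_C B_n$ is equivalent to condition $(*)$. I would prove the two implications separately, using only finite-dimensionality (so that $C\cap rB_X$ is compact) and the monotonicity of $\{B_n\}_n$.

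\emph{Necessity.} Assume $C=\bigcup_n \mathrm{int}_C B_n$ and fix $r>0$. Since $X$ is finite-dimensional and $C$ is closed, the set $K:=C\cap rB_X$ is compact. The sets $\mathrm{int}_C B_n$ are relatively open in $C$, so $\{K\cap\mathrm{int}_C B_n\}_n$ is an open cover of $K$ in the relative topology of $K$. By compactness, finitely many of these sets cover $K$. Since the sequence $\{B_n\}_n$ is increasing, the sets $\mathrm{int}_C B_n$ are increasing as well. Hence a single index $m$ suffices, giving $K\subset\mathrm{int}_C B_m\subset B_m$.

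\emph{Sufficiency.} Assume $(*)$ and fix $x\in C$. Choose $r>\|x\|$ and let $m$ be given by $(*)$, so that $C\cap rB_X\subset B_m$. The set $C\cap U(0,r)$, with $U(0,r)=rU_X$ the open ball, is a relatively open subset of $C$. It contains $x$ and is contained in $B_m$. Therefore $x\in\mathrm{int}_C B_m$.

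This shows $C\subset\bigcup_n\mathrm{int}_C B_n$. The reverse inclusion is trivial, since each $B_n\subset C$.

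No step here is a real obstacle. The only point needing care is in the necessity direction: the finite subcover must be collapsed to a single index, and this is exactly where monotonicity of $\{B_n\}_n$ (and hence of $\{\mathrm{int}_C B_n\}_n$) is used.
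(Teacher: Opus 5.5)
Your proof is correct and follows the same route as the paper's parenthetical sketch. For necessity, you use compactness of $C\cap rB_X$ and collapse the finite subcover via monotonicity of $\mathrm{int}_C B_n$. For sufficiency, you take $r>\|x\|$ and use the relatively open neighborhood $C\cap rU_X$ of $x$, which lies inside $B_m$.
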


\begin{lemma}\label{L:intersezione}
Let $\mathrm{dim}\,X=d\ge2$, and let $C\subset X$ be a convex body 
with no asymptotic directions. Let $\{B_n\}_n$ be a sequence of convex bodies contained in $C$
such that $B_{n+1}\subset B_n$ for each $n$, and $\bigcap_n B_n=\emptyset$.
Then $\bigcap_n e(B_n)=\emptyset$.
\end{lemma}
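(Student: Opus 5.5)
Fix an arbitrary $x\in X$ and show that $x\notin e(B_n)$ for some $n$. Since $e(B_n)\cap C=B_n$ by Remark~\ref{R:e} and $\bigcap_n B_n=\emptyset$, this is immediate for $x\in C$: pick $n$ with $x\notin B_n$. So assume $x\in X\setminus C$. We may discard the trivial cases, since $B_n\ne C$ eventually (otherwise $\bigcap_n B_n=C\ne\emptyset$) and $B_n$ are convex bodies. We will then use the definition $e(B)=\bigcap_{y\in\partial_C B}K(y,B)$: it suffices to find, for some $n$, a point $y\in\partial_C B_n$ and a supporting functional $g\in\Sigma_1(y,B_n)$ with $g(x)>g(y)$.

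The mechanism is separation from a far-away point. Since the $B_n$ are nested closed sets with empty intersection, each bounded set meets only finitely many of them. In particular $d(x,B_n)\to+\infty$, because $C\cap B(x,r)$ is compact for every $r$. For each $n$ we take the nearest point $p_n\in B_n$ to $x$ (it exists in finite dimension) and a norm-one functional $g_n$ with $\sup g_n(B_n)=g_n(p_n)$ and $g_n(x)-g_n(p_n)=d(x,B_n)>0$. Then $g_n\in\Sigma_1(p_n,B_n)$ and $x\notin[g_n\le g_n(p_n)]_X\supset K(p_n,B_n)$. If $p_n\in\partial_C B_n$, i.e. $p_n\in \overline{(\mathrm{int}\,C)\cap\partial B_n}$, we conclude $x\notin e(B_n)$. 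The remaining issue is that $p_n$ might lie on $\partial C$, away from the relative boundary $\partial_C B_n$.

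To handle that case, use the absence of asymptotic directions via Lemma~\ref{L:Gamma} and Lemma~\ref{L:good}, exactly as in Proposition~\ref{P:riempimento}, with the roles reversed. The cone $c(x,C)$ is closed, and $K_x:=\mathrm{conv}[\Gamma_C(x)\cup\{x\}]\cap C$ is compact. Hence for $n$ large, $B_n\cap K_x=\emptyset$. Every point of $B_n$ then lies in the interior of the cone $c(x,C)$ and outside $K_x$, so the segment from $x$ to any $b\in B_n$ meets $C$ before reaching $b$; more precisely, it enters $\mathrm{int}\,C$. Take $b\in\mathrm{int}_C B_n$ (nonempty since $B_n$ is a body) and move from $b$ toward $x$. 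The last point of $B_n$ on $[x,b]$ is a point $y\in\partial B_n$, and since the segment near $y$ on the $x$-side still lies in $\mathrm{int}\,C$ (by the cone/$K_x$ structure), we get $y\in(\mathrm{int}\,C)\cap\partial B_n\subset\partial_C B_n$. Any $g\in\Sigma_1(y,B_n)$ satisfies $g(b)\le g(y)$. Since $y$ lies strictly between $b$ and $x$ and $g$ is not constant on the segment, we want $g(x)>g(y)$. This holds provided $b$ is chosen so that $g(b)<g(y)$, which is guaranteed when $b\in\mathrm{int}\,B_n$ ($B_n$ is a convex body). Then $x\notin K(y,B_n)\supset e(B_n)$.

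The main obstacle is the claim that, for large $n$, the segment $[x,b]$ passes through $\mathrm{int}\,C$ just beyond its exit point $y$ from $B_n$, so that $y$ is a relative boundary point rather than a point of $\partial C$. To establish it we will argue that the open segment $(x,b]$ minus a neighborhood of $x$ lies in $\mathrm{int}\,c(x,C)$. Its first entry point into $C$ lies in $\Gamma$-like region bounded by $\mathrm{conv}[\Gamma_C(x)\cup\{x\}]$, while $B_n$ is disjoint from $K_x$, so the portion of the segment between $K_x$ and $B_n$ lies in $C$. By convexity of $C$ and since the ray hits $\mathrm{int}\,C$, all of it except possibly the entry point is in $\mathrm{int}\,C$. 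If this fails, the fallback is to show directly, via Lemma~\ref{L:good} applied to $B_n$ ($\Gamma_{B_n}(x)$ is bounded, with $c(x,B_n)$ closed), that $x\notin K(y,B_n)$ for some $y\in\partial_C B_n$ outside $\mathrm{conv}[\Gamma_{B_n}(x)\cup\{x\}]$.
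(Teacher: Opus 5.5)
Your second argument is correct, and it takes a different route from the paper's; you can drop the nearest-point attempt entirely. The step you leave at the level of ``by the cone/$K_x$ structure'' needs to be stated explicitly. Take $b\in\mathrm{int}\,B_n$ and let $z$ be the unique point of $[x,b]\cap\partial C$. For every $g\in\Sigma_1(z,C)$ one has $g(b)<g(z)$, and since $z\in(x,b)$ this forces $g(x)>g(z)$, i.e.\ $x\notin K(z,C)$. Since $c(x,C)$ is closed (Lemma~\ref{L:Gamma}), Lemma~\ref{L:good} then gives $z\in\mathrm{conv}[\Gamma_C(x)\cup\{x\}]\cap C=K_x$. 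This set misses $B_n$ for large $n$. Hence $[x,b]\cap B_n=[y,b]$ with $y\in(z,b)\subset\mathrm{int}\,C$, so $y\in\partial_C B_n$, and any $g\in\Sigma_1(y,B_n)$ gives $g(x)>g(y)$. This is the same step as in the proof of Proposition~\ref{P:contenimento}, and your ``fallback'' is unnecessary. You should also note that $C$ must be unbounded, since nested nonempty compact sets cannot have empty intersection; otherwise Lemma~\ref{L:Gamma} would not apply.

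The paper argues globally with a single functional. It picks $h\in S_{X^*}$ bounded below on $C$, so Lemma~\ref{L:bdd_sublevels} makes the sets $[h\le\alpha]_C$ compact and $\alpha_n=\min h(B_n)\to+\infty$. A minimizer $y_n$ of $h$ on $B_n$ lies in $\partial_C B_n$ (because $\alpha_n>\min h(C)$), and $-h\in\Sigma_1(y_n,B_n)$ yields $e(B_n)\subset[h\ge\alpha_n]_X$. That proof is shorter, needs neither cones nor Lemma~\ref{L:good}, and gives the explicit uniform containment of $e(B_n)$ in half-spaces receding to infinity. Yours is pointwise: each $x$ gets its own $n$ and its own separating functional. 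It reuses the cone machinery of Proposition~\ref{P:riempimento}, so the lemma becomes a mirror image of that proposition. Both proofs use the absence of asymptotic directions, but through different lemmas: bounded sublevel sets of a linear functional in the paper, boundedness of $\Gamma_C(x)$ in yours.
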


\begin{proof}
Notice that the assumptions imply that $C$ must be unbounded.
Fix $h\in S_{X^*}$ such that $m:=\inf h(C)>-\infty$. Since each sublevel set $[h\le\alpha]_C$ is compact
(Lemma~\ref{L:bdd_sublevels}), $[h\le\alpha]_C$ can intersect only finitely many $B_n$'s. Consequently,
$\alpha_n:=\inf h(B_n)\to+\infty$. We can assume that $\alpha_1>m$ (and hence $\alpha_n>m$ for each $n$).
By compactness, moreover, for each $n$ there is $y_n\in B_n$ such that $h(y_n)=\alpha_n$.
It follows that $y_n\in\partial_C B_n$ and $-h\in\Sigma_1(B_n,y_n)$. Consequently,
$e(B_n)\subset[h\ge\alpha_n]_X$ for each $n$, and this implies that $\bigcap_n e(B_n)=\emptyset$.
\end{proof}

The following lemma is an easy extension of \cite[Lemma~5.3]{DEVEqc_UC} to unbounded bodies.

\begin{lemma}\label{L:5.3.bis}
Let $C$ be a (bounded or unbounded) convex body in an arbitrary normed space, and let $C$ 
contain no line.
Let $B\subset C$ be a convex body, $x\in e(B)\setminus B\equiv e(B)\setminus C$ and $y\in C\setminus B$.
Then $[x,y]\cap B\ne\emptyset$.
\end{lemma}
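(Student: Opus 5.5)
We argue by contradiction. Assume $[x,y]\cap B=\emptyset$. Since $x\notin C$ and $y\in C$, the segment $[x,y]$ meets $\partial C$; let $z$ be the point of $[x,y]\cap C$ closest to $x$, so that $z\in\partial C$ and $[z,y]\subset C\setminus B$. The key step will be to produce a point $w\in\partial_C B$ and a supporting functional $g\in\Sigma_1(w,B)$ with $g(x)>g(w)$. That gives $x\notin K(w,B)\supset e(B)$, which contradicts $x\in e(B)$. Since $e(B)=\bigcap_{w\in\partial_C B}K(w,B)$, the task is exactly to find a ``relative'' boundary point of $B$, that is, one lying in $\mathrm{int}\,C$ or in the closure of $\partial B\cap\mathrm{int}\,C$, that sees $x$ strictly on the outer side.

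To find $w$, I will separate $[x,y]$ from $B$. Note that $[x,y]$ is compact and $B$ is closed, but $B$ may be unbounded, so strict separation needs care. I plan to use the distance instead: let $\delta:=d([x,y],B)$. This infimum is attained, and $\delta>0$ because in finite dimensions we can intersect $B$ with a large ball. Take $p\in[x,y]$ and $w\in B$ with $\|p-w\|=\delta$. Then there is a norm-one $g$ that supports $B$ at $w$ and satisfies $g\ge g(w)+\delta$ on $[x,y]$; this is standard separation of $B$ from the convex set $[x,y]+\delta U_X$. In particular $g(x)>g(w)$ and $g(y)>g(w)$.

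The remaining task is to ensure that $w$ can be chosen in $\partial_C B$, i.e., that $w$ is not a point of $\partial C$ isolated from $\mathrm{int}\,C\cap\partial B$. I expect this to be the main obstacle. The plan is to consider the whole face $F=B\cap g^{-1}(g(w))$, which is a closed convex set. Since $g(y)>g(w)$ and $y\in C$, the segment from any point of $F$ toward $y$ enters $[g>g(w)]$, so it leaves $B$ inside $C$. Using $\mathrm{int}\,C\neq\emptyset$ and perturbing toward interior points of $C$, we will find points of $\partial B\cap\mathrm{int}\,C$ near $F$; more concretely, we can choose $w\in F$ in the relative interior direction towards $y$. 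If instead every point of $F$ lay in $\partial C$ with no relative boundary nearby, then locally near $F$ we would have $B=C$. Then $g$ would support $C$ near $w$, so $g(y)\le g(w)$ by convexity of $C$, which is a contradiction. To make this rigorous, I would take $w'\in\partial_C B$ on the segment $[w,y]$: since $w\in B$ and $y\notin B$, such a point exists and lies in $C$. Then I would argue via a limiting or perturbation argument, using that $B=\overline{\mathrm{int}\,B}$, that some supporting functional at a nearby relative boundary point still separates $x$, by upper semicontinuity of the subdifferential map $w\mapsto\Sigma_1(w,B)$. The hypothesis that $C$ contains no line is used to guarantee that the supporting functionals involved do not degenerate: $g$ must separate the compact set $[x,y]$ from a set that is not a union of parallel lines, and the recession cones behave accordingly. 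In the worst case I would fall back on reducing to a two-dimensional section through $x$, $y$ and an interior point of $B$, where $\partial_C B$ is an explicit arc and the argument is a planar picture.
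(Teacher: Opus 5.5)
Your route differs from the paper's and, in finite dimensions, it works. The step you flag as the main obstacle is immediate. Since $g(y)\ge g(w)+\delta>g(w)=\sup g(B)$, each point $w+t(y-w)$ with $t\in(0,1]$ has $g$-value above $\sup g(B)$, and it lies in $C$ by convexity. So $(w,y]\subset C\setminus B$, and letting $t\to0^+$ gives $w\in B\cap\overline{C\setminus B}=\partial_C B$. Since $g\in\Sigma_1(w,B)$, the inclusion $e(B)\subset K(w,B)\subset[g\le g(w)]_X$ contradicts $g(x)\ge g(w)+\delta$. This is just your local-support remark applied at $w$ itself. The face $F$, the perturbation and the upper semicontinuity of $\Sigma_1(\cdot,B)$ can all be dropped: the point $w'\in\partial_C B\cap[w,y]$ you propose to take is $w$ itself. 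The hypothesis that $C$ contains no line plays no role in your argument, so your remark about where it is used is inaccurate.

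The one real gap is generality. The lemma is stated for an arbitrary normed space, where the nearest-point pair $(p,w)$ need not exist. The patch is short:
\begin{enumerate}
\item Translate so that $0\in\mathrm{int}\,B$.
\item Run your argument for $B\cap Y$ inside $Y=\mathrm{span}\{x,y\}$. This gives $w$ and $g_Y\in Y^*$, and $w\in\partial_C B$ follows exactly as above.
\item The affine set $\{u\in Y: g_Y(u)=g_Y(w)\}$ misses $\mathrm{int}\,B$. By the geometric Hahn--Banach (Mazur) theorem it lies in a closed hyperplane $\{G=G(w)\}$ that also misses $\mathrm{int}\,B$.
\item After normalization $G\in\Sigma_1(w,B)$. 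Moreover $G|_Y-G(w)=c\,(g_Y-g_Y(w))$ with $c>0$ (evaluate at $0$), so $G(x)>G(w)$.
\end{enumerate}

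The paper argues directly, not by contradiction. It first reduces to $y\in\partial C$ and works in the plane $\mathrm{span}\{x,y\}$. It then uses Lemma~5.2 of the authors' earlier paper to place $x'=[0,x]\cap\partial B$ in the relative interior of $\partial B\cap\partial C$ within $\partial C$. A connectedness argument along the boundary arc of $C\cap Y$ from $x'$ to $y$ then produces $u\in\partial_C B$; the no-line hypothesis is used here, to make that boundary a single curve. Finally, the ray through $u$ meets $[x,y]$ at $tu$, and $x\in K(u,B)$ forces $t\le1$, so $tu\in B$.

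Your nearest-point separation avoids the planar topology, the auxiliary lemma, the reduction to $y\in\partial C$ and the no-line hypothesis. So it proves the lemma for every convex body $C$, lines allowed. In exchange, the paper's argument locates the intersection point explicitly and needs no nearest points, so it runs in any normed space without the lifting step.
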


\begin{proof}
If $y\in\mathrm{int}\,C$, let $y'$ be the unique point of the intersection $[x,y]\cap \partial C$.
If $y'\in B$, we are done; if not we can consider $y'$ instead of $y$. Thus we can (and do)
assume that $y\in\partial C$. By translation, we can (and do) assume that
$0\in\mathrm{int}\,B \setminus\mathrm{aff}\{x,y\}$.
Notice that the subspace $Y:=\mathrm{span}\{x,y\}$ is two-dimensional. 
Let $x'$ be the unique element of $[0,x]\cap\partial B$. By \cite[Lemma~5.2]{DEVEqc_UC},
$x'\in\mathrm{int}_{\partial C}(\partial B \cap \partial C)$. Consider the convex body
$D:= C\cap Y$ in $Y$. If $D$ is bounded, then $\partial_Y D$ is a closed simple arc and we can
repeat the argument from the proof
of \cite[Lemma~5.3]{DEVEqc_UC}. The proof of the case of $D$ unbounded
will be done in a quite similar way.

Let $D$ be unbounded. Since $D$ contains a half-line but not any line,
$\partial D$ is homeomorphic to the real line.
Let $\gamma$ be the arc of $\partial D$ with endpoints $x'$ and $y$.
Notice that we can write $\partial C$ as the union of three disjoint sets 
$$
\partial C\; = \;(\partial C\setminus B)\; \cup\;\mathrm{int}_{\partial C}(\partial B\cap\partial C)
\;\cup\;\partial_{\partial C}(\partial B\cap\partial C),
$$
where the first one and the second one are relatively open in $\partial C$ and
they contain
$y$ and $x'$, respectively. Since $\gamma$ is connected,
there must exist $u\in\gamma\cap\partial_{\partial C}(\partial B\cap\partial C)$.
Let $u'$ be the unique common point of $[x,y]$ and the half-line $\R_+u$.
There exist $t>0$ and $\lambda\in(0,1)$ such that 
$tu=u'=(1-\lambda)x+\lambda y$.
By \cite[Lemma~5.2(d)]{DEVEqc_UC}, 
$u \in \overline{\mathrm{int}\,C \cap \partial B}\equiv \partial_C B$.
Consider $g\in \Sigma_1(u,C)$, and observe that $g(u)>0$, and
$g(x)\le g(u)$ since $x\in e(B)$ and $g\in\Sigma_1(u,B)$. 
Then $t g(u)=(1-\lambda)g(x)+\lambda g(y)\le g(u)$,
and hence $t\le 1$. Consequently, $u'\in[0,u]\cap[x,y]\subset B\cap[x,y]$.
\end{proof}

\medskip

\section{Extension from unbounded rotund bodies}

The main result of the present section will be Theorem~\ref{T:ext_contQC}. Before stating it, we need some
preparation.

Let $C$ be a convex body in a normed space $X$. Recall that $C$ is said to be {\em rotund}
if $\partial C$ does not 
contain any nontrivial line segment.

Now, let us define the {\em modulus of local uniform rotundity}
of $C$ at a point $x\in\partial C$. 
Let $r>0$ be the radius of a closed ball
contained in $C$. For
$\epsilon\in[0,2r)$ we define
\begin{equation}\label{E:dc}
\begin{aligned}
\delta_C(x,\epsilon)&\textstyle :=\inf \bigl\{  
d(\frac{x+y}2, \partial C):\; y\in\partial C,\; \|x-y\|\ge\epsilon
\bigr\} \\
&\textstyle =\inf \bigl\{  
d(\frac{x+y}2, \partial C):\; y\in\partial C,\; \|x-y\|=\epsilon
\bigr\}  \\
&\textstyle =\inf \bigl\{  
d(\frac{x+y}2, \partial C):\; y\in C,\; \|x-y\|\ge\epsilon
\bigr\}  \\
&\textstyle =\inf \bigl\{  
d(\frac{x+y}2, \partial C):\; y\in C,\; \|x-y\|=\epsilon
\bigr\} .
\end{aligned}
\end{equation}
The modulus $\delta_C(x,\cdot)$ is a natural generalization of the classical 
modulus of local uniform rotundity 
defined in the case when $C$ is the unit ball of $X$. For a general $C$, 
the equalities in \eqref{E:dc} are not obvious;
their proof, as well as some further properties of $\delta_C(x,\cdot)$, can be found in 
\cite{DEVE_loc_moduli} (see also \cite{DEVE_moduli} for some related results). We shall use the easy fact that 
one always has $\delta_C(x,\epsilon)\leq\epsilon/2$.

\begin{remark}\label{R:modulus}
\ 
\begin{itemize}
\item
It is clear that the function $\delta_C(x,\cdot)$ is non-decreasing
on $[0,2r)$, and $\delta_C(x,0)=0$. 
\item
Under the additional assumption that
$X$ is finite-dimensional, an easy compactness argument implies that
\\
{\em  if $C$ is rotund and $E\subset\partial C$ is a bounded set then $\inf_{x\in E}\delta_C(x,\epsilon)>0$
 for each $\epsilon\in(0,2r)$.}
\\
In particular:
\begin{enumerate}[(a)]
\item if $C$ is rotund then it is locally uniformly rotund (i.e., $\delta_C(x,\epsilon)>0$
for all $x\in\partial C$ and $\epsilon\in(0,2r)$);
\item if $C$ is rotund and bounded then it is uniformly rotund (i.e.,
$\inf_{x\in\partial C}\delta_C(x,\epsilon)>0$ for all $\epsilon\in(0,2r)$).
\end{enumerate}
\end{itemize}
\end{remark}

\begin{lemma}\label{L:primo}
Let $\mathrm{dim}\,X=d\ge2$, let $B\subsetneq C$ be two convex bodies in $X$, and let
$C$ be rotund. Then
$$
\mathrm{int}_C B\subset\mathrm{int}\, e(B).
$$
\end{lemma}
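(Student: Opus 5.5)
Fix $x\in\mathrm{int}_C B$. If $x\in\mathrm{int}\,C$ then $x\in\mathrm{int}\,B\subset\mathrm{int}\,e(B)$, because $B\subset e(B)$ by Remark~\ref{R:e}. So the real case is $x\in\partial C\cap\mathrm{int}_C B$. Here $x\in\partial B$, and there is $\rho>0$ with $C\cap B(x,\rho)\subset B$. We must find a neighbourhood $V$ of $x$ with $V\subset K(y,B)$ for every $y\in\partial_C B$. Points of $V\cap C$ are not the issue, since they lie in $B\subset K(y,B)$. The issue is uniformity over all $y\in\partial_C B$ for the points of $V\setminus C$.

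\textbf{Step 1 (separating $y$ from $x$).} Every $y\in\partial_C B=\overline{(\mathrm{int}\,C)\cap\partial B}$ satisfies $\|y-x\|\ge\rho$. Indeed, if $y'\in\mathrm{int}\,C\cap\partial B$ were in $U(x,\rho)$, then a neighbourhood of $y'$ would lie in $C\cap B(x,\rho)\subset B$, contradicting $y'\in\partial B$. Passing to closures gives the bound.

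\textbf{Step 2 (uniform strict support).} Fix $y\in\partial_C B$ and $g\in\Sigma_1(y,B)$. Then $g(y)=\sup g(B)\ge g(x)$. I claim $g(y)-g(x)\ge\kappa$ for some $\kappa>0$ independent of $y$ and $g$. Since $C\cap B(x,\rho)\subset B$, we get $g\le g(y)$ on $C\cap B(x,\rho)$.

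Use rotundity via the modulus from Remark~\ref{R:modulus}. Take a point $z\in\partial C$ with $\|z-x\|=\epsilon$, where $\epsilon<\rho$ is small and fixed. Its midpoint $m=\frac{x+z}{2}$ satisfies $B(m,\delta)\subset C$ with $\delta=\delta_C(x,\epsilon)>0$, which is positive by (a) of that remark. The ball also lies in $B(x,\rho)$, hence in $B$. Thus $g(m)+\delta\le g(y)$, since $\|g\|=1$.

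To pass from $m$ to $x$, I would choose $z$ cleverly or average. A cleaner route: pick any fixed $w\in\mathrm{int}\,C$ with $\|w-x\|<\rho$, and for each $g$ compare $g(x)$ with $g$ on the segment. Concretely, take a ball $B(w,s)\subset C\cap B(x,\rho)\subset B$ and consider the points $x+t(u)$ with $u$ ranging over directions. Rotundity and finite dimension should give a compact family of midpoints $m$ surrounding $x$ on the $C$-side, enough to force $g(x)\le g(y)-\kappa$. If this averaging gets messy, argue by contradiction instead. Take $y_n,g_n$ with $g_n(y_n)-g_n(x)\to0$, and pass to a subsequence $g_n\to g$ in $S_{X^*}$ by compactness. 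The limit $g$ satisfies $g\le g(x)$ on $C\cap B(x,\rho)$, a neighbourhood of $x$ in $C$. By convexity $g$ then supports $C$ at $x$ globally, and $g(x)=\lim g_n(y_n)$. Since $g_n\le g_n(y_n)$ on $B$, and $B$ contains $C\cap B(x,\rho)$, which is a neighbourhood of $x$ in $C$, the uniform-in-$n$ estimate together with rotundity of $C$ forces the face $\{g=g(x)\}\cap C=\{x\}$. Yet the points $y_n$ (or the segments $[x,y_n]$, which meet $B(x,\rho)$ in a length of at least $\rho$) accumulate on that face in a segment of length at least $\rho$ in the limit whenever $y_n$ stays bounded. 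This contradicts rotundity. If $y_n$ is unbounded, use the segments $[x,y_n]\cap B(x,\rho)\subset B$ together with $g_n\le g_n(y_n)$ on $[x,y_n]$. The limit yields a segment from $x$ of length $\rho$ in direction $\lim(y_n-x)/\|y_n-x\|$ lying in $C\cap\{g=g(x)\}$, again contradicting rotundity.

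\textbf{Step 3 (conclusion).} With $\kappa$ in hand, every $x'$ with $\|x'-x\|<\kappa$ satisfies $g(x')<g(x)+\kappa\le g(y)$ for all $y,g$ as above. Hence $x'\in K(y,B)$, so $B(x,\kappa)\subset e(B)$. The main obstacle is Step 2, in particular making the compactness/rotundity contradiction rigorous, including the case of unbounded $\partial_C B$. The segment-in-limit argument is what I would try first.
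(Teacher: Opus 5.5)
Your proof is correct but takes a different route from the paper's. Your reduction to $x\in\partial C$, Step~1 and Step~3 coincide with the paper. The difference is Step~2, and there only your contradiction argument is an actual proof: the ``averaging'' route is never carried out and should be dropped.

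The paper gets the gap quantitatively from the modulus of local rotundity, using the midpoint of $x$ and $y$ itself. Choosing $\epsilon$ with $B(x,\epsilon)\cap C\subset B$ gives $\delta:=\delta_C(x,\epsilon)=\delta_B(x,\epsilon)>0$. Each $y\in\partial_C B$ is a point of $B$ with $\|x-y\|>\epsilon$, so the ``$y\in C$, $\|x-y\|\ge\epsilon$'' form of \eqref{E:dc}, applied to $B$, gives $B(\frac{x+y}2,\delta)\subset B$. Hence $\frac{g(x)+g(y)}2+\delta\le g(y)$, i.e.\ $g(x)\le g(y)-2\delta$, and $B(x,2\delta)\subset e(B)$.

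Your first attempt closes the same way if you take $z:=x+\epsilon\frac{y-x}{\|y-x\|}\in[x,y]$ instead of an arbitrary $z\in\partial C$. Then $g(m)=g(x)+\frac{\epsilon}{2\|y-x\|}(g(y)-g(x))$, and $g(m)+\delta\le g(y)$ forces $g(y)-g(x)\ge\delta$. However, this $z$ lies in $\mathrm{int}\,C$, so you would again need the non-obvious equalities in \eqref{E:dc} that the paper imports from \cite{DEVE_loc_moduli}.

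Your compactness argument bypasses the modulus altogether, but make the limit step explicit. Let $u_n:=\frac{y_n-x}{\|y_n-x\|}\to u$ and $g_n\to g$. Since $g_n$ is affine on $[x,y_n]$ and $g_n(y_n)\ge g_n(x)$, you get $g_n(x)\le g_n(x+tu_n)\le g_n(y_n)$ for $t\in[0,\rho]$. You only stated the upper bound; the lower bound is what makes $g\equiv g(x)$ on $[x,x+\rho u]\subset C$. Meanwhile $g\le g(x)$ on $C\cap B(x,\rho)$, hence on $C$, so $[x,x+\rho u]\subset C\cap[g=g(x)]\subset\partial C$. This also makes your bounded/unbounded case split unnecessary.

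As for what each route buys:
\begin{itemize}
\item Yours is self-contained. It effectively inlines the compactness argument behind Remark~\ref{R:modulus}(a) and avoids the equalities in \eqref{E:dc}, but it gives no information on $\kappa$.
\item The paper's gives the explicit radius $2\delta_C(x,\epsilon)$. The same modulus mechanism, made uniform over base points via $\inf_{u\in\partial C\cap H}\delta_C(u,\epsilon)$, is what drives the proof of Proposition~\ref{P:contenimento}, where a radius independent of the boundary point is essential.
\end{itemize}
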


\begin{proof}
Consider an arbitrary $x\in\mathrm{int}_C B$. If $x\in\mathrm{int}\,C$ then clearly $x\in \mathrm{int}\, e(B)$
(since $x\in\mathrm{int}\,B$ and $B\subset e(B)$). 
So let $x\in\partial C$. Let $C$ contain a closed ball of radius $r>0$.
Then there exists $\epsilon\in(0,2r)$ such that $B(x,\epsilon)\cap C\subset B$,
and hence $d(x,\partial_C B)>\epsilon$. Notice that $\delta:=\delta_C(x,\epsilon)=\delta_B(x,\epsilon)>0$.

Now, for each $y\in\partial_C B$ and each $g\in\Sigma_1(y,B)$, we have $B(\frac{x+y}2,\delta)\subset B$,
and hence $g(\frac{x+y}2)+\delta\le\max g(B)=g(y)$. This easily implies that
$g(x)\le g(y)-2\delta$. Consequently, $B(x,2\delta)\subset e(B)$.
\end{proof}

\begin{proposition}\label{P:contenimento}
Let $\mathrm{dim}\,X=d\ge2$. Let $B_1\subsetneq B_2\subsetneq C$ be convex bodies in $X$,
and let the body $C$ be rotund, unbounded and without any asymptotic direction.
If $B_1 \subset \mathrm{int}_C B_2$ then 
$$e(B_1) \subset \mathrm{int}\,e(B_2).$$
\end{proposition}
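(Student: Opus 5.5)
We have to show that every $x\in e(B_1)$ lies in $\mathrm{int}\,e(B_2)$. If $x\in B_1$, then $x\in\mathrm{int}_C B_2$, and Lemma~\ref{L:primo} applied to $B_2\subsetneq C$ gives $x\in\mathrm{int}\,e(B_2)$. So the real case is $x\in e(B_1)\setminus C$, and the plan is to obtain a quantitative separation: a constant $\eta>0$ such that
$$
g(x)\le g(y)-\eta\qquad\text{for all } y\in\partial_C B_2 \text{ and all } g\in\Sigma_1(y,B_2).
$$
This inequality gives $B(x,\eta)\subset e(B_2)$ directly from the definition \eqref{E:e}, because each $g$ involved has norm one.

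\emph{Step 1: reduce to a compact piece of $\partial_C B_2$.} Fix $x\in e(B_1)\setminus C$. By Lemma~\ref{L:Gamma}, $c(x,C)$ is closed and $\Gamma_C(x)$ is bounded, so $K:=\mathrm{conv}[\Gamma_C(x)\cup\{x\}]$ is compact. Let $y\in\partial_C B_2\setminus K$.
\begin{itemize}
\item If also $y\notin\mathrm{conv}[\Gamma_{B_2}(x)\cup\{x\}]$, Lemma~\ref{L:good} applied to $B_2$ gives $x\in K(y,B_2)$, i.e.\ $g(x)\le g(y)$, but with no margin.
\item To get a margin, I would use the rotundity of $C$, which makes $B_1$ rotund near $\partial C$, together with the argument of Lemma~\ref{L:good} applied to $B_1$.
\end{itemize}
A cleaner route, which I would try first, is the following. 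Since $B_1\subset\mathrm{int}_C B_2$ and $B_1$ is closed, for every bounded region $W$ we have $d\bigl(B_1\cap W,\,\partial_C B_2\bigr)>0$. The difficulty is that $B_1$ may be unbounded.

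\emph{Step 2: use Lemma~\ref{L:5.3.bis} to compare supporting functionals.} Take $y\in\partial_C B_2$ and $g\in\Sigma_1(y,B_2)$, and suppose $g(x)>g(y)-\eta$. Then the segment $[x,y]$ meets $B_1$ at some point $b$; this is clear if $y\in B_1$, and follows from Lemma~\ref{L:5.3.bis} if $y\in C\setminus B_1$ ($C$ contains no line, being rotund and proper). Write $b=(1-\lambda)x+\lambda y$. Then
$$
g(b)=(1-\lambda)g(x)+\lambda g(y)>g(y)-(1-\lambda)\eta\ge g(y)-\eta ,
$$
so $b\in B_1$ lies very close to the supporting hyperplane $[g=g(y)]$ of $B_2$. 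On the other hand, $b\in\mathrm{int}_C B_2$, and a ball of fixed radius around $b$ intersected with $C$ stays inside $B_2$. Rotundity of $C$, through $\delta_C$ as in the proof of Lemma~\ref{L:primo}, should then force $g(b)\le g(y)-2\delta$ for some $\delta>0$, which contradicts the previous display once $\eta<2\delta$.

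\emph{Main obstacle: uniformity of $\delta$.} For $\delta$ to be uniform in $y$ (and in $b$) we need all relevant points $b$, and ideally $y$, to lie in a bounded set.
\begin{itemize}
\item \emph{Bounding $y$.} For $y$ far away I expect to use Lemma~\ref{L:good}-type geometry: outside the compact set $K$, the cone $c(x,C)$ and the absence of asymptotic directions (Corollary~\ref{C:adir}) should give $g(y)-g(x)\ge$ a constant. Indeed, such a $y$ is seen from $x$ strictly inside the cone, so $g(x)<g(y)$, and a limiting/compactness argument over directions should make this uniform.
\item \emph{Bounding $b$.} For $y$ in a bounded region, $b\in[x,y]$ is automatically bounded. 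Remark~\ref{R:modulus} then gives a uniform $\delta_C$, and the compactness of $B_1\cap W$ gives a uniform distance from $\partial_C B_2$, hence a uniform $\epsilon$.
\end{itemize}
Combining the two regimes through a contradiction argument with sequences $y_n,g_n$ (passing to convergent subsequences in finite dimension) should yield the desired $\eta>0$.
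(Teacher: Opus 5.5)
Your framework matches the paper's: the case $x\in B_1$ via Lemma~\ref{L:primo}, and for $x\in e(B_1)\setminus C$ a margin $g(y)-g(x)\ge\eta$ uniform over $y\in\partial_C B_2$ and $g\in\Sigma_1(y,B_2)$. You also take a point of $B_1$ on $[x,y]$ from Lemma~\ref{L:5.3.bis} and use rotundity through $\delta_C$. However, the step that carries the whole proposition, \emph{uniformity}, is not carried out, and the route you sketch for it does not work as stated.

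For far $y$ you argue that $y\in\mathrm{int}\,c(x,C)$ gives $g(x)<g(y)$, and that ``a limiting argument over directions'' makes this uniform. Lemma~\ref{L:good} concerns functionals in $\Sigma_1(y,C)$ at points $y\in\partial C$. Here $y$ may lie in $\mathrm{int}\,C$, and $g$ only supports $B_2$. More importantly, a strict pointwise inequality gives no uniform margin. Along $y_n\to\infty$, the points of $B_1$ on $[x,y_n]$ could a priori escape to infinity, where neither $\delta_C(\cdot,\epsilon)$ nor the distance from $B_1$ to $\partial_C B_2$ is bounded away from $0$. An unbounded rotund body need not be uniformly rotund, and $d(B_1,\partial_C B_2)$ may well be $0$. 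Your Step~2 also needs its base point on $\partial C$: for $b\in\mathrm{int}\,C$ the mechanism of Lemma~\ref{L:primo} is unavailable, and $d(b,\partial B_2)$ can be arbitrarily small.

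The missing idea is to use as base point the \emph{entry point} $z$ of $[x,y]$ into $C$.
\begin{enumerate}
\item Since $[x,b]\subset e(B_1)$ and $e(B_1)\cap C=B_1$, you get $z\in B_1\cap\partial C$.
\item Rotundity gives a point $y''\in(b,y)\cap\mathrm{int}\,C$. So $z\in(x,y'')$, and every $g'\in\Sigma_1(z,C)$ satisfies $g'(x)>g'(z)$; that is, $x\notin K(z,C)$.
\item Lemma~\ref{L:good} then forces $z\in\mathrm{conv}[\Gamma_C(x)\cup\{x\}]$. By Lemma~\ref{L:Gamma} this set is compact and independent of $y$.
\item Take $h\in S_{X^*}$ with $h(x)<\inf h(C)$, and $\gamma$ with $\Gamma_C(x)\subset[h\le\gamma]_X$, and put $H:=[h\le\gamma]_X$. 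Every base point $z$ lies in $H\cap C$, which is compact by Lemma~\ref{L:bdd_sublevels}.
\item Hence $d:=\mathrm{dist}(B_1\cap H,\partial_C B_2)>0$, and by Remark~\ref{R:modulus} $\bar\delta:=\inf_{u\in\partial C\cap H}\delta_C(u,\epsilon)>0$ for $0<\epsilon<d$. Both constants are independent of $y$.
\item Take $v\in(z,y)$ with $\|v-z\|=\epsilon$ and $w=\frac{z+v}2$. Then $B(w,\bar\delta)\subset C$ and $d(w,\partial_C B_2)>\epsilon/2\ge\bar\delta$, so $B(w,\bar\delta)\subset B_2$.
\item Since $w\in(x,y)$, this gives $B(x,\bar\delta)\subset K(y,B_2)$ for every $y\in\partial_C B_2$, which is the required margin.
\end{enumerate}
No splitting into near and far $y$ is needed.
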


\begin{proof}
Consider an arbitrary $x_0\in e(B_1)$. If $x_0\in B_1$ then $x_0\in \mathrm{int}_C B_2\subset \mathrm{int}\,e(B_2)$
(Lemma~\ref{L:primo}) and we are done. So let us assume that
$$
x_0\in e(B_1)\setminus B_1 \equiv e(B_1)\setminus C.
$$
By the Hahn-Banach theorem,
there exists $h\in S_{X^*}$ such that $m:=\inf h(C)>h(x_0)$.

By Lemma~\ref{L:Gamma}, the set $\Gamma_C(x_0)$ (defined therein) is bounded.
Thus there exists $\gamma>m$ such that $\Gamma_C(x_0)\subset [h\le \gamma]_C$.
In particular, $\Gamma_C(x_0)\cup\{x_0\}\subset [h\le\gamma]_X$.
By Lemma~\ref{L:bdd_sublevels}, $[h\le\gamma+1]_C$ is compact. 
Consider the halfspace $H:=[h\le \gamma+1]_X$.
Since $B_1\cap H$ is compact and disjoint from $\partial_C B_2$, we have
$d:=\mathrm{dist}(B_1\cap H, \partial_C B_2)>0$.

Fix an arbitrary $y\in \partial_C B_2$. Since $y\notin B_1$ and $x_0\in e(B_1)$, 
Lemma~\ref{L:5.3.bis} implies existence of
$y'\in B_1\cap (x_0,y)$. Since $C$ is rotund and $y,y'\in C$ are distinct, there exists
$y''\in(y',y)\cap\mathrm{int}\,C$. So on the half-line from $x_0$ through $y$, 
the points are put in this order:
\begin{equation}\label{E:punti}
x_0,\quad y',\quad y'', \quad y\,.
\end{equation}
Thus $(x_0,y'')\cap\partial C=\{z_1\}$ where $z_1\in(x_0,y']$. Since $x_0,y'\in e(B_1)$,
we have $z_1\in e(B_1)\cap\partial C\equiv B_1\cap\partial C$. 
It is clear from the ordering \eqref{E:punti} with $z_1\in(x_0,y'')$ that $x_0$
cannot belong to the cone $K(z_1,C)$.
By Lemma~\ref{L:good}, $z_1\in\mathrm{conv}\bigl[\Gamma_C(x_0)\cup\{x_0\}\bigr]$.
In particular, $h(z_1)\le\gamma$.

Fix some $0<\epsilon<\min\{d,1\}$. Since $\|y-z_1\|\ge d>\epsilon$,
there exists a unique $v\in(z_1,y)$ with $\|v-z_1\|=\epsilon$.
Notice that $h(v)\le h(z_1)+\epsilon<\gamma+1$. Since
$\delta_C(z_1,\epsilon)\ge\bar\delta:=\inf_{u\in \partial C\cap H} \delta_C(u,\epsilon)>0$, 
if we define $w=\frac{z_1+v}{2}$
 then we have $B(w,\bar\delta)\subset C$. Notice that
$$
d(w,\partial_C B_2)\ge d(z_1,\partial_C B_2)-\|w-z_1\| > \epsilon-(\epsilon/2)=\epsilon/2\ge\bar{\delta}.
$$
Consequently, $B(w,\bar\delta)\cap\partial_C B_2=\emptyset$. Since $w\in B_2$, we get
$B(w,\bar\delta)=B(w,\bar\delta)\cap C\subset B_2$. It follows that
the cone $K(y, B_2)$ contains the cone $c(y,B(w,\bar\delta))$. Since
$w\in(x_0,y)$, we obtain that $K(y,B_2)$ contains $B(x_0,\bar\delta)$.
Since $\bar\delta$ does not depend on $y$, we have
that $e(B_2)$ contains the
ball $B(x_0,\bar\delta)$. We are done.
\end{proof}

\begin{theorem}\label{T:ext_contQC}
Let $\mathrm{dim}\,X=d\ge2$. Let $C\subset X$ be an unbounded, rotund, convex 
body with no asymptotic directions.
Then each continuous QC function $f\colon C\to\R$ admits a continous QC
extension $F\colon X\to\R$.
\end{theorem}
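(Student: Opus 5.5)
We build $F$ from the sublevel sets of $f$, extending each one by the operator $e$ of Definition~\ref{D:e}. This follows the scheme of \cite{DEVEqc_UC}, with Propositions~\ref{P:riempimento}, \ref{P:contenimento} and Lemma~\ref{L:intersezione} replacing the uniform-rotundity arguments used there.

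\emph{Reduction.} First, we may assume $f$ is not constant and, composing with an increasing homeomorphism $\R\to(0,1)$ (or a bounded interval), that $f$ takes values in a bounded interval. Quasiconvexity and continuity are preserved, and we undo the composition at the end. Fix a countable dense set $Q$ of levels in $(\inf f,\sup f)$. For $q\in Q$ let $S_q:=\overline{[f<q]_C}$. Since $f$ is continuous and QC, $[f<q]_C$ is a relatively open convex subset of $C$; when nonempty it has nonempty interior, so $S_q$ is either empty or a convex body with $S_q=\overline{\mathrm{int}\,S_q}$. If $S_q=C$ we set $e(S_q)=X$.

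\emph{Monotonicity.} The key structural property is strict monotonicity: for $q<q'$ in $Q$ we have $S_q\subset[f\le q]_C\subset[f<q']_C\subset\mathrm{int}_C S_{q'}$. Proposition~\ref{P:contenimento} then gives $e(S_q)\subset\mathrm{int}\,e(S_{q'})$; the degenerate cases, where $S_q=\emptyset$ or $S_{q'}=C$, are trivial. We define
$$F(x):=\inf\{q\in Q: x\in e(S_q)\},$$
with the convention $\inf\emptyset=\sup f$; the boundedness of $f$ is what makes this convention available.

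\emph{Properties of $F$.} The argument then proceeds in four steps.
\begin{enumerate}[(i)]
\item $F$ is finite and QC. Each set $[F<\alpha]_X=\bigcup_{q<\alpha}e(S_q)$ is an increasing union of convex sets, hence convex.
\item $F$ extends $f$. This uses $e(B)\cap C=B$ (Remark~\ref{R:e}) together with density of $Q$ and continuity of $f$.
\item $F$ is upper semicontinuous. By the strict inclusions, $[F<\alpha]_X=\bigcup_{q<\alpha}\mathrm{int}\,e(S_q)$ is open.
\item $F$ is lower semicontinuous. Here $[F\le\alpha]_X=\bigcap_{q>\alpha}e(S_q)$ is an intersection of closed sets, hence closed.
\end{enumerate}
Together, (iii) and (iv) give continuity of $F$.

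\emph{Attainment at the extremes.} It remains to check that $F$ attains the extreme values correctly, and this is where the geometric hypotheses on $C$ enter. If $\sup f$ is not attained, then the sets $S_q$ exhaust $C$ in the sense of Observation~\ref{O:riempimento} as $q\nearrow\sup f$: every compact piece $C\cap rB_X$ has $f$ bounded by a value below $\sup f$. Proposition~\ref{P:riempimento} then yields $\bigcup_q e(S_q)=X$, so $F<\sup f$ everywhere, consistent with the values of $f$.

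Symmetrically, suppose $\inf f$ is not attained. The decreasing family $S_q$ as $q\searrow\inf f$ then has empty intersection. Lemma~\ref{L:intersezione} gives $\bigcap_q e(S_q)=\emptyset$, so no point of $X$ gets the value $\inf f$, which is needed for continuity of $F$ after undoing the reparametrization. If $\inf f$ is attained, the set $[f\le\inf f]_C$ may have empty interior. In that case we handle it directly: $F=\inf f$ exactly on $\bigcap_q e(S_q)$, which is a closed convex set, so quasiconvexity and lower semicontinuity persist.

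I expect the main obstacle to be the strict inclusion $e(S_q)\subset\mathrm{int}\,e(S_{q'})$, which is what yields upper semicontinuity. Proposition~\ref{P:contenimento} demands $B_1\subset\mathrm{int}_C B_2$ exactly, and unboundedness makes the continuity argument nonuniform, so the verification of $S_q\subset\mathrm{int}_C S_{q'}$ from continuity of $f$ must be done carefully. Checking the remaining semicontinuity and extension steps is routine.
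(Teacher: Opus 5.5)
Your proposal is correct and follows essentially the paper's proof. It uses the same sets $\overline{[f<\alpha]_C}$ and their extensions $e(\cdot)$, Proposition~\ref{P:contenimento} for the strict nesting, and Proposition~\ref{P:riempimento} and Lemma~\ref{L:intersezione} for covering and empty intersection. Your level-set formula for $F$ is equivalent to the paper's $F(x)=\sup\{\alpha: x\notin\mathrm{int}\,e(B_\alpha)\}$; the paper cites \cite[Proposition~2.5]{DEVEqc_UC} where you check semicontinuity by hand. The reparametrization to bounded values and the countable set of levels are harmless but unnecessary. Also, the inclusion $S_q\subset\mathrm{int}_C S_{q'}$ you flag as the main obstacle is immediate, as your own monotonicity paragraph shows, because $[f<q']_C$ is relatively open in $C$.
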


\begin{proof}
For each $\alpha\in\R$, the set
$$
A_\alpha:=[f<\alpha]_C
$$
is convex and relatively open in $C$. Define $B_\alpha:=\overline{A}_\alpha$. Then
$$
B_\alpha \subset \mathrm{int}_C B_\beta \quad\text{whenever $\alpha<\beta$.}
$$
By Proposition~\ref{P:contenimento}, 
$$
e(B_\alpha)\subset\mathrm{int}\, e(B_\beta)\quad\text{whenever $\alpha<\beta$.}
$$
Since clearly $\bigcup_{\alpha\in\R}\mathrm{int}_C B_\alpha=C$, the set $B_\alpha$ 
has nonempty interior (even in $X$) for each sufficiently large
$\alpha$, and hence we can apply Proposition~\ref{P:riempimento} to conclude that
$$
X=\bigcup_{\alpha\in\R}e(B_\alpha)=\bigcup_{\alpha\in\R}\mathrm{int}[e(B_\alpha)].
$$
Moreover, since clearly $B_\alpha\subset [f\le\alpha]_C$, we have $\bigcap_{\alpha\in\R}B_\alpha=\emptyset$.
An easy application of Lemma~\ref{L:intersezione} gives $\bigcap_{\alpha\in\R}e(B_\alpha)=\emptyset$.
Therefore $\bigcap_{\alpha\in\R}\mathrm{int}_X [e(B_\alpha)]=\emptyset$.

Now, let us define $F\colon X\to\R$ by
$$
F(x)=\sup\bigl\{
\alpha\in\R: x\notin \mathrm{int}\,e(B_\alpha)
\bigr\}.
$$
By \cite[Proposition~2.5]{DEVEqc_UC}, $F$ is a continuous QC extension of $f|_{\mathrm{int}\,C}$ to the whole space.
By continuity, $F|_C=f$.
\end{proof}

\medskip

\section{Extension without continuity}

\begin{proposition}\label{P:solocontenimento}
	Let $\mathrm{dim}\,X=d\ge2$. Let $B_1\subset B_2\subset C$ be convex bodies in $X$,
	and let  $C$ contain no lines. Then 
	$e(B_1) \subset e(B_2)$.
\end{proposition}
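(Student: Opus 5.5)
Trivial cases first. If $B_1=B_2$ there is nothing to prove, and if $B_2=C$ then $e(B_2)=X$. So assume $B_1\subsetneq B_2\subsetneq C$ and take $x\in e(B_1)$; we must show $x\in K(y,B_2)$ for every $y\in\partial_C B_2$. If $x\in C$, then $x\in e(B_1)\cap C=B_1\subset B_2\subset e(B_2)$ by Remark~\ref{R:e}, and we are done. So the remaining case is $x\in e(B_1)\setminus C$.

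For this case, fix $y\in\partial_C B_2$ and $g\in\Sigma_1(y,B_2)$; the goal is $g(x)\le g(y)$. By Remark~\ref{R:e} and Fact~\ref{F:convex}, it suffices to treat $y\in(\mathrm{int}\,C)\cap\partial B_2$, since $K$-membership passes to the closure by a limiting argument. If $y\in B_1$, then $y\in\partial_C B_1$ (it lies in $\mathrm{int}\,C$ and on $\partial B_1$, because $B_1\subset B_2$). Since $\sup g(B_1)\le\sup g(B_2)=g(y)$ with equality at $y$, we get $g\in\Sigma_1(y,B_1)$, hence $x\in K(y,B_1)$, i.e.\ $g(x)\le g(y)$.

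If instead $y\notin B_1$, then $y\in C\setminus B_1$ and $x\in e(B_1)\setminus C$. Lemma~\ref{L:5.3.bis}, which needs only that $C$ contains no line, gives a point $y'\in[x,y]\cap B_1$, with $y'\ne y$. Write $y'=(1-\lambda)x+\lambda y$ with $\lambda\in[0,1)$; in fact $\lambda>0$ because $x\notin C$. Since $y'\in B_1\subset B_2$, we have $g(y')\le g(y)$, that is, $(1-\lambda)g(x)+\lambda g(y)\le g(y)$. This gives $(1-\lambda)g(x)\le(1-\lambda)g(y)$, so $g(x)\le g(y)$. Hence $x\in K(y,B_2)$ for all $y$, and therefore $x\in e(B_2)$.

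The main obstacle is the boundary case, where $y\in\partial_C B_2$ lies on $\partial C$ rather than in $\mathrm{int}\,C$. There the direct argument still works whenever $y\notin B_1$, because Lemma~\ref{L:5.3.bis} applies to any $y\in C\setminus B_1$. If instead $y\in B_1\cap\partial C$, we use the first expression in \eqref{E:e}, which allows us to restrict to $y\in\overline{(\mathrm{int}\,C)\cap\partial B_2}$. We take $y_n\in(\mathrm{int}\,C)\cap\partial B_2$ with $y_n\to y$ and $g_n\in\Sigma_1(y_n,B_2)$. Passing to a subsequence, $g_n\to g\in\Sigma_1(y,B_2)$, and every supporting functional at $y$ arises in this way only up to closure. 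To avoid this issue, I would instead show directly that $x\in\bigcap_{y\in(\mathrm{int}\,C)\cap\partial B_2}K(y,B_2)$ and then check that this intersection equals $e(B_2)$. The inclusion $\supset$ is immediate. For the inclusion $\subset$, take a limit point $y$ with $g\in\Sigma_1(y,B_2)$ and approximate the supporting hyperplane via nearby points: points of $\mathrm{int}\,C$ on $\partial B_2$ near $y$ have supporting functionals that accumulate, by an upper semicontinuity argument, onto a subset of $\Sigma_1(y,B_2)$. This is the delicate point, and I would handle it by treating $y\in B_1\cap\partial C$ directly, noting that $y\in\partial_C B_1$ since it is a limit of points of $\partial_C B_2$ outside $B_1$ or in $\partial B_1$, and then applying $g\in\Sigma_1(y,B_1)$ as in the second paragraph.
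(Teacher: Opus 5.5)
Your core argument is the paper's. The paper also reduces to $x\in e(B_1)\setminus C$ via Remark~\ref{R:e}, and splits on whether $y\in\partial_C B_2$ lies in $B_1$: if so, $y\in\partial_C B_1$ and the functional lies in $\Sigma_1(y,B_1)$; if not, Lemma~\ref{L:5.3.bis} gives a point of $B_1$ on $(x,y)$. The paper merely phrases this as a contradiction, whereas you argue directly.

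However, the reduction in your second paragraph is false: it is not enough to treat $y\in(\mathrm{int}\,C)\cap\partial B_2$ on the grounds that ``$K$-membership passes to the closure''. Take $C=[0,2]^2\subset\R^2$ with coordinates $(u,v)$ and $B_2=[0,1]\times[0,2]$. The intersection of the sets $K(y,B_2)$ over $y\in\{1\}\times(0,2)$ is the half-plane $\{u\le 1\}$, whereas $e(B_2)=\{u\le 1,\ 0\le v\le 2\}$. The normal cones at the corners $(1,0)$ and $(1,2)$ are strictly larger than the limits of nearby normals. So that reduction, and your later discussion of accumulating supporting functionals, should be deleted.

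Nothing is lost by deleting them. Neither of your two cases uses $y\in\mathrm{int}\,C$. Moreover, for any $y\in B_1\cap\partial_C B_2$ you get $y\in\partial_C B_1$ at once, with no approximation: every relative neighbourhood of $y$ in $C$ meets $C\setminus B_2\subset C\setminus B_1$. With that, your two cases cover every $y\in\partial_C B_2$ directly, exactly as in the paper.
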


\begin{proof}
	
	Suppose on the contrary that there exists $x\in e(B_1)\setminus e(B_2)$. Then:
	\begin{enumerate}
		\item $x\not\in C$ (by Remark~\ref{R:e});
		\item there exist $y\in\partial_C B_2$ and $f\in S_{X^*}$ such that 
		$$\theta:=\sup f(B_2)=f(y)<f(x).$$
	\end{enumerate}
We claim that $y\not\in B_1$. Indeed, if $y\in B_1$ then necessarily $y\in\partial_C B_1$ and $f\in K(y,B_1)$, and hence
$x\not\in e(B_1)$, a contradiction. Now, 
by Lemma~\ref{L:5.3.bis}  there exists $z\in (x,y)\cap B_1$.  In particular, we have 
	$$f(z)>\theta= \sup f(B_2)\geq\sup f(B_1)\geq f(z),$$
and this contradiction completes the proof.
\end{proof}

\begin{theorem}\label{T:ext_contQCsenza rotund}
	Let $\mathrm{dim}\,X=d\ge2$. Let $C\subset X$ be a convex 
	body with no asymptotic directions.
	Then each continuous QC function $f\colon C\to\R$ admits an upper semi-continuous QC
	extension $F\colon X\to\R$.
\end{theorem}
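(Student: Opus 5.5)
We would imitate the proof of Theorem~\ref{T:ext_contQC}, using closed sublevel-type bodies and replacing Proposition~\ref{P:contenimento} by the weaker monotonicity of Proposition~\ref{P:solocontenimento}.

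\textbf{Step 0 (no lines).} Proposition~\ref{P:solocontenimento} and Lemma~\ref{L:5.3.bis} require that $C$ contain no lines. So we first observe that a convex body with no asymptotic directions contains no line. If $C\supset c+\R v$, then $C=C+\R v$. Taking any boundary point $c_0$, the line $c_0+\R v$ lies in $\partial C$ and misses $\mathrm{int}\,C$, while its distance to $C$ is zero. Hence $v$ would be an asymptotic direction.

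\textbf{Step 1 (the bodies).} For $\alpha\in\R$ put $A_\alpha=[f<\alpha]_C$, which is convex and relatively open in $C$, and $B_\alpha=\overline{A_\alpha}$. By continuity of $f$ we have $[f\le\alpha]_C\subset\mathrm{int}_C B_\beta$ for $\alpha<\beta$. Let $\alpha_0=\inf\{\alpha: \mathrm{int}\,A_\alpha\neq\emptyset\}$. For $\alpha>\alpha_0$ the set $B_\alpha$ is a convex body, or equals $C$; set $e(B_\alpha)=\emptyset$ for $\alpha\le\alpha_0$. By Proposition~\ref{P:solocontenimento}, the family $\alpha\mapsto e(B_\alpha)$ is nondecreasing. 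Since $C=\bigcup_\alpha\mathrm{int}_C B_\alpha$, Observation~\ref{O:riempimento} and Proposition~\ref{P:riempimento} (applied along a sequence $\alpha_n\nearrow\sup f$) give $X=\bigcup_\alpha e(B_\alpha)$. If $\sup f<\infty$ and the union over $\alpha<\sup f$ fails to cover, we will simply let $F$ take the value $\sup f$ there.

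\textbf{Step 2 (the candidate $F$).} Define
$$
F(x)=\inf\{\alpha\in\R: x\in e(B_\alpha)\}.
$$
\begin{itemize}
\item $F$ is not identically $-\infty$: the argument of Lemma~\ref{L:intersezione} gives $\bigcap_\alpha e(B_\alpha)=\emptyset$ when $C$ is unbounded. When $C$ is bounded the bodies eventually become empty, so this is automatic.
\item $F$ is QC: by monotonicity, $[F<\gamma]_X=\bigcup_{\alpha<\gamma}e(B_\alpha)$ is an increasing union of convex sets, hence convex.
\item $F$ is upper semicontinuous: we need $[F<\gamma]_X$ to be open, or equivalently $[F\ge\gamma]_X$ closed. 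Using that each $e(B_\alpha)$ is closed, $[F\le\gamma]_X=\bigcap_{\beta>\gamma}e(B_\beta)$ is closed, so this definition gives lower semicontinuity instead. For upper semicontinuity we switch to the interior-based formula
$$
F(x)=\sup\{\alpha: x\notin\mathrm{int}\,e(B_\alpha)\}.
$$
Then $[F<\gamma]_X=\bigcup_{\alpha<\gamma}\mathrm{int}\,e(B_\alpha)$ is open, which gives upper semicontinuity, and it is convex as a union of a nested family of convex sets.
\end{itemize}

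\textbf{Step 3 (main obstacle: $F=f$ on $C$).} Using $e(B)\cap C=B$ (Remark~\ref{R:e}), we have $\mathrm{int}\,e(B_\alpha)\cap C\subset B_\alpha\subset[f\le\alpha]_C$, so $F\ge f$ on $C$. The reverse inequality requires $[f\le\alpha]_C\subset\mathrm{int}\,e(B_\beta)$ for $\beta>\alpha$. Without rotundity Lemma~\ref{L:primo} is unavailable, and flat boundary pieces may prevent boundary points of $C$ from lying in $\mathrm{int}\,e(B_\beta)$.

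To get around this, we would not require full extension via interiors. Instead we define $F=f$ on $C$ and use the interior formula only off $C$: $F(x)=\sup\{\alpha: x\notin\mathrm{int}\,e(B_\alpha)\}$ for $x\notin C$. We then verify three things:
\begin{itemize}
\item Convexity of each $[F<\gamma]_X=[f<\gamma]_C\cup\bigl(\bigcup_{\alpha<\gamma}\mathrm{int}\,e(B_\alpha)\setminus C\bigr)$. This uses $e(B_\alpha)\cap C=B_\alpha\subset[f\le\alpha]_C$, and Lemma~\ref{L:5.3.bis} for segments joining points outside $C$ to points of $C$.
\item Openness of these sets. This uses that $[f<\gamma]_C$ is relatively open in $C$ and that points of $[f<\gamma]_C$ lie in the interior of some $e(B_\beta)$ with $\beta<\gamma$.
\end{itemize}
This last inclusion, $[f<\gamma]_C\subset\bigcup_{\beta<\gamma}\mathrm{int}\,e(B_\beta)$ at boundary points of $C$ on flat faces, is where we expect the real difficulty. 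If it fails, we will accept the weaker conclusion obtained from semicontinuity, and prove only that $[F<\gamma]_X$ is convex and open.
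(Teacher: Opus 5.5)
Your proposal does not prove the statement. Step~3 leaves open exactly the inclusion that is needed. The patched definition ($F=f$ on $C$, interior formula off $C$) is in general not upper semicontinuous.

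Take $C=[0,1]^2$ and $f(x,y)=y$. For $0<\alpha<1$ one has $B_\alpha=[0,1]\times[0,\alpha]$. Because of the supporting cones at the corners $(0,\alpha)$ and $(1,\alpha)$, $e(B_\alpha)=[0,1]\times(-\infty,\alpha]$, so $\mathrm{int}\,e(B_\alpha)=(0,1)\times(-\infty,\alpha)$. The interior formula then gives $F(0,y)=1\neq f(0,y)$ for $0<y<1$. With your patch, the set $[F<\gamma]_X$ ($0<\gamma<1$) contains $(0,\gamma/2)$ but none of the points $(-\eta,\gamma/2)$, $\eta>0$. Here $F(x,y)=y$ is of course an extension, so this example only shows that the construction breaks down.

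Your diagnosis is correct, and it is exactly the step the paper's own proof does not justify. The paper uses the same formula $F(x)=\sup\{\alpha: x\notin\mathrm{int}\,e(B_\alpha)\}$ and appeals to the proof of \cite[Proposition~2.5]{DEVEqc_UC}. That argument only yields $F=f$ on $\mathrm{int}\,C$ and $F\ge f$ on $\partial C$. In Theorem~\ref{T:ext_contQC} the boundary points are handled by rotundity through Lemma~\ref{L:primo} ($\mathrm{int}_C B\subset\mathrm{int}\,e(B)$), which is unavailable here. Your Step~0, checking that $C$ contains no line so that Proposition~\ref{P:solocontenimento} applies, is a correct and necessary addition.

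In fact the gap cannot be closed: without rotundity the statement is false. Take the rectangle $C=[0,1]\times[-1,1]\subset\R^2$, which is bounded and therefore has no asymptotic directions. Let $r(x,y)=\frac{1}{2}\bigl(\sqrt{(y+\frac{1}{2})^2+4x}-(y+\frac{1}{2})\bigr)$ be the larger root of $s^2+(y+\frac{1}{2})s-x$, and set $f=\max\{0,\,y,\,\frac{1}{2}-r\}$ on $C$.

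Then $f$ is continuous, and it is QC:
\begin{itemize}
\item for $\alpha<0$, $[f\le\alpha]_C$ is empty;
\item for $\alpha\ge\frac{1}{2}$, it equals $C\cap\{y\le\alpha\}$;
\item for $0\le\alpha<\frac{1}{2}$, put $t=\frac{1}{2}-\alpha>0$. Since $r\ge t$ if and only if $t^2+(y+\frac{1}{2})t\le x$, the set equals $C\cap\{y\le\alpha\}\cap\{t^2+(y+\frac{1}{2})t\le x\}$.
\end{itemize}
Moreover $f(0,-1)=0$ and $f(0,0)=\frac{1}{2}$. Also $f(\delta,\frac{1}{3})<\frac{1}{2}$ for $0<\delta\le 1$, since $r(\delta,\frac{1}{3})>0$.

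Suppose $F$ were a USC QC extension. Then $U=[F<\frac{1}{2}]_{\R^2}$ would be open and convex. It would contain $(-\epsilon,-1)$ for some small $\epsilon>0$, together with $(\frac{\epsilon}{3},\frac{1}{3})$. Hence it would also contain $(0,0)=\frac{1}{4}(-\epsilon,-1)+\frac{3}{4}(\frac{\epsilon}{3},\frac{1}{3})$, contradicting $F(0,0)=f(0,0)=\frac{1}{2}$.

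This is a continuous version of Proposition~\ref{non_uscQC}, and it also breaks the implication (v)$\Rightarrow$(ii) of Theorem~\ref{T:char_qc}. What the construction does give is a USC QC extension of $f|_{\mathrm{int}\,C}$, with $F\ge f$ on $\partial C$. It gives a genuine extension of $f$ when $C$ is also rotund, via Lemma~\ref{L:primo}.
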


\begin{proof}
	For each $\alpha\in\R$, the set
	$$
	A_\alpha:=[f<\alpha]_C
	$$
	is convex and relatively open in $C$. Define $B_\alpha:=\overline{A}_\alpha$. Then
	$$
	B_\alpha \subset \mathrm{int}_C B_\beta \quad\text{whenever $\alpha<\beta$.}
	$$
	By Proposition~\ref{P:solocontenimento}, 
	$$
	e(B_\alpha)\subset e(B_\beta)\quad\text{whenever $\alpha<\beta$.}
	$$
	Since clearly $\bigcup_{\alpha\in\R}\mathrm{int}_C B_\alpha=C$, the set $B_\alpha$ 
	has nonempty interior (even in $X$) for each sufficiently large
	$\alpha$, and hence we can apply Proposition~\ref{P:riempimento} to conclude that
	$$
X=\bigcup_{\alpha\in\R}e(B_\alpha)=\bigcup_{\alpha\in\R}\mathrm{int}[e(B_\alpha)].
$$
The same argument as in the proof of Theorem~\ref{T:ext_contQC} gives 
$\bigcap_{\alpha\in\R}\mathrm{int}[e(B_\alpha)]=\emptyset$. 

	Now, let us define $F\colon X\to\R$ by
	$$
	F(x)=\sup\bigl\{
	\alpha\in\R: x\notin \mathrm{int}\,e(B_\alpha)
	\bigr\}.
	$$
	By the proof of \cite[Proposition~2.5]{DEVEqc_UC}, $F$ is a continuous upper semi-continuous QC extension of $f$ to the whole space.
\end{proof}

The following proposition shows that in the previous result we cannot relax the hypothesis about the continuity of $f$ 
by requiring just its upper semi-continuity.

\begin{proposition}\label{non_uscQC}
	Let $C=[0,1]\times[-1,1]\subset X:=\R^2$, then there exists a QC  upper semi-continuous function $f:C\to\R$ not admitting  QC  upper semi-continuous extension to the whole $X$.
\end{proposition}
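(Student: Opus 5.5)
We will build $f$ from a sequence of half-planes whose boundary lines pivot at exterior points $(-\epsilon_n,0)$ with $\epsilon_n\searrow0$. The aim is the mechanism of Theorems~\ref{noQC} and~\ref{T:nonR}: a QC extension is forced to take large values at exterior points accumulating at a boundary point where $f$ is small. For an upper semicontinuous function all strict sublevel sets are open, so this accumulation is exactly what upper semicontinuity forbids.

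\emph{Construction.} Choose $\epsilon_n\searrow0$ and $b_n\nearrow1$ with $0<b_n<1$. Let $L_n$ be the line through $(-\epsilon_n,0)$ and $(0,b_n)$; it has positive slope $b_n/\epsilon_n$. Let $H_n$ be the open half-plane bounded by $L_n$ that contains $(0,0)$, i.e.\ the region lying "below/right" of $L_n$. Put $A_n:=H_n\cap C$. Since $L_n$ passes through $(0,b_n)$ with $b_n<1$ and positive slope, it enters $\mathrm{int}\,C$, so $L_n\cap\mathrm{int}\,C$ is a nontrivial open segment lying on $\partial A_n$. I will choose the parameters so that the $A_n$ increase and $\bigcup_n A_n\ne C$. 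Concretely, for $n\ge2$ the lines $L_n$ and $L_{n-1}$ should not cross inside $C$; this amounts to the intercept $b_n$ growing fast enough relative to $b_n/\epsilon_n$ (e.g.\ choose $\epsilon_n$ first, then $b_n$ close enough to $1$), and in any case it is an elementary planar condition. Set $\alpha_n:=1-\frac1n$ and define
$$
f:=\alpha_n \ \text{on } A_{n+1}\setminus A_n\ (A_0:=\emptyset),\qquad f:=1 \ \text{on } C\setminus\bigcup_n A_n .
$$
Then $[f<\alpha]_C$ is $\emptyset$, or some $A_n$, or $\bigcup_nA_n$. Each of these is convex and relatively open in $C$, since the union of increasing open convex traces is again one. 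Hence $f$ is QC and upper semicontinuous, and $f(0,0)=\alpha_1<1$ because $(0,0)\in A_1$.

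\emph{Non-extendability.} Suppose $F\colon\R^2\to\R$ is a QC, upper semicontinuous extension of $f$. Then $U_n:=[F<\alpha_{n+1}]_{\R^2}$ is an open convex set with $U_n\cap C=A_n$. The argument is the same convexity argument as in Theorem~\ref{noQC} (or Lemma~\ref{lemma: quoziente dimensione 2}(a)). The open convex set $U_n$ contains points of $\mathrm{int}\,C$ on the $H_n$-side arbitrarily near the relatively open piece $L_n\cap\mathrm{int}\,C$ of $\partial A_n$, and it meets no point of $\mathrm{int}\,C$ beyond it. It follows that $U_n\subset H_n$.

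Now fix $s>0$. For all $n$ with $\epsilon_n<s$, the point $(-s,0)$ lies on the far side of $L_n$, because $L_n$ meets the $x$-axis at $-\epsilon_n>-s$. Hence $(-s,0)\notin U_n$, that is, $F(-s,0)\ge\alpha_{n+1}$. Letting $n\to\infty$ gives $F(-s,0)\ge1$ for every $s>0$. But $[F<1]$ is open and contains $(0,0)$, since $F(0,0)=f(0,0)<1$. This is a contradiction.

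The main obstacle is the bookkeeping in the construction rather than the final argument. One must check that the parameters can be chosen so that the $A_n$ are genuinely nested inside $C$. One must also check that the support claim $U_n\subset H_n$ holds, which uses that the boundary piece of $A_n$ on $L_n$ meets $\mathrm{int}\,C$. Verifying that $f$ is upper semicontinuous on the boundary edge $\{0\}\times[-1,1]$ near $(0,1)$, where the lines accumulate, is routine once the sublevel sets are identified as above.
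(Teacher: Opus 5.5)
Your proof is correct, but it takes a different route from the paper's.

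\emph{The paper's route.} The paper takes $f=0$ on $[0,1]\times[-1,0)$, $f(x,y)=y$ on $(0,1)\times[0,1]$, and $f=1$ on the rest of the two vertical edges. It then uses a \emph{single} strict sublevel set. $[F<\frac12]$ is open, so it contains a ball about the corner $(0,-1)$, hence points with $x<0$. It also contains the segment $(0,1)\times\{\frac13\}$. Convexity then forces $(0,0)\in[F<\frac12]$, against $F(0,0)=f(0,0)=1$. So the contradiction sits at an edge point where $f$ is \emph{large}, and it relies on $f$ jumping up on the edge $\{0\}\times[0,1]$.

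\emph{Your route.} You run the mechanism of Theorems~\ref{noQC} and~\ref{T:nonR}. Countably many sublevel sets are each trapped in a half-plane $H_n$ by the support argument, and their boundary lines pivot toward the line $x=0$. This forces $F\ge1$ at exterior points $(-s,0)$ accumulating at $(0,0)$, where $f$ is \emph{small*}, and openness of $[F<1]$ finishes. It costs more bookkeeping than the paper's one-step convexity argument.

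Three small repairs:
\begin{enumerate}
\item As written, $f:=\alpha_n$ on $A_{n+1}\setminus A_n$ with $A_0=\emptyset$ involves the undefined $\alpha_0$. You mean $f=\alpha_n$ on $A_n\setminus A_{n-1}$ for $n\ge1$; this is what your later claims $f(0,0)=\alpha_1$ and $U_n\cap C=A_n$ use.
\item The list of sublevel sets should also include $C$ itself, for $\alpha>1$.
\item Nestedness needs no extra choice of parameters. On $\{x\ge0\}$ both the intercept $b_n$ and the slope $b_n/\epsilon_n$ of $L_n$ increase with $n$. Hence $A_n\subset A_{n+1}$ automatically, and $\bigcup_nA_n=C\setminus\{(0,1)\}$.
\end{enumerate}

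\emph{What your route buys.} It never uses the discontinuities of $f$. For $0<\lambda<1$, let $\ell_\lambda$ be the line through $(-(1-\lambda),0)$ and $(0,\lambda)$, and let $H_\lambda$ be the open half-plane it bounds containing $(0,0)$. Put $f=\lambda$ on $\ell_\lambda\cap C$, $f=0$ for $y\le0$, and $f(0,1)=1$. This is a \emph{continuous} QC function on the rectangle with $[f<\lambda]_C=C\cap H_\lambda$. Your argument, verbatim, shows it has no usc QC extension.

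This is stronger than the proposition, and it conflicts with Theorem~\ref{T:ext_contQCsenza rotund} as stated, since the rectangle is bounded and so has no asymptotic directions. It therefore also conflicts with (v)$\Rightarrow$(ii) of Theorem~\ref{T:char_qc}.

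The proof of that theorem breaks at the claim $F|_C=f$, because without rotundity Lemma~\ref{L:primo} is unavailable. In this example, the functional $(x,y)\mapsto-x$ supports $B_\alpha$ at $(0,\alpha)\in\partial_C B_\alpha$. So $e(B_\alpha)\subset\{x\ge0\}$ for every $\alpha<1$, and the $F$ built there has $F(0,0)=1\ne f(0,0)$.

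Likewise, the last step of the proof of Theorem~\ref{T:nonR} only uses that $[F<\alpha_\infty]$ is open. That construction therefore already excludes usc QC extensions, not just continuous ones.
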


\begin{proof}
	Consider the function $f$ on $C$ defined by
	$$f(x,y)=\begin{cases}
		0 & \text{if}\ (x,y)\in[0,1]\times[-1,0),\\
		y & \text{if}\  (x,y)\in(0,1)\times[0,1] ,\\
		1 &  \text{otherwise.}	
	\end{cases}$$
It is clear that $[f<\alpha]_C$ is empty for $\alpha\le0$, and coincides with $C$ for $\alpha>1$.
For $\alpha\in(0,1]$, we have $[f<\alpha]_C =\bigl([0,1]\times[-1,0)\bigr)\cup \bigl((0,1)\times[0,\alpha)\bigr)$.
Consequently, all strict
sublevel sets of $f$ are convex and relatively open in $C$, proving that $f$ is QC and  upper semi-continuous. 
Now, suppose on the contrary that there exists a QC upper semi-continuous function $F\colon X\to\R$ 
such that $F|_C=f$, and consider the open convex set $U=[F<\frac12]_X$. 
Since $F(0,-1)=f(0,-1)=0$, we have
$(0,-1)\in U$, and hence there exists $\epsilon>0$ such that $(0,-1)+\epsilon B_X\subset U$. Morover,
$U$ contains the relatively open segment $(0,1)\times\{\frac13\}$. So, by convexity of $U$, it contains also $(0,0)$.
But this is a contradiction since $F(0,0)=f(0,0)=1$.
\end{proof}

On the other hand, we have Theorem~\ref{T:ext_uscQC su aperto} below
about extension of QC  upper semi-continuous functions from {\em open} convex sets whose closure has no asymptotic directions.
For its proof we will need the following observation.

\begin{observation}\label{O:interni}
Let $\{D_n\}_n$ be a sequence of open convex sets in a normed space $X$. Suppose that
$X=\bigcup_n \overline{D}_n$, and $D_n\subset D_{n+1}$ for each $n$.
Then $X=\bigcup_n D_n$.
\end{observation}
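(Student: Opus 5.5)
Let $x\in X$ be arbitrary; we must find $n$ with $x\in D_n$. The plan rests on two facts: an open convex set has the same interior as its closure (Fact~\ref{F:convex}), and the interiors of the closures $\overline{D}_n$ already cover $X$.

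First, we check that $\{\overline{D}_n\}_n$ is an increasing sequence of closed convex sets whose union is $X$. In finite dimension, the Baire category theorem shows that some $\overline{D}_m$ has nonempty interior. Since the sequence is increasing, the same holds for every $\overline{D}_n$ with $n\ge m$, and hence $D_n\ne\emptyset$ for such $n$. By Fact~\ref{F:convex}, $\mathrm{int}\,\overline{D}_n=D_n$ for these $n$, because $D_n$ is open and convex with nonempty interior. (For $n<m$ the set $D_n$ may be empty, but we simply discard those indices.)

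It therefore suffices to show that every $x\in X$ lies in $\mathrm{int}\,\overline{D}_n$ for some $n$. Fix $x$ and choose $n_0\ge m$ and a point $p\in D_{n_0}$. Set $y:=2x-p$, so that $x$ is the midpoint of $[p,y]$. Pick $k\ge n_0$ with $y\in\overline{D}_k$. Then $p\in D_{n_0}\subset D_k=\mathrm{int}\,\overline{D}_k$ and $y\in\overline{D}_k$.

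We now apply the standard fact that for a convex set $K$, if $p\in\mathrm{int}\,K$ and $y\in\overline{K}$, then $[p,y)\subset\mathrm{int}\,K$. With $K=\overline{D}_k$, this gives $x\in\mathrm{int}\,\overline{D}_k=D_k$, which proves the claim.

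If one wants to avoid Baire, one can use instead that a finite-dimensional $X$ is the union of countably many closed sets, at least one of which, when convex, has nonempty interior; either route works, since $\dim X<\infty$ is the paper's standing assumption. If $X$ is not finite-dimensional, Baire still applies when $X$ is complete. The only delicate point is securing one $n$ with $D_n\ne\emptyset$ so that Fact~\ref{F:convex} applies; the midpoint trick then does the rest.
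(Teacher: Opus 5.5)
Your argument is correct, and it differs from the paper's mainly in working inside a single $D_k$ rather than with the whole union.

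\textbf{The Baire step is unnecessary.} Since $X\neq\emptyset$ and $\overline{\emptyset}=\emptyset$, the hypothesis $X=\bigcup_n\overline{D}_n$ immediately gives some $\overline{D}_m\neq\emptyset$. Hence $D_m\neq\emptyset$, and so $D_n\neq\emptyset$ for all $n\ge m$. The ``delicate point'' you flag is therefore trivial. Once Baire is dropped, your proof works in every normed space, complete or not, which is how the statement is phrased. As written, it only covers finite-dimensional or Banach spaces.

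\textbf{Comparison with the paper.} The paper argues globally. The set $G:=\bigcup_n D_n$ is open and convex, being an increasing union. It is dense because each $\overline{D}_n\subset\overline{G}$. Fact~\ref{F:convex} then gives $G=\mathrm{int}\,G=\mathrm{int}\,\overline{G}=X$ in one line. Your reflection $y=2x-p$, combined with the segment lemma $[p,y)\subset\mathrm{int}\,K$, is essentially the standard proof of that fact, carried out inside one $D_k$. It costs a few more lines but produces an explicit index $k$ with $x\in D_k$.

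\textbf{A further shortcut.} You can apply the segment lemma directly to the open convex set $K=D_k$, using $p\in D_k$ and $y\in\overline{D}_k$. This yields $x\in D_k$ without any appeal to Fact~\ref{F:convex}.
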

\begin{proof}
The set $G:=\bigcup_n D_n$ is a dense, open convex set. By a well-known general fact,
$G=\mathrm{int}(\overline{G})=X$.
\end{proof}

\begin{theorem}\label{T:ext_uscQC su aperto}
	Let $\mathrm{dim}\,X=d\ge2$. Let $A\subsetneq X$ be a nonempty open convex set whose closure has no asymptotic directions.
	Then each upper semi-continuous QC function $f\colon A\to\R$ which is bounded on bounded sets admits an upper semi-continuous QC
	extension $F\colon X\to\R$ which is bounded on bounded sets.
\end{theorem}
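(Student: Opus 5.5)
We mimic the proof of Theorem~\ref{T:ext_contQCsenza rotund}, working with $C:=\overline{A}$, which is a convex body with no asymptotic directions. Since $f$ is only defined on $A=\mathrm{int}\,C$, we cannot take closures of strict sublevel sets inside $C$ and expect the interior relations used before. Instead, for $\alpha\in\R$ let $A_\alpha:=[f<\alpha]_A$. These sets are open (upper semicontinuity) and convex, and they increase with $\alpha$. Put $B_\alpha:=\overline{A_\alpha}$. When $A_\alpha\ne\emptyset$, $B_\alpha$ is a convex body contained in $C$, and we set $e(B_\alpha)$ as in Definition~\ref{D:e}. We then define
$$
D_\alpha:=\mathrm{int}\,e(B_\alpha),\qquad F(x)=\sup\{\alpha\in\R:\; x\notin D_\alpha\}.
$$
Since the $D_\alpha$ are open, convex and increasing (by Proposition~\ref{P:solocontenimento}, as $C$ contains no line), the standard argument from \cite[Proposition~2.5]{DEVEqc_UC} gives that $F$ is QC and upper semicontinuous, provided $F$ is finite everywhere.

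\textbf{Finiteness from above.} Let $\alpha_n\nearrow+\infty$. We have $A=\bigcup_n A_{\alpha_n}$, and since each $A_{\alpha_n}$ is open, $A=\bigcup_n\mathrm{int}\,B_{\alpha_n}$. We need $C=\bigcup_n\mathrm{int}_C B_{\alpha_n}$, equivalently condition $(*)$ of Observation~\ref{O:riempimento}. This is where boundedness of $f$ on bounded sets enters. If $|f|\le M_r$ on $A\cap rB_X$, then $A\cap rB_X\subset A_{M_r+1}$, so $C\cap rB_X\subset\overline{A\cap rB_X}\subset B_{M_r+1}$. This may require $\mathrm{int}\,C\cap r B_X\ne\emptyset$; otherwise we enlarge $r$. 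Proposition~\ref{P:riempimento} then gives $X=\bigcup_n e(B_{\alpha_n})$. Since $e(B)$ is closed convex with nonempty interior, $\overline{D_{\alpha_n}}=e(B_{\alpha_n})$, and Observation~\ref{O:interni} gives $X=\bigcup_n D_{\alpha_n}$. Hence $F<+\infty$.

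\textbf{Local boundedness.} For a ball $rB_X$ we need a single $\alpha$ with $rB_X\subset D_\alpha$. By compactness and openness of the increasing family $D_{\alpha_n}$, this holds. So $F$ is bounded above on bounded sets.

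\textbf{Finiteness from below.} If $A_\alpha$ is empty for some $\alpha$, the lower bound is trivial, since $F\ge\alpha$. If $f$ is unbounded below and $C$ is unbounded, the sets $B_{-n}$ decrease to $\emptyset$. The reason is that $f$ is bounded on bounded sets, so each $B_{-n}$ eventually misses $C\cap rB_X$ for every $r$; indeed $\overline{A_{-n}}\cap \mathrm{int}(rB_X)$ lies in the closure of a set disjoint from $A\cap rB_X$. Lemma~\ref{L:intersezione} then gives $\bigcap e(B_{-n})=\emptyset$, so $F>-\infty$. For lower boundedness on bounded sets, we reuse the proof of Lemma~\ref{L:intersezione}: $e(B_{-n})\subset[h\ge\inf h(B_{-n})]_X$ with $\inf h(B_{-n})\to\infty$, so a given ball misses $e(B_{-n})$ for large $n$. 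Thus $F\ge -n$ on that ball.

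\textbf{Extension property.} For $x\in A$, we must check $x\in D_\beta$ iff $f(x)<\beta$, up to the usual sup-argument. If $f(x)<\beta$, then $x\in A_\beta\subset\mathrm{int}\,B_\beta\subset D_\beta$. Conversely, if $x\in D_\beta\cap A$, then $x\in\mathrm{int}(e(B_\beta))\cap\mathrm{int}\,C\subset\mathrm{int}(e(B_\beta)\cap C)=\mathrm{int}\,B_\beta=A_\beta$, using Remark~\ref{R:e}. The last equality holds because $A_\beta$ is open convex, so $\mathrm{int}\,\overline{A_\beta}=A_\beta$ by Fact~\ref{F:convex}. Hence $F(x)=\sup\{\alpha: f(x)\ge\alpha\}=f(x)$.

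The main obstacle I expect is the careful handling of $e(B_\alpha)$ when $B_\alpha$ is empty or equal to $C$, together with verifying the hypotheses of Proposition~\ref{P:riempimento}. These are mostly bookkeeping once the bounded-on-bounded-sets hypothesis is used as above.
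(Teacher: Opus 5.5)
Your proposal is correct and follows essentially the same route as the paper. It uses the same sets $A_\alpha$, $B_\alpha=\overline{A_\alpha}$ and the same function $F$; it chains Observation~\ref{O:riempimento}, Proposition~\ref{P:riempimento}, Proposition~\ref{P:solocontenimento} and Observation~\ref{O:interni} to get $X=\bigcup_\alpha\mathrm{int}\,e(B_\alpha)$; and it uses Lemma~\ref{L:intersezione} for the lower bounds. The differences are cosmetic: you check $F|_A=f$ directly via $\mathrm{int}\,e(B_\beta)\cap A=A_\beta$ (Remark~\ref{R:e} and Fact~\ref{F:convex}) instead of citing the proof of \cite[Proposition~2.5]{DEVEqc_UC}, and you bound $F$ below on balls via the half-spaces $[h\ge\inf h(B_{-n})]_X$ rather than via compactness and $\bigcap_\alpha e(B_\alpha)=\emptyset$.
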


\begin{proof} We proceed in a similar way as in the proof of Theorem~\ref{T:ext_contQCsenza rotund}. 
	For each $\alpha\in\R$ define
	$$
	A_\alpha:=[f<\alpha]_A, \quad B_\alpha:=\overline{A}_\alpha.
	$$
	Then: $A_\alpha\subset A_\beta$ whenever $\alpha<\beta$; $\bigcup_{\alpha\in\R}A_\alpha=A$; and
$A_\alpha\ne\emptyset$ for each sufficiently large $\alpha$. 
It is obvious that, for every fixed $\alpha$, either $B_\alpha=\emptyset$ or $B_\alpha$ is a convex body.
Consider the convex body $C:=\overline{A}$.

Let $r>0$ be such that $r B_X$ intersects $A$. Since $f$ is bounded above on bounded sets,
we must have $\mathrm{int}(r B_X\cap C)\subset A_\alpha$ for each sufficiently large $\alpha$.
For every such $\alpha$, $r B_X\cap C=\overline{\mathrm{int}(r B_X\cap C)}\subset\overline{A}_\alpha=B_\alpha$.
So we can apply Observation~\ref{O:riempimento} and Proposition~\ref{P:riempimento} to obtain that
$X=\bigcup_{\alpha\in\R}e(B_\alpha)$. 

By monotonicity of $\alpha\mapsto B_\alpha$ and by Proposition~\ref{P:solocontenimento},  
$e(B_\alpha)\subset e(B_\beta)$ whenever
$\alpha<\beta$. Hence $X=\bigcup_{\alpha\in\R}\mathrm{int}_X[e(B_\alpha)]$ by 
Observation~\ref{O:interni}.

We claim that $\bigcap_{\alpha\in\R}e(B_\alpha)=\emptyset$. To show this, consider an arbitrary $x\in C$
and an open ball $U$ centered at $x$. Since ${\overline\alpha}:=\inf f(U\cap A)\in\R$, we have $A_{\overline\alpha}\cap U=\emptyset$.
But then also $B_{\overline\alpha}\cap U=\emptyset$. Thus $\bigcap_{\alpha\in\R}B_\alpha=\emptyset$,
and our claim follows by Lemma~\ref{L:intersezione}. Obviously, $\bigcap_{\alpha\in\R}\mathrm{int}[e(B_\alpha)]=\emptyset$.

		Now, let us define $F\colon X\to\R$ by
	$$
	F(x)=\sup\bigl\{
	\alpha\in\R: x\notin \mathrm{int}\,e(B_\alpha)
	\bigr\}.
	$$
	By the proof of \cite[Proposition~2.5]{DEVEqc_UC}, $[F<\alpha]_X\subset\mathrm{int}\, e(B_\alpha)$
for each $\alpha\in\R$, and
$F$ is an upper semi-continuous QC extension of $f$ to the whole space.
Such a function is automatically bounded above on every compact set $K\subset X$.
Moreover, since $\bigcap_{\alpha\in\R}e(B_\alpha)=\emptyset$ and the sets $e(B_\alpha)$ are closed,
we must have $e(B_\beta)\cap K=\emptyset$ for some $\beta\in\R$.
Since by  \cite[Proposition~2.5]{DEVEqc_UC}, $[F<\beta]\subset e(B_\beta)$, it follows
that $\inf F(K)\ge\beta$. The proof is complete.
\end{proof}

%%%%%%%%%%%%%%%%%%%%%%%%%%%%%%%%%%%%%%%%%%%%%%%%
\medskip

\section{Characterizations of extendability}

In the present section, we state a few characterizations of extendability of QC functions
from a finite-dimensional closed convex set in an arbitrary normed space. 
They will follow easily
by combining together main results of the previous sections, our result from Theorem~\ref{T:old}(c), 
and the following two well-known facts: 
\begin{itemize}
\item  each finite-dimensional convex set $C$ has nonempty {\em relative interior}, 
that is, the interior of $C$ in its affine hull $\mathrm{aff}\,C$;
\item each finite-dimensional subspace $Y$ of a normed space is complemented, that is,
there exists a bounded linear projection onto $Y$.
\end{itemize}

\smallskip

Let us start with two trivial observations. 

\begin{observation}\label{O:1dim}
\begin{enumerate}[(a)]
\item Let $I\subsetneq\R$ be a (possibly degenerate) nonempty closed interval. 
Let $f\colon I\to\R$ be a  %\cd{qui ho aggiunto la richiesta di continuità su $f$} 
function, and let $F\colon\R\to\R$ be the extension of $f$ 
defined to be constant on the closure of each
of the (at most two) connected components of $\R\setminus I$.
\item Let $Y$ be a finite-dimensional subspace of a normed space $X$, and let
$P\colon X\to Y$ be a bounded linear projection onto $Y$. Let $f\colon Y\to\R$,
and consider its extension $F:=f\circ P$ to the whole $X$.
\end{enumerate}
\smallskip\noindent
In any of the above two cases,
if $f$ is QC and has one of the properties
``continuous'', ``uniformly continuous'', ``Lipschitz'', then $F$ is QC and has the same
property.
\end{observation}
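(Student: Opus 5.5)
We verify the two cases separately; each reduces to checking that sublevel sets of $F$ are convex and that the regularity of $f$ passes to $F$.

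\emph{Case (a).} Write $I=[a,b]$ with $-\infty\le a\le b\le+\infty$ and $I\ne\R$. Then $F$ is given explicitly: $F(t)=f(a)$ for $t\le a$ (if $a>-\infty$), $F=f$ on $I$, and $F(t)=f(b)$ for $t\ge b$ (if $b<+\infty$).

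For quasiconvexity, recall that a function on an interval is QC exactly when each sublevel set $[F\le\alpha]_\R$ is an interval. Equivalently, $F(s)\le\max\{F(r),F(t)\}$ whenever $r<s<t$. We check this inequality by cases on where $r,s,t$ lie.
\begin{enumerate}[(i)]
\item If $r,s,t$ all lie in the same closed piece, the inequality follows from constancy on that piece or from quasiconvexity of $f$ on $I$.
\item Otherwise, replace each point lying outside $I$ by the nearer endpoint of $I$. This does not change the values of $F$ and preserves the weak order of the three points. The inequality then reduces to the one for $f$ on $I$, where degenerate coincidences are trivial.
\end{enumerate}

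For regularity, $F$ is constant on each added ray and agrees with $f$ at the junction points. Hence continuity at $a$ and $b$ is immediate. For any $s<t$, the quantity $|F(s)-F(t)|$ equals $|f(s')-f(t')|$, where $s',t'$ are the projections of $s,t$ onto $I$. The projection onto $I$ is $1$-Lipschitz, so the moduli satisfy $\omega_F\le\omega_f$. This gives both uniform continuity and the Lipschitz property, with the same constant.

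\emph{Case (b).} Set $F=f\circ P$, where $P\colon X\to Y$ is a linear projection. Then $[F\le\alpha]_X=P^{-1}([f\le\alpha]_Y)$, and the preimage of a convex set under a linear map is convex, so $F$ is QC. Also $F|_Y=f$, because $P$ is a projection onto $Y$.

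Since $P$ is bounded, $\|Px-Px'\|\le\|P\|\,\|x-x'\|$. Therefore $\omega_F(t)\le\omega_f(\|P\|t)$. This transfers the three properties: continuity by composition, uniform continuity, and the Lipschitz property with constant $\|P\|L$.

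The only step requiring a little care is the case analysis for QC in (a). It is handled cleanly by the reduction to $I$ via the nearest-point projection in step (ii).
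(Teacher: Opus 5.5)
Your proof is correct. The paper gives no proof, calling these ``trivial observations''. Your argument is exactly the routine check it leaves to the reader: in (a) you write $F$ as $f$ composed with the monotone, $1$-Lipschitz nearest-point retraction onto $I$, and in (b) you use $[F\le\alpha]_X=P^{-1}([f\le\alpha]_Y)$ together with $\omega_F(t)\le\omega_f(\|P\|t)$.
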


Recall that the {\em dimension} of a convex set is defined as the dimension of its affine hull.

\begin{corollary}[Trivial cases]\label{C:trivial}
Let $C$ be a finite-dimensional
closed set in an arbitrary normed space $X$, and let $f\colon C\to\R$
be a QC function. If $C$ is either affine or at most one-dimensional, then
there exists a QC extension $F\colon X\to\R$ that shares any of the following
properties satistied by $f$: ``continuous'', ``uniformly continuous'', ``Lipschitz''.
\end{corollary}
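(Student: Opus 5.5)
The plan is to split into the two cases named in the statement and, in each, build the extension from the two devices of Observation~\ref{O:1dim}: first extend inside the affine hull of $C$, then push the result to all of $X$ through a bounded linear projection. By a translation (which preserves quasiconvexity, continuity, uniform continuity and Lipschitz constants) I will assume $0\in C$. Then $Y:=\mathrm{aff}\,C=\mathrm{span}\,C$ is a finite-dimensional subspace of $X$, so there is a bounded linear projection $P\colon X\to Y$.

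\emph{Case 1: $C$ is affine.} Here $C=Y$, and I take $F:=f\circ P$. It is defined on all of $X$ and agrees with $f$ on $C$ because $P|_Y=\mathrm{id}$. Observation~\ref{O:1dim}(b) then gives directly that $F$ is QC and inherits whichever of the three regularity properties $f$ has.

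\emph{Case 2: $\dim C\le 1$ and $C$ not affine.} The case $\dim C=0$ makes $C$ a point, which is affine and so already covered. Otherwise $\dim Y=1$. Fixing a nonzero $e\in Y$, the map $t\mapsto te$ is a linear isometry up to the constant $\|e\|$, and it identifies $Y$ with $\R$ and $C$ with a closed interval $I\subsetneq\R$. The interval is proper because $C$ is not affine, and it is closed because $C$ is closed. Under this identification, Observation~\ref{O:1dim}(a) extends $f$ to a function $g\colon Y\to\R$ that is constant on the closure of each component of $Y\setminus C$, and $g$ keeps the QC property and the relevant regularity. Finally I set $F:=g\circ P$ and apply Observation~\ref{O:1dim}(b) once more. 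Since both steps preserve each of the three properties, $F$ is the required extension.

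I do not expect a real obstacle. The one point to check is that Observation~\ref{O:1dim}(a) actually yields a QC function whenever $f$ is QC: the constant value on each outer component equals $f$ at the adjacent endpoint of $I$. Each sublevel set of $f$ on $I$ is an interval. If that interval contains an endpoint of $I$, the constant extension lengthens it by the adjacent outer ray; if it does not, it is unchanged. In either case it remains an interval, so $g$ is QC. Continuity, uniform continuity and the Lipschitz constant are preserved because gluing a constant piece at a common endpoint does not increase the modulus of continuity. The scaling by $\|e\|$ changes Lipschitz constants only by a fixed factor, which is harmless.
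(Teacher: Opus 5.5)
Your proposal is correct and follows the paper's proof. In the affine case you translate so that $C$ is a subspace and apply Observation~\ref{O:1dim}(b); in the one-dimensional case you first extend to the line $\mathrm{aff}\,C$ via Observation~\ref{O:1dim}(a) and then to $X$ via Observation~\ref{O:1dim}(b). Your added check that the constant-on-components extension keeps sublevel sets as intervals and does not increase the modulus of continuity fills in what the paper leaves as trivial.
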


\begin{proof}
If $C$ is affine, assume that $C$ is a subspace and apply Observation~\ref{O:1dim}(b).
If $C$ is at most one-dimensional, first use Observation~\ref{O:1dim}(a) to extend $f$ 
to a line (one-dimensional affine set) containing $C$, 
and then extend further to $X$ by using Observation~\ref{O:1dim}(b).
\end{proof}

\smallskip

For brevity of formulations we shall use the following terminology.

\begin{definition}
Let
$\mathcal{P}_1$ and $\mathcal{P}_2$ be two continuity-type properties of functions.
Let $C$ be a closed convex set in a normed space $X$.
Then the assertion
$$\text{\em ``$\mathcal{P}_1$ QC extends to $\mathcal{P}_2$ QC''}$$
means that each $\mathcal{P}_1$ QC function $f\colon C\to\R$ admits
a $\mathcal{P}_2$ QC extension $F\colon X\to\R$.
\end{definition}

\medskip

\begin{remark}[Lipschitz QC extensions]
Let $X$ be an arbitrary normed space and $C\subset X$ a (not necessarily finite-dimensional) 
closed convex set of dimension at least two.
\begin{enumerate}[(a)]
\item By our Theorem~\ref{no_lipQC}, {\em if $C$ is not affine and 
has nonempty relative interior
then the extendability property ``Lipschitz QC extends to Lipschitz QC'' fails for $C$.}
(Indeed, we can assume that $0\in C$; then $C$ is a convex body in the normed space
$Y=\mathrm{aff}\,C$.)
This means that a ``quasiconvex version'' of Theorem~\ref{T:McS} fails in a quite strong way.
\item On the other hand, {\em if $C$ is an affine set then the 
extendability property ``Lipschitz QC extends to Lipschitz QC'' holds
for $C$ even with preserving the Lipschitz constant.} (Indeed, we can suppose that
$C$ is a [not necessarily complemented] subspace, and then apply our previous result \cite[Theorem~3.4]{DEVEqc_UC}.)
\end{enumerate}
\end{remark}

\medskip

The following three theorems provide geometric characterizations
of finite-dimensional closed convex sets from which every Lipschitz QC function can be extended
to a QC function on the whole space with some weaker than Lipschitz
continuity property. Moreover, it turns out that
such extendability properties remain valid if we substitute 
``Lipschitz'' with ``uniformly continuous'' in the first one, and with
``continuous'' in the second and third one. Of course, we consider
only the cases which are not covered by Corollary~\ref{C:trivial}.

We shall say that a closed convex set $C$ is {\em relatively rotund} if it is rotund
in its affine hull (that is, if $x,y\in C$ are two distinct points then 
$\frac{x+y}2$ belongs to the relative interior of $C$).

\begin{theorem}[Uniformly continuous QC extensions]\label{T:char_uc}
Let $X$ be an arbitrary normed space, and
let $C\subset X$ be a finite-dimensional, closed convex, non-affine set of dimension at least two. 
For extendability from $C$ to $X$, the following assertions are equivalent.
\begin{enumerate}[(i)]
\item ``Uniformly continuous QC extends to uniformly continuous QC''.
\item ``Lipschitz QC extends to uniformly continuous QC''.
\item $C$ is bounded and relatively rotund.
\end{enumerate}
\end{theorem}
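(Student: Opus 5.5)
The plan is to prove (iii)$\Rightarrow$(i)$\Rightarrow$(ii)$\Rightarrow$(iii). The middle implication is trivial, because every Lipschitz function is uniformly continuous. Throughout, we reduce to the full-dimensional situation. We may assume $0\in C$ and put $Y=\mathrm{aff}\,C=\mathrm{span}\,C$, a finite-dimensional subspace with $\dim Y\ge2$. Then $C$ is a convex body in $Y$: it is closed, has nonempty relative interior, and is proper because $C$ is not affine. Fix a bounded linear projection $P\colon X\to Y$, which exists since $Y$ is finite-dimensional.

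For (iii)$\Rightarrow$(i), let $C$ be bounded and relatively rotund. By Remark~\ref{R:modulus}(b) (a compactness argument), $C$ is then a uniformly rotund convex body in $Y$. Given a uniformly continuous QC $f\colon C\to\R$, Theorem~\ref{T:old}(c), applied inside $Y$, yields a uniformly continuous QC extension $G\colon Y\to\R$. Observation~\ref{O:1dim}(b) then shows that $F:=G\circ P$ is a uniformly continuous QC extension of $f$ to $X$.

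For (ii)$\Rightarrow$(iii) we argue by contraposition. Suppose (iii) fails. The key reduction is this: if $f\colon C\to\R$ is Lipschitz QC and has no uniformly continuous QC extension to $Y$, then it has none to $X$ either, because the restriction to $Y$ of an extension to $X$ is an extension to $Y$ with the same properties. So it suffices to work in $Y$ with the convex body $C$, and there are three cases.
\begin{enumerate}[(1)]
\item \emph{$C$ is not rotund in $Y$.} Theorem~\ref{T:nonR} gives a Lipschitz QC function with no continuous QC extension, hence no uniformly continuous one.
\item \emph{$C$ is rotund in $Y$ but unbounded, and has an asymptotic direction.} Theorem~\ref{noQC} gives a Lipschitz QC function with no QC extension at all.
\item \emph{$C$ is rotund and unbounded, with no asymptotic directions.} Theorem~\ref{no_ucQC} gives exactly the required Lipschitz QC function with no uniformly continuous QC extension.
\end{enumerate}

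The main point to check is that these cases are exhaustive. Failure of (iii) means $C$ is either unbounded or not relatively rotund. A non-rotund $C$ falls into case (1), whether or not it is bounded. A rotund $C$ must then be unbounded, and it falls into case (2) or case (3) according to whether it has an asymptotic direction. We must also confirm that the cited theorems are stated for convex bodies in a finite-dimensional normed space of dimension at least $2$. This holds for $Y$ with its induced norm, since the statements are insensitive to the choice of norm on a finite-dimensional space.
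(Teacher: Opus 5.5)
Your proof is correct and follows the paper's argument essentially verbatim. You reduce to $C$ being a convex body in $Y=\mathrm{aff}\,C$, obtain (iii)$\Rightarrow$(i) from Remark~\ref{R:modulus}(b) and Theorem~\ref{T:old}(c) composed with a bounded projection, and refute (ii) with Theorems~\ref{T:nonR}, \ref{noQC} and \ref{no_ucQC} in the same three cases. Your explicit remark that a function with no good extension to $Y$ has none to $X$ either (restrict any extension to $Y$) usefully spells out the half of the reduction that the paper leaves implicit.
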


\begin{proof}
First, we can assume that $C$ contains the origin, and 
by Observation~\ref{O:1dim}(b), we can assume that $\mathrm{aff}\,C=X$
and hence $C$ is a convex body in $X$. Now, 
$(i)\Rightarrow(ii)$ is obvious. Let us prove the implication $(ii)\Rightarrow(iii)$.
Assume that $(iii)$ is false. If $C$ is not rotund, $(ii)$ is false by Theorem~\ref{T:nonR}.
If $C$ is rotund and unbounded, $(ii)$ is false by Theorems~\ref{noQC}~and~\ref{no_ucQC}.
Finally, if $(iii)$ holds then $C$ is uniformly rotund (Remark~\ref{R:modulus}) 
and hence $(i)$ holds by Theorem~\ref{T:old}(c).
\end{proof}

\smallskip

\begin{theorem}[Continuous QC extensions]\label{T:char_c}
Let $X$ be an arbitrary normed space, and
let $C\subset X$ be a finite-dimensional, closed convex, non-affine set of dimension at least two. 
For extendability from $C$ to $X$, the following assertions are equivalent.
\begin{enumerate}[(i)]
\item ``Continuous QC extends to continuous QC''.
\item ``Lipschitz QC extends to continuous QC''.
\item $C$ is rotund and has no asymptotic directions.
\end{enumerate}
\end{theorem}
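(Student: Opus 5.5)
First reduce to the case of a convex body, exactly as in the proof of Theorem~\ref{T:char_uc}. Translate so that $0\in C$, and let $Y=\mathrm{aff}\,C$, a finite-dimensional subspace of dimension at least two. Choose a bounded linear projection $P\colon X\to Y$. By Observation~\ref{O:1dim}(b), any QC extension $G\colon Y\to\R$ of $f$ that is continuous (or Lipschitz, etc.) yields the extension $G\circ P$ on $X$ with the same property. Conversely, a QC extension to $X$ restricts to a QC extension to $Y$ with the same property. Hence it suffices to treat the case $X=Y$ finite-dimensional. In that case $C$ is a convex body, since it has nonempty interior and is not affine, so $C\neq X$. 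The notions ``rotund'' and ``no asymptotic directions'' are understood relative to $\mathrm{aff}\,C$, and under this reduction they become the notions used in Sections~3--7.

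The implication $(i)\Rightarrow(ii)$ is immediate, because Lipschitz functions are continuous. For $(ii)\Rightarrow(iii)$ we argue by contraposition. If $C$ is not rotund, Theorem~\ref{T:nonR} gives a Lipschitz QC function on $C$ with no continuous QC extension. If $C$ has an asymptotic direction (so in particular $C$ is unbounded), Theorem~\ref{noQC} gives a Lipschitz QC function with no QC extension at all. In both cases $(ii)$ fails.

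For $(iii)\Rightarrow(i)$ we split according to boundedness of $C$.

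\emph{Unbounded case.} Here $C$ is an unbounded, rotund convex body with no asymptotic directions, which is exactly the hypothesis of Theorem~\ref{T:ext_contQC}. That theorem gives a continuous QC extension of every continuous QC function on $C$.

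\emph{Bounded case.} Here $C$ is a compact rotund body, hence uniformly rotund by Remark~\ref{R:modulus}(b). Every continuous function on the compact set $C$ is uniformly continuous, so Theorem~\ref{T:old}(c) produces a uniformly continuous, in particular continuous, QC extension. (A bounded $C$ trivially has no asymptotic directions.)

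The only delicate point I expect is checking that the geometric hypotheses and the extension properties pass correctly through the reduction to $\mathrm{aff}\,C$. Asymptotic directions for $C\subset Y$ must be taken within $Y$, and the restriction and composition arguments above handle this. Everything else is a direct combination of the previous theorems.
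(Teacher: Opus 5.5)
Your proposal is correct and follows the paper's proof. You reduce to a convex body in $\mathrm{aff}\,C$, use Theorems~\ref{T:nonR} and~\ref{noQC} for $(ii)\Rightarrow(iii)$, and use Theorem~\ref{T:ext_contQC} for $(iii)\Rightarrow(i)$. Your separate treatment of bounded $C$ is actually a needed addition: you get uniform rotundity from Remark~\ref{R:modulus}(b) and uniform continuity on the compact set $C$, then apply Theorem~\ref{T:old}(c). Theorem~\ref{T:ext_contQC} is stated only for unbounded bodies, and the paper's proof cites it here without addressing the bounded case.
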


\begin{proof}
As in the previous proof, we can assume that $C$ is a convex body in $X$.
Now, $(i)\Rightarrow(ii)$ is obvious. Let us prove the implication $(ii)\Rightarrow(iii)$.
Assume that $(iii)$ is false. Then $(ii)$ is false by theorems \ref{noQC}
and \ref{T:nonR}.
Finally, the implication $(iii)\Rightarrow(i)$ holds by Theorem~\ref{T:ext_contQC}.
\end{proof}

\smallskip

\begin{theorem}[Upper semicontinuous QC extensions]\label{T:char_qc}
Let $X$ be an arbitrary normed space, and
let $C\subset X$ be a finite-dimensional, closed convex, non-affine set of dimension at least two. 
For extendability from $C$ to $X$, the following assertions are equivalent.
\begin{enumerate}[(i)]
\item ``Continuous QC extends to (just) QC''.
\item ``Continuous QC extends to upper semicontinuous QC''.
\item ``Lipschitz QC extends to (just) QC''.
\item ``Lipschitz QC extends to upper semicontinuous QC''.
\item $C$ has no asymptotic directions.
\end{enumerate}
\end{theorem}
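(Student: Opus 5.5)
We first reduce, as in the proofs of Theorems~\ref{T:char_uc} and~\ref{T:char_c}, to the case where $C$ is a convex body in $X$. After translating we may assume $0\in C$, so that $Y:=\mathrm{aff}\,C$ is a finite-dimensional subspace of dimension at least two. Fix a bounded linear projection $P\colon X\to Y$. If $f\colon C\to\R$ admits a QC extension $G\colon Y\to\R$ that is upper semicontinuous (or merely QC), then $G\circ P$ is a QC extension to $X$ with the same property. This follows from Observation~\ref{O:1dim}(b): its argument works verbatim for upper semicontinuity and for plain quasiconvexity, since $[G\circ P<\alpha]_X=P^{-1}([G<\alpha]_Y)$. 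Conversely, restricting an extension on $X$ to $Y$ preserves quasiconvexity and upper semicontinuity. Hence every property (i)--(iv) for $C\subset X$ is equivalent to the same property for $C\subset Y$. The notion of asymptotic direction is also intrinsic to $Y$: condition (v) refers to $\mathrm{int}\,C$ in $Y$, and its distance condition only involves points of $C$, so a direction $v$ is asymptotic for $C$ in $X$ exactly when it is asymptotic for $C$ in $Y$. We will say that explicitly. Since $C$ is non-affine, it is a proper subset of $Y$, so $C$ is a convex body in $Y$ with $\dim Y\ge2$.

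The chain of implications is then short. The implications (ii)$\Rightarrow$(i), (iv)$\Rightarrow$(iii), (i)$\Rightarrow$(iii) and (ii)$\Rightarrow$(iv) are trivial, because every Lipschitz function is continuous and every upper semicontinuous QC function is QC. It therefore suffices to prove (iii)$\Rightarrow$(v) and (v)$\Rightarrow$(ii).

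For (iii)$\Rightarrow$(v) we argue by contraposition. If $C$ has an asymptotic direction, then $C$ is necessarily unbounded, since a bounded set cannot satisfy \eqref{D:ad} with $d(x_0+tv,C)\to0$. Theorem~\ref{noQC} then provides a Lipschitz QC function on $C$ that has no QC extension whatsoever, so (iii) fails.

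For (v)$\Rightarrow$(ii) we apply Theorem~\ref{T:ext_contQCsenza rotund} directly. It gives an upper semicontinuous QC extension of every continuous QC function on a convex body without asymptotic directions, and its proof through Propositions~\ref{P:riempimento} and~\ref{P:solocontenimento} covers both the bounded and the unbounded case.

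The only point requiring care is the reduction step: checking that lifting by $G\circ P$ preserves the relevant properties, and that ``no asymptotic directions'' passes correctly between $X$ and $\mathrm{aff}\,C$. Both are routine. Everything else is a direct assembly of earlier results.
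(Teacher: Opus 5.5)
Your proof follows the paper's proof step for step: the reduction to a convex body in $\mathrm{aff}\,C$, the trivial implications, (iii)$\Rightarrow$(v) by Theorem~\ref{noQC}, and (v)$\Rightarrow$(ii) by Theorem~\ref{T:ext_contQCsenza rotund}. The first three steps are fine. The last one is a genuine gap, which the paper shares: Theorem~\ref{T:ext_contQCsenza rotund} is false for non-rotund bodies, and in fact (v) does not even imply (iv).

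\emph{Counterexample.} Let $C=[0,1]\times[-1,1]\subset\R^2$; it is bounded, so it has no asymptotic directions. Put $R=\sqrt{v^2+4u}$ and $\sigma=\frac12(R-v)$, the nonnegative root of $\sigma^2+v\sigma=u$, and let $f=-\sigma^2$ on $C$. One checks that:
\begin{enumerate}[(a)]
\item $[f<-s^2]_C=\{(u,v)\in C:\ v<u/s-s\}$ for $s>0$;
\item $[f<0]_C=C\setminus(\{0\}\times[0,1])$;
\item $[f<t]_C=C$ for $t>0$.
\end{enumerate}
All these sets are convex, so $f$ is QC. On $\{u>0\}$ one has $\partial_u f=-2\sigma/R$ and $\partial_v f=2\sigma^2/R$, with $0\le\sigma\le R\le\sqrt5$, so $f$ is Lipschitz.

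Suppose $F$ were an upper semicontinuous QC extension of $f$ to $\R^2$. Then $U=[F<0]_{\R^2}$ is open and convex. It contains $(0,-\frac12)$, where $f=-\frac14$, hence also some $(-\epsilon,-\frac12)$; and it contains $(\epsilon,\frac12)$. So it contains their midpoint $(0,0)$, contradicting $F(0,0)=f(0,0)=0$. Thus (ii) and (iv) fail for the square. On the other hand, (i) and (iii) hold there trivially: extend a bounded $f$ by the constant $\sup f$ off $C$. So the stated equivalence is false and cannot be proved as written.

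\emph{Where the cited proof breaks.} The issue is rotundity, not boundedness versus unboundedness, which is what you checked. The function $F(x)=\sup\{\alpha: x\notin\mathrm{int}\,e(B_\alpha)\}$ agrees with $f$ on $\mathrm{int}\,C$. At points of $\partial C$, however, one needs $[f<\alpha]_C\subset\mathrm{int}\,e(B_\alpha)$, which is Lemma~\ref{L:primo} and uses rotundity. Already for $f(u,v)=v$ on the square one has $e(B_\alpha)=[0,1]\times(-\infty,\alpha]$ for $-1<\alpha<1$, so $F(0,0)=1\ne f(0,0)$.

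\emph{What your argument does establish.} It gives (ii)$\Rightarrow$(i)$\Rightarrow$(iii)$\Rightarrow$(v) and (ii)$\Rightarrow$(iv)$\Rightarrow$(iii). It also gives (v)$\Rightarrow$(ii) when $C$ is additionally rotund: use Theorem~\ref{T:old}(c) in the bounded case and Theorem~\ref{T:ext_contQC} in the unbounded one. Finally, (v)$\Rightarrow$(i) holds when $C$ is bounded. For non-rotund $C$ the statement must be corrected rather than proved. In particular, for unbounded non-rotund $C$ without asymptotic directions, even (v)$\Rightarrow$(i) is left unproved, since the only route offered goes through Theorem~\ref{T:ext_contQCsenza rotund}.
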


\begin{proof}
Again, we can assume that $C$ is a convex body in $X$.
Now, the implications $(ii)\Rightarrow(i)\Rightarrow(iii)$ and $(ii)\Rightarrow(iv)\Rightarrow(iii)$ are obvious. To show
$(iii)\Rightarrow(v)$, assume that $(v)$ is false and apply Theorem~\ref{noQC}.
Finally, the implication $(v)\Rightarrow(ii)$ holds by Theorem~\ref{T:ext_contQCsenza rotund}.
\end{proof}

\smallskip

Finally, let us recall that our Proposition~\ref{non_uscQC} shows
that the extension property ``upper semicontinuous QC extends to upper semicontinuous QC''
fails e.g.\ for a rectangle in $\R^2$.  

\bigskip

\paragraph*{\bf Acknowledgement}
The research of the first author was supported by the 
INdAM--GNAMPA, and by MICINN project PID2020-112491GB-I00 (Spain).
The research of the second author was supported by the INdAM--GNAMPA, and
by the University of Milan, Research Support Plan PSR 2024.

%%%%%%%%%%%%%%%%%%%%%%%%%%%%%%%%%%%%%%%%%%%%%%%%%%%%%%


\bigskip



\begin{thebibliography}{WW}


\bibitem{BFitzV} K.J.~Arrow and A.C.~Enthoven,
{\em Quasi-concave programming},
Econometrica \textbf{29} (1961), 779--800.

\bibitem{gencon} M.~Avriel, W.E.~Diewert, S.~Schaible, and I.~Zang,
{\em Generalized concavity},
Classics in Applied Mathematics, No. 63, reprint of the 1988 original, Society for Industrial and Applied Mathematics (SIAM), Philadelphia, 2010.

\bibitem{BalRep} M.V.~Balashov and D.~Repov\v{s}, 
{\em Uniform convexity and the splitting problem for selections}, 
J. Math. Anal. Appl. \textbf{360} (2009), 307--316.


\bibitem{DEVEqc_infdim}  C.A.~De Bernardi and L.~Vesel\'y,
{\em Extendability of continuous quasiconvex functions from subspaces},
J. Math. Anal. Appl.  \textbf{526}  (2023),   Paper No. 127277, 12 pp.

\bibitem{DEVEqc_examples} C.A.~De Bernardi and L.~Vesel\'y,
{\em Rotundity properties, and non-extendability of Lipschitz quasiconvex functions},
J. Convex Anal.  \textbf{30}  (2023),   329--342.

\bibitem{DEVEqc_UC} C.A.~De Bernardi and L.~Vesel\'y,
{\em On extension of uniformly continuous quasiconvex fucntions}, Proc. Amer. Math. Soc.  \textbf{151}  (2023),   1705--1716.



\bibitem{DEVE_moduli}  C.A.~De Bernardi and L.~Vesel\'y,
{\em Moduli of uniform convexity for convex sets.},  Arch. Math. \textbf{123} (2024),  413--422. 

\bibitem{DEVE_loc_moduli}  C.A.~De Bernardi and L.~Vesel\'y,
{\em Moduli of local uniform rotundity for convex bodies}, Can. Math. Bull. \textbf{68} (2025),
1251--1261.  




\bibitem{GaleKlee} D.~Gale and V.~Klee,
{\em Continuous convex sets},
Math. Scand. \textbf{7} (1959), 379--391.

\bibitem{Klee} V.~Klee,
{\em Asymptotes and projections of convex sets},
Math. Scand. \textbf{8} (1960), 356--362.

\bibitem{McShane} E.J.~McShane,
{\em Extension of range of functions},
Bull. Amer. Math. Soc. \textbf{40} (1934), 837--842.



\bibitem{pierskalla} H.~Greenberg and W.~Pierskalla,
{\em A Review of Quasi-Convex Functions},
Operations Research \textbf{19} (1971), 1553--1570.



\bibitem{LT-II} J.~Lindenstrauss and L.~Tzafriri,
{\em Classical Banach Spaces II. Function Spaces},
Ergebnisse der Mathematik und ihrer Grenzgebiete [Results in Mathematics and Related Areas], 97, Springer-Verlag, Berlin-New York, 1979.

\bibitem{Nordlander} G.~Nordlander,
{\em The modulus of convexity in normed linear spaces},
Ark. Mat. \textbf{4} (1960), 15--17.

\bibitem{penot} J.-P.~Penot,
{\em What is quasiconvex analysis?},
Optimization \textbf{47} (2000), 35--110.

\bibitem{Polyak} B.T.~Polyak,
{\em Existence theorems and convergence of minimizing sequences in extremum problems with restrictions},
Soviet Math. \textbf{7} (1966), 72--75.

\bibitem{Rock} T.R.~Rockafellar,
{\em Convex Analysis},
Princeton Mathematical Series,  Princeton University Press, Princeton, NJ,  1970.


\end{thebibliography}
\end{document}